\newcommand{\svsp}[1]{ } 
\numberwithin{equation}{section}
\DeclareMathAlphabet{\mathitbf}{OT1}{cmr}{bx}{it}
\DeclareMathAlphabet{\mathsfbf}{OT1}{cmss}{bx}{n}
\definecolor{pptbl}{RGB}{79,129,189}
\definecolor{light-gray}{gray}{0.90}
\definecolor{med-gray}{gray}{0.70}
  \def \begrm{ \begin{description} \item[{\mathbf Remark:\hspace{2.5em} }]  }
  \def \endrm{ 
		  \end{description} }
    \def \be { \begin{equation}   }
    \def \ee { \end{equation}  }
    \def \bea { \begin{equation}   \begin{array}{ c l}
	       }
    \def \eea { \end{array} \end{equation}  }
    \def \bma { \[ \begin{array}{ c l}
	       }
    \def \ema {    \end{array}  \] }
\newcommand{\Nosoln}[2] {}
\newcommand{\hs }[1]{ \h_{\mathrm{#1}} } 
    \def \begpf{ \begin{description} \item[\mbox{$\mathbf Proof:$\hspace{2.5em} }]  }
    \def \endpf{ \begin{flushleft} $\Box $ \end{flushleft}   \end{description} }
    \def \begrm{ \begin{description} \item[{\mathbf Remark:\hspace{2.5em} }]  }
    \def \endrm{ 
		  \end{description} }
    \def \be { \begin{equation}   }
    \def \ee { \end{equation}  }
    \def \bma { \[ \begin{array}{ c l}
	       }
    \def \ema {    \end{array}  \] }
\DeclareMathOperator{\Hom}{Hom}
\DeclareMathOperator{\id}{id}
\DeclareMathOperator*{\argmin}{arg\,min}
\newcommand{\dist}{d}
\DeclareMathOperator{\proj}{proj}
\DeclareMathOperator*{\colim}{colim~}
\newcommand{\exc}{\mathcal E}
\newcommand{\infexc}{\exc^{\infty}}
\newcommand{\im}{\mathrm{im}}
\newcommand{\conley}{\mathcal Ch}
\newcommand{\conleyet}{\conley^{\varepsilon,T}}
\newcommand{\src}{\mathfrak{s}}
\newcommand{\tgt}{\mathfrak{t}}
\newcommand{\graph}[1]{{{#1}}}
\newcommand{\init}[1]{{#1}_{\mathbf i}}
\newcommand{\fin}[1]{{#1}_{\mathbf f}}
\newcommand{\one}{\bm{1}}
\newcommand{\stp}{\mathrm{stop}}
\newcommand{\catname}[1]{\bm{\mathrm{#1}}}
\newcommand{\sset}{\catname{S}}
\renewcommand{\hs}{\catname{H}}
\newcommand{\hempty}{\hs_\emptyset}
\newcommand{\dirhs}{\catname{DH}}
\newcommand{\hsdn}{\hs_{D\!N}}
\newcommand{\mfld}{\catname{M}}
\newcommand{\hsem}{\hs_{\mathrm{em}}}
\newcommand{\cs}{\catname{D}}
\newcommand{\dgraph}{\catname{G}}
\newcommand{\sets}{\catname{Set}}
\newcommand{\tops}{\catname{Top}}
\newcommand{\hop}{\mathrm{hop}}
\theoremstyle{plain}
\newtheorem{thm}{Theorem}[section]
\newtheorem{prop}[thm]{Proposition}
\newtheorem{lemma}[thm]{Lemma}
\newtheorem{cor}[thm]{Corollary}
\theoremstyle{definition}
\newtheorem{defn}[thm]{Definition}
\newtheorem{remark}[thm]{Remark}
\newtheorem{example}[thm]{Example}
\tikzset{endpoint/.style={circle,fill=black,draw,minimum size=4pt,inner sep=0pt},
guard/.style={circle,fill=red,draw,minimum size=4pt,inner sep=0pt},
mid arrows/.style={postaction={decorate,decoration={
        markings,
        mark=at position .25 with {\arrow[#1]{stealth}},
        mark=at position .75 with {\arrow[#1]{stealth}}
      }}},
directed arrow/.style={decorate,decoration={snake, segment length=6mm}}
}
\author[J. Culbertson]{Jared Culbertson}
\address{Autonomy Capabilities Team, Air Force Research Laboratory, Dayton, OH 45433, USA}
\email{jared.culbertson@us.af.mil}
\author[P. Gustafson]{Paul Gustafson}
\address{Department of Electrical Engineering, Wright State University, Dayton, OH 45435, USA}
\email{paul.gustafson@wright.edu}
\author[D.E. Koditschek]{Daniel E. Koditschek}
\address{School of Engineering and Applied Science, University of Pennsylvania, Philadelphia, PA 19104, USA}
\email{kod@seas.upenn.edu}
\author[P.F. Stiller]{Peter F. Stiller}
\address{Department of Mathematics, Texas A\&M University, College Station, TX, 77840, USA}
\email{stiller@math.tamu.edu}
\begin{document}

\title[Formal composition of hybrid systems]{Formal composition of hybrid systems}

\maketitle

\begin{abstract}
We develop a compositional framework for formal synthesis of hybrid systems using the language of category theory.  More specifically, we provide mutually compatible tools for hierarchical, sequential, and independent parallel composition.    In our framework, hierarchies of hybrid systems correspond to template-anchor pairs, which we model as spans of subdividing and embedding semiconjugacies.  Hierarchical composition of template-anchor pairs corresponds to the composition of spans via fiber product.  To model sequential composition, we introduce ``directed hybrid systems,'' each of which flows from an initial subsystem to a final subsystem in a Conley-theoretic sense.  Sequential composition of directed systems is given by a pushout of graph embeddings, rewriting the continuous dynamics of the overlapping subsystem to prioritize the second directed system. Independent parallel composition corresponds to a categorical product with respect to semiconjugacy.  To formalize the compatibility of these three types of composition, we construct a vertically cartesian double category of hybrid systems where the vertical morphisms are semiconjugacies, and the horizontal morphisms are directed hybrid systems.
\end{abstract}

\section{Introduction}

We aim to construct a physically-grounded compositional framework for hybrid system synthesis, particularly targeted at applications in robotics. Compositionality lies at the heart of language in general \cite{werning2012oxford} and its formalization underlies much of computer science in particular \cite{Leeuwen_1990}. However the behavioral modularization of physical  synthesis for digital computing that arguably ushered in the information technology revolution \cite{Mead_Conway_1980} has proven much harder to achieve in analog computing technology \cite{Mead_1989}. There are fundamental reasons for this challenge to become more severe in machines  intended to perform mechanical work on their environments \cite{Whitney_1996}.

\subsection{Motivation}
Our formalism is motivated by three distinct notions of behavioral composition that have emerged over the past thirty years in the robotics literature: sequential, hierarchical and parallel. We are specifically focused on versions of these constructions championed by the third author and collaborators that afford simultaneously robust physical realization in working robots as well as  formal proofs of correctness relative to empirically effective mathematical models of the component hardware. {\em Sequential composition} \cite{Burridge_Rizzi_Koditschek_1999} formalizes (for systems undergoing energetic exchange with their environment) notions of ``pre-image back-chaining" \cite{Lozano-Perez_Mason_Taylor_1984} originating in some of the earliest AI planning literature \cite{Fikes_Nilsson_1971}.  Such constructions have earned wide attention in robotics   \cite[Ch. 8.5]{LaValle_2006} as they correspond to broadly useful event-based concatenation of behaviors over time: follow one control policy until reaching an appropriately guarded state, then follow another. Our notion of { \em hierarchical composition} has a still  older pedigree, based on the folklore dynamical systems ``collapse of dimension" concept so deeply engrained in the literature as to appear in even the most elementary texts \cite[Ch.3.5]{Strogatz_1994}. Addressing the longstanding ``degrees of freedom" problem \cite{Bernstein_1967} in such terms at once affords an organizational framework for analysis of animal motor activity \cite{Full_Koditschek_1999} and synthesis of robot controllers \cite{Rizzi_Koditschek_1994,Saranli_Schwind_Koditschek_1998}, earning the notion an enduring following in both neuromechanics \cite{Nishikawa_Biewener_Aerts_Ahn_Chiel_Daley_Daniel_Full_Hale_Hedrick_et_al_2007} and robotics \cite{Yang_Bellingham_Dupont_Fischer_Floridi_Full_Jacobstein_Kumar_McNutt_Merrifield_et_al_2018}. Finally, { \em parallel composition}, simultaneous operation of distinct behaviors in the same body, while evidently useful, has only relatively  recently been achieved in an empirically reliable form relevant to highly energetic mechanisms \cite{Raibert_1986} (with a corresponding mathematical theory only now beginning to emerge \cite{De_Koditschek_2015,De_Koditschek_2018}), essentially due to the challenges of circumventing destabilizing ``cross-talk'' outlined in \cite{Whitney_1996}.  

Because robotics applications inevitably incur sudden transitions between dynamics and state spaces consequent upon the making and breaking of different contacts with different portions of the environment, our constructions must depart from classical  theory to embrace a notion of hybrid dynamical systems. We are primarily interested in a formalism focused on  {\em  non-blocking }\cite[Def.III.1]{lygeros2003dynamical} and { \em deterministic} \cite[Def.III.2]{lygeros2003dynamical} executions --- the hybrid version of existence and uniqueness properties familiar from classical dynamical systems theory. Hence, our constructions are roughly guided by a simple and  reasonably physically realistic model of robot mechanics that assures these properties \cite{Johnson_Burden_Koditschek_2016}.  
\subsection{Contributions}
We formalize a useful subset of each of these three motivating notions using category theory, which is particularly well-suited for describing composition and abstraction.  More importantly, the Curry--Howard correspondence provides a pathway for translating the categorical results developed in this paper to the setting of functional programming.  Although we will not explore connections to robotic behavioral programming in the current paper, our longer-term ambitions also involve developing more physically grounded analogues of existing formal approaches to motion planning using linear-time temporal logic \cite{kress2009temporal} and functional reactive programming \cite{hudak2002arrows}.  Similarly, we do not address the construction of exponential objects for hybrid systems categories, a fundamentally important step in constructing a full-fledged functional programming language for hybrid systems requiring more careful consideration. Indeed, in the smooth case the search for ``convenient'' cartesian closed categories of generalized manifolds has been a central line of research in synthetic differential geometry \cite{moerdijk2013models, kock2006synthetic}.    As a first step in this long journey, the main contribution of this paper is the investigation of a series of categories of hybrid systems that are both mathematically rigorous and also faithful to the way that robotics engineers often approach model development. Our investigation culminates in the construction of a double category of hybrid systems supporting forms of hierarchical, sequential, and independent parallel composition.

We were heavily influenced by the recent preprint~\cite{lerman} and the elegant definitions therein of a category of hybrid systems and semiconjugacies---loosely, execution-preserving maps. To accommodate examples from robotics, we develop a modified definition of an automaton-based hybrid system where, instead of manifolds with corners, our continuous modes occupy arbitrary subsets of smooth manifolds. In keeping with this focus on robotics, we restrict resets to be functions rather than the more general set-theoretic relations. We also generalize the notion of hybrid semiconjugacy to allow resets to be sent to ``trivial resets'' ({\em i.e.}, identity maps on continuous modes), which underlies our definition of a subdivision of a hybrid system.

As a first attempt at parallel composition, we focus here on combining decoupled hybrid systems by showing that our category of hybrid systems is cartesian. A similar theorem is proved in \cite{lerman-schmitt:open_hybrid_systems}, where the authors also explore more complex coupled systems using interconnection maps. We focus on the simpler decoupled setting to explore connections between parallel composition and the other two forms of composition under consideration. 

To study hierarchical composition, we first formalize the notion of a template-anchor pair of hybrid systems.  In dynamical systems theory \cite{Full_Koditschek_1999, Kvalheim_Revzen_2016}, a template is a low degree of freedom, idealized model of a physical system.  An anchor corresponding to such a template is a high degree of freedom, more realistic model of the same system.  A template-anchor pair consists of an embedding of a template model as an attracting, invariant subsystem of a corresponding anchor model.   If trajectories in the anchor converge to the template sufficiently quickly, then the dynamics of the template provides a good approximation for the dynamics of the anchor.

In the continuous setting, template-anchor pairs have a beautiful and well-developed theory.  For example, under reasonable assumptions the basin of attraction of the template forms a topological disk bundle, and the transverse dynamics admit a global linearization \cite{eldering2018global}.  Unfortunately, mismatches between resets in templates and anchors complicate the theory in the hybrid setting. 

To address these complications, we introduce formal subdivisions of hybrid  systems---semiconjugacies satisfying a fiber product property for executions, allowing us to add formal resets to template systems. We then define a template-anchor pair as a span for which the left leg is a template, the roof is a subdivision of the template, and the right morphism embeds this subdivision into the anchor as an attracting, invariant subsystem.   Hierarchical composition corresponds to composing spans by taking a fiber product of subdivisions over a system that is both an anchor for a simpler template and a template for a more complicated anchor.

To model sequential composition, we formulate a notion of ``directed hybrid system,'' a system in which a generic execution flows from a domain subsystem to a codomain subsystem.  Intuitively, this formalism gives a basis for modeling simple robotic behaviors as directed hybrid systems which have specified initial and final interfaces available for linking behaviors to achieve more complex behaviors.  Standard notions of generic executions (from almost all initial conditions in a measure-theoretic or topological sense) do not compose well,  so we use an adaptation of Conley's $(\varepsilon,T)$-chains~\cite{conley1978isolated} to the hybrid setting in place of executions.  Interestingly, we prove that there exists a double category of hybrid systems for which directed systems are the horizontal morphisms and semiconjugacies form the vertical morphisms, providing a setting for exploring both model abstraction together with sequential composition.  We also prove a theorem showing the compatibility of independent parallel composition with this notion of sequential composition.

\subsection{Reader's Guide}
We have attempted to make the results in this paper understandable to a diverse audience including (i) category theorists interested in applying categorical ideas to a very concrete setting, (ii) engineers  interested in a formal language for expressing hybrid dynamical concepts, and (iii) roboticists interested in abstractions of behavioral models. For this reason, when given a choice between a sophisticated categorical description and a concrete straightforward description, we prefer to give the straightforward one, though this results in an exposition that is at times less than optimally compact. Moreover, we appreciate that there are unavoidable mathematical abstractions, dynamical intricacies and robotics paradigms in this paper that are not generally presented in concert. Because of this, we have also sought to diligently provide pointers to both the mathematical literature (where broader concepts underlie our constructions) as well as to the robotics literature (where our formalism contacts more sophisticated platforms and models). While some of the technical results may be inaccessible to those without some background in both dynamical systems and category theory, we hope that the main compositional ideas will be of interest more broadly.

On a first reading, we recommend focusing on the relatively self-contained definitions of hybrid systems (\Cref{def:HybridSystem}), hybrid semiconjugacies (\Cref{def:hyb_sem}),  and executions (\Cref{def:execution}) within \Cref{sec:hs}.    \Cref{sec:cat} describes various properties of the category of hybrid systems and semiconjugacies, including the construction of independent parallel compositions (\Cref{prop:cartesian}) and the fibration of hybrid systems over directed graphs (\Cref{prop:fibration}) that provide important tools for the constructions in the subsequent sections. The main ideas of  \Cref{sec:hierarchical,sec:sequential} are basically independent and can be read in either order. \Cref{sec:hierarchical} describes hierarchical composition, which relies on the key notions of attracting sets (\Cref{def:iso_inv}), subdivisions (\Cref{def:hyb_sub}), and template-anchor pairs (\Cref{def:template-anchor}).  The main result of this section is the definition of the hierarchical composition of template-anchor pairs as a fiber product (\Cref{thm:ta_comp}). 
\Cref{sec:sequential} defines a double category for sequential composition based on  $(\varepsilon,T)$-chains (\Cref{def:chain}) which leads to the definition of a directed system (\Cref{def:directed}).  The main theorem verifies the compatibility of sequential composition of directed systems with hybrid semiconjugacy (\Cref{thm:double_cat}).

\section{Hybrid dynamics} \label{sec:hs}
In this section, we define a category $\hs$ of hybrid systems and semiconjugacies.   A hybrid system consists of a directed graph reflecting the discrete dynamics of the system, equipped with a continuous dynamical system for each vertex and a reset map for each edge.  We begin by defining our underlying category of graphs.

\subsection{Graphs}
\label{sec:graphs}
We first set notation and give some elementary properties of the particular flavor of graphs that will provide the substrate for the discrete portion of the dynamics in our treatment of hybrid systems. To internalize subdivisions of systems as morphisms, we use a nonstandard definition of graph morphism, allowing edges to map to vertices.

\begin{defn}
A {\bf directed graph} is a tuple $\graph G = (V, E, \src, \tgt)$, where $V, E$ are sets and $\src, \tgt\colon V \sqcup E \to V$ with $\src(v) = v = \tgt(v)$ for all $v \in V$. A directed graph $\graph G$ is finite if both $V$ and $E$ are finite. Note that we write $V(G), E(G)$ to distinguish vertex and edge sets when dealing with multiple graphs.

We define a {\bf graph morphism} $f\colon \graph G_1 \to \graph G_2$ to be a pair of maps $(f_V, f_E)$ with $f_V\colon V(\graph G_1) \to V(\graph G_2)$ and $f_E\colon E(\graph G_1) \to V(\graph G_2) \sqcup E(\graph G_2)$ satisfying for all $e \in E(\graph G_1)$
\begin{linenomath*}
\begin{align*}
    \src_2(f_E(e)) &= f_V(\src_1(e))\\
    \tgt_2(f_E(e)) &= f_V(\tgt_1(e)).
\end{align*}
\end{linenomath*}
Composition of graph morphisms is done in the obvious way: given $f\colon \graph G \to \graph H$ and $g\colon \graph H \to \graph K$, we can define $g\circ f\colon \graph G \to \graph K$ by setting $(g\circ f)_V = g_V \circ f_V$, $(g \circ f)_E(e) = (g_E \circ f_E)(e)$ when $f_E(e) \in E(\graph H)$, and $(g \circ f)_E(e) = (g_V \circ f_E)(e)$ when $f_E(e) \in V(\graph H)$. It is easy to check that this composition is associative, so that we get a category $\dgraph$ of directed graphs. 
\end{defn}

A $\dgraph$-morphism $G \xrightarrow{f} G'$ is monic precisely if the maps $f_V, f_E$ are injective and $\im(f_E) \subset E(G')$. Similarly, $f$ is an epimorphism precisely if $f_V\colon V(G) \to V(G')$ is surjective and $E(G') \subset \im(f_E)$.

We note that the category $\dgraph$ is equivalent to the category of directed reflexive graphs, which are usually presented by the data $G = (V, E, \src, \tgt, \mathfrak{e})$, where  $\src, \tgt\colon E \to V$ and $\mathfrak{e}\colon V \to E$ with $\src \circ \mathfrak{e} = \id_V = \tgt \circ \mathfrak{e}$. A directed reflexive graph morphism $f\colon G \to G'$ is then more easily specified as a map $f\colon E(G) \to E(G')$ satisfying the obvious compatibility conditions with $\src, \tgt$ and $\mathfrak{e}$. This formulation, however, specifies a distinguished edge for each vertex, which is inconvenient for indexing the reset maps of a hybrid system with the edge set of a graph.  In a few cases, our choice in formalism leads to slightly less elegant definitions, which we hope will not distract the reader from the otherwise straightforward underlying ideas.

\subsection{Hybrid systems}
The next several definitions have their origins in standard approaches to carefully defining hybrid systems such as \cite{simic, lerman, htp:bisimulations}, but have been tailored to the needs of our intended applications. In particular, we have designed the following definitions to be compatible with  (hierarchical compositions of) template and anchor pairs, as studied in \Cref{sec:templates_anchors}. 

For the remainder of this paper, the term {\bf manifold} will refer to a smooth manifold without boundary. We will denote the category of manifolds and smooth maps by $\mfld$. 
\begin{defn} \label{def:HybridSystem}
A {\bf hybrid system} $H$ consists of
\begin{enumerate}[(1)]
	\item a directed graph $G = (V, E, \src, \tgt)$;   
	\item for each {\bf continuous mode} $v \in V$, 
	\begin{itemize}
	    \item an {\bf ambient manifold} $M_v$
	    \item a vector field $X_v$ on $M_v$
	    \item an {\bf active set} $I_v \subset M_v$  
	    \item a {\bf flow set} $F_v \subset I_v$
	\end{itemize}
	\item for each {\bf reset} $e \in E$, a {\bf guard set} $Z_e \subset I_{\src(e)}$ and an associated {\bf reset map}\footnote{For the sake of simplicity, we do not make any continuity assumptions on the reset maps.  However, our constructions also apply to hybrid systems whose reset maps are required to be continuous or smooth (where smoothness is defined as in \Cref{sec:smooth}).} $r_e\colon Z_e \to I_{\tgt{(e)}}$.
\end{enumerate}
\end{defn}

As we define precisely below, all executions of a hybrid system take place in the active sets.  The flow sets correspond to the regions for which executions may follow the vector fields.  If an execution hits a guard set, it may take the corresponding reset map.

We will write $G(H)$, $V(H)$, and $E(H)$ for the graph, vertex set, and edge set of an arbitrary hybrid system $H$. It will sometimes be convenient to refer to the union of all guards in a given mode $v \in V$, which we will denote by 
\[
    Z_v = \bigcup_{\src(e) = v} Z_e. 
\]
Generally we reserve the following symbols throughout the paper: $M$ will refer to manifolds, $X$ to vector fields, $G$ to graphs, $Z$ to guard sets, $F$ to flow sets, and $I$ to active sets. If there is potential ambiguity, we will add superscripts ({\em e.g.}, $M_v^H$) to indicate the hybrid system associated to a symbol. 

\begin{remark}
Undoubtedly, it would have been more mathematically elegant to work with the standard presentation of directed reflexive graphs (with a distinguished edge for each vertex), allowing us to combine the notions of flow set and guard set into a single abstract concept. However, this would be a significant notational departure from existing hybrid systems literature, including the references  \cite{simic, lerman, htp:bisimulations} given above as well as the increasingly influential approach described in \cite{gst:hybrid_systems}, where the considerations of how an execution under investigation crosses through flow sets (where continuous evolution is available) and guard (jump) sets (where discrete evolution is available) are of central importance.
\end{remark}

We begin with some simple examples to illustrate the definitions.

\begin{example}[Continuous and discrete systems]
Given any complete vector field $X$ on a manifold $M$, there is a hybrid system whose graph has a single vertex $v$ and no resets, where  $F_v = I_v = M_v = M$ and $X_v = X$.

Given any function $f\colon M \to M$ on a manifold $M$, there is a hybrid system whose graph has a single mode $v$ and single reset $e$, where $Z_e = I_v = M_v = M$, $F_v = \emptyset$,  and $r_e = f$. \qed
\end{example}

\begin{example}[Rocking block (\Cref{fig:rocking-block1}), following \cite{lygeros2003dynamical}]

\begin{linenomath*}
\begin{figure*}[t!]
    \centering
    \begin{subfigure}[t]{0.4\textwidth}
        \centering
        \includegraphics[height=2in]{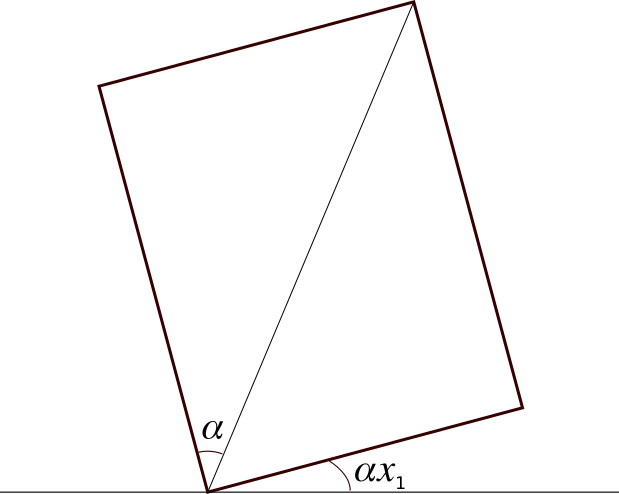}
        \caption{The block in continuous mode $L$ (left-leaning) }
    \end{subfigure}%
    \hfill
    \begin{subfigure}[t]{0.5\textwidth}
        \centering
        \includegraphics[height=2in]{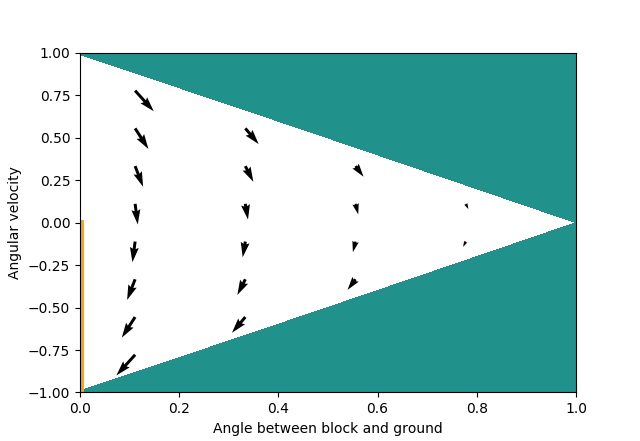}
        \caption{The continuous mode $(I_L, X_L)$ with guard set $Z_{LR}$ in orange}
    \end{subfigure}
\caption{A rocking block}
\label{fig:rocking-block1}
\end{figure*}
\end{linenomath*}

We can model a non-slipping, rocking rectangular block as a hybrid system $H$ with two continuous modes $V(H) = \{L, R\}$ corresponding to leaning to the left and right, respectively.  The system has two reset maps corresponding to the bottom of the block hitting the ground as the block switches pivots:
\begin{linenomath*}
\[
    E(H) = \{e_{LR} \colon L \to R, \quad e_{RL} \colon R \to L\}.
\]
\end{linenomath*}
Let $\alpha$ be the angle between either diagonal and the vertical face meeting the diagonal at the bottom of the block.  We assume that a constant fraction $r$ of the angular velocity is lost at each reset.

For each $v \in V(H)$, we let $M_v = \mathbb R^2$. We define the active set to be
\begin{linenomath*}
\[
    I_v = \left\{ (x_1, x_2) \in \mathbb{R}^2 \mid 0 \le x_1 \le 1 \text{ and }  \cos(\alpha (1 - x_1)) + \frac{(\alpha x_2)^2}{2} \le 1 \right\} 
\]
\end{linenomath*}
where $x_1$ is the absolute value of the angle between the horizontal block face and the ground (as a fraction of $\alpha$), and $x_2$ is the angular velocity (up to a sign).  
The guard sets are
\begin{linenomath*}
\[
    Z_e = \{ x \in I_{\src(e)} \mid x_1 = 0 \text{ and } x_2 \leq 0 \},
\]
\end{linenomath*}
and the flow sets are $F_v = I_v \setminus Z_v$.
The continuous dynamics are given by
\begin{linenomath*}
\[
    X_v = 
    \begin{pmatrix}
    x_2 \\
    -\alpha^{-1} \sin(\alpha (1-x_1))
    \end{pmatrix}
\]
\end{linenomath*}
for each vertex $v$.
The reset maps are
\begin{linenomath*}
\[
    r_e(0, x_2) = (0, -rx_2)
\]
\end{linenomath*}
for each $e \in E(H)$.\qed
\end{example}

\subsection{Smooth sets} \label{sec:smooth}
Since the dynamics of a hybrid system occur in the active sets, it makes sense to define hybrid semiconjugacies via maps of active sets, not their ambient manifolds. However, a typical active set will not even form a manifold with corners much less a manifold.  Nonetheless, it will be important for us to not abandon the smooth setting completely and work, for example, with arbitrary set functions which would not give a sufficient foundation for our constructions. Thus, we will define ``smooth maps'' of such sets, which although requiring a somewhat technical definition, will provide a solid basis and be used extensively in sequel. The reader not interested in the intricacies of these definitions is safe to skip this section and conceptually think of maps of smooth sets as a technical generalization of the usual smooth maps of manifolds. 

\begin{defn}
\label{def:smooth_set}
A \textbf{smooth set} $A$ is a pair $(A, M_A)$ where $M_A \in \mfld$ is a manifold and $A \subset M_A$ is an arbitrary subset of $M_A$.  We will often abuse notation by referring to a smooth set $(A, M_A)$ by the name of its underlying set $A$ with the understanding that there exists some fixed ambient manifold $M_A$ containing $A$.

Let $A$ and $B$ be smooth sets. Let $N \subset M_A$ be an open neighborhood of $A$ and $f \colon N \to M_{B}$ be a smooth map such that $f(A) \subset B$.  
We say that two such smooth maps $f\colon N \to M_B$ and  $f'\colon N' \to M_B$  are \textbf{germ-equivalent} on $A$ if there exists an open neighborhood $U$ of $A$ such that $U \subset N \cap N'$ and $f|_U = f'|_U$.  
Thus, $f$ is germ-equivalent to $f'$ if and only if $f$ and $f'$ define the same germ at every point in $A$. It is clear that this relation is reflexive, symmetric, and transitive. We will use the term \textbf{smooth map} $\alpha\colon A \to B$  to refer to an equivalence class of such maps.  Thus, $\alpha$ consists of a set map from ${A}$ to ${B}$ along with an germ-equivalence class of extensions to neighborhoods of $A$.

Let $A$, $B$, and $C$ be smooth sets. Suppose $\alpha\colon A \to B$ and $\beta\colon B \to C$ are smooth maps. Let $\widetilde \alpha\colon U_{ \alpha} \to M_{B}$ and $\widetilde \beta\colon U_{ \beta} \to M_{C}$ be representatives for $\alpha$ and $\beta$, respectively.  Since $\widetilde\alpha$ is continuous, the set $\widetilde{\alpha}^{-1}(U_\beta) \subset U_\alpha$ is an open neighborhood of $A$ in $M_{A}$.  Let $U_{{\beta \circ \alpha}} =  \widetilde\alpha^{-1}(U_\beta)$ and  $\widetilde{\beta \circ \alpha} = \widetilde{\beta} \circ \widetilde{\alpha}|_{\widetilde \alpha^{-1}(U_\beta)}$.  Since restriction of $U_\alpha$ and $U_\beta$ results in a restriction of $\widetilde{\beta \circ \alpha}$,  this defines a smooth map $\beta \circ \alpha $. Composition of the representatives is associative, so composition of smooth maps between smooth sets is also associative.  Lastly, if $A$ is a smooth set, then the equivalence class of the pair $(M_A, \id_{M_A})$ defines an identity map.  Thus, smooth sets and smooth maps form a {\bf category of smooth sets}, which will denote $\sset$.  

We will say that a smooth map between smooth sets is a {\bf diffeomorphism}, {\bf embedding}, or {\bf submersion} if any of its representatives is.  Similarly, if $(A, M_A)$ and $(B, M_{B})$ are smooth sets and $X\colon A \to TM_A$ and $Y\colon B \to TM_B$ are vector fields, we will say that a smooth map $\alpha \colon A \to B$ is a {\bf smooth semiconjugacy} if any of its representatives $\widetilde \alpha\colon (U_{\alpha}, X|_{U_\alpha}) \to (M_{B}, Y)$ satisfies $Y \circ \widetilde \alpha = (T \widetilde \alpha) \circ X$, where $T\widetilde\alpha$ is the differential of $\alpha$. 
\end{defn}

We note that the definition of a smooth map of subsets of manifolds is not universally agreed upon. An alternative definition ({\em e.g.}, in \cite{lee:smooth_manifolds}) requires the mere existence of a smooth extension: if $M, N \in \mfld$ and $Z$ is an arbitrary subset of $M$, then a map $\alpha\colon Z \to N$ is smooth in this sense if for every point $x \in Z$, there is some neighborhood $W_x$ of $x$ and a smooth map $\widehat{\alpha}_x\colon W_x \to N$ with $\widehat{\alpha}_x|_{Z \cap W_x} = \alpha|_{Z \cap W_x}$. Using a partition of unity argument, we can see that this is equivalent to requiring that there is a neighborhood $W$ of $Z$ in $M$ and a smooth map $\widehat\alpha\colon W \to N$ that extends $\alpha$.

However, the category arising from such a notion of smooth map is not well-behaved with respect to local conditions.  For example, an isomorphism, {\em i.e.}, a bijective map $f\colon Z \to Z'$ such that both $f$ and $f^{-1}$ have smooth extensions, need not extend to a local diffeomorphism in the ambient manifold.  Thus, any local condition on a smooth map ({\em e.g.}, being a submersion) may not preserved by composition with isomorphisms in this category. Our category of smooth sets rectifies this problem.  Using the invariance of domain theorem, one can show that the isomorphisms in $\sset$ are precisely the diffeomorphisms of smooth sets.   

As a consequence, since diffeomorphisms are defined locally the ambient manifold $M_A$ of a smooth set $A$ is largely irrelevant away from a small neighborhood of $A$.  For instance, given an embedding of smooth sets $i\colon A \hookrightarrow B$, there might not be a global embedding $M_A \hookrightarrow M_B$, but there will be a diffeomorphism of smooth sets $({A}, M_A) \to ({A}, U_i)$ realizing the restriction from $M_A$ to some neighborhood $U_i$ of $A$ and a smooth embedding $U_i \hookrightarrow M_B$ in $\mfld$. 

There is a well-defined notion of restriction for smooth maps as well.   If $f\colon A \to B$ is a smooth map with representative $(U_f, \tilde f)$ and $C \subset A$ is an arbitrary subset, then the {\bf restriction} $f|_C\colon C \to B$ is the smooth map with representative $(U_f, \tilde f)$. Equivalently, $f|_C = f \circ i_C$ where $i_C\colon C \to A$ is the smooth map represented by $\id_{M_A}$.  

We must exercise caution, however, in considering subobjects in $\sset$. For example, if $i\colon A \to B$ is an embedding of smooth sets that is not a local diffeomorphism ({\em i.e.}, $i$ is not a submersion), then $A \not\simeq i(A)$ as smooth sets.  For example, a point embedded in $\mathbb R$  is not isomorphic to a point embedded in $\mathbb R^2$.

\begin{remark}
We note that the construction described in \Cref{def:smooth_set} for defining smooth maps arises naturally in many contexts. More abstractly, we could say that, if $A$ and $B$ are smooth sets,  
\begin{linenomath*}
\[
    \Hom_{\sset}(A, B) = \colim_{U \supseteq A} \{f \in \Hom_{\mfld}(U, M_A) \mid f(A) \subset B\},
\]
\end{linenomath*}
where the colimit is over the partially ordered collection of open sets $U$ containing $Y$. In sheaf-theoretic settings, this filtered colimit is often called a ``direct limit'' and the  construction here is equivalent to taking a subset of the global sections of the presheaf $i^{-1}\Hom(M_A, M_B)$ for the inclusion $i\colon A \hookrightarrow M_A$, where $\Hom(M_A, M_B)$ is considered as a sheaf on $M_B$ in the usual way. 
\end{remark}

\subsection{Hybrid semiconjugacies}

We turn our attention to morphisms of hybrid systems. The following definition is an adaption of Lerman's definition of hybrid semiconjugacy \cite{lerman}.  The primary difference is that we allow more general directed graph morphisms in $\dgraph$ as described in \Cref{sec:graphs} above. The intuition for this definition is an extension of the idea of a smooth semiconjugacy: the dynamics of the domain should be consistent with the dynamics of the image. Specifically, condition $(1)$ ensures that the continuous dynamics are compatible, while condition $(2)$ requires that the discrete dynamics are also consistent.

\begin{defn} \label{def:hyb_sem}
A {\bf hybrid semiconjugacy} $\alpha \colon H \to K$ of hybrid systems consists of a graph morphism $G(\alpha) \colon \graph G(H) \to \graph G(K)$ (which will also refer to as $\alpha$, abusing notation) and a collection of smooth maps $\alpha_v \colon I_v^H\to I^K_{\alpha(v)}$ (in the sense of \Cref{def:smooth_set}) for $v \in V(H)$ such that
\begin{enumerate}
    \item for each $v \in V(H)$,  we have $\alpha_v(F^H_v) \subset F^K_{\alpha(v)}$ .  Moreover, the restriction $\alpha_v|_{F^H_v}\colon F^H_v \to F^K_{\alpha(v)}$ is a smooth semiconjugacy;
    \item For each reset $e \in E(H)$, we have $\alpha_{\src(e)}(Z^H_e) \subset Z^K_{\alpha(e)}$, where if $\alpha(e) \in V(K)$ then $Z^K_{\alpha(e)} := F^K_{\alpha(e)}$. Moreover, the square

\begin{linenomath*}
\begin{equation}
    \label{eq::lerman_2}
    \begin{tikzpicture}[baseline=(current bounding  box.center)]
        \node (x) at (0,0) {$Z^H_e$};
        \node (y) at (4,0) {$I_{\tgt(e)}^H$};
        \node (z) at (0,-2) {$Z^K_{\alpha(e)}$};  
        \node (w) at (4, -2) {$I_{\tgt(\alpha(e))}^K$};
        
        \draw[->, above] (x) to node {$r_e^H$} (y);
        \draw[->, left] (x) to node {$\alpha_{\src(e)}|_{Z_e^H}$} (z);
        \draw[->, right] (y) to node {$\alpha_{\tgt(e)}$} (w);
        \draw[->, below] (z) to node {$r_{\alpha(e)}^K$} (w);
    \end{tikzpicture}
    \end{equation}
\end{linenomath*}
commutes, where if $\alpha(e) \in V(\graph K)$, then the map $r^K_{\alpha(e)}$ in $(\ref{eq::lerman_2})$ should be interpreted as the inclusion map $F^K_{\alpha(e)} \hookrightarrow I^K_{\alpha(e)}$.
\end{enumerate}

 Since the squares above can be composed vertically, we can then define a category $\hempty$ of hybrid systems and hybrid semiconjugacies.  To avoid proliferation of empty  active sets when taking fiber products, we will primarily work with the full subcategory $\hs$ of hybrid systems which have no empty guard sets or active sets. We will clearly indicate the few occasions when it is necessary for us to work in the larger category $\hempty$.
\end{defn}

Since hybrid semiconjugacies are collections of maps of smooth sets, it is better to think of a continuous mode of a hybrid system as a smooth set $(I_v, M_v)$ with extra structure, rather than a manifold $M_v$ with extra structure.  In fact, the ambient manifold $M_v$ is irrelevant away from $I_v$.  More precisely, given a hybrid system $H \in \hs$ and any open neighborhood $U_v \subset M_v$ of an active set $I_v$, there is an isomorphism in $\hs$ replacing $M_v$ with $U_v$.

\begin{example}
\label{ex:initial_terminal_object}
Let $\one$ be the hybrid system with a single continuous mode where $M^{\one} = I^{\one} = \{\ast\} = F^{\one}$ with the zero vector field and no resets. Then $\one$ is terminal in $\hs$ with the unique morphism defined by constant maps.  

There is also an initial system in $\hs$ given by the hybrid system with empty graph and correspondingly empty collections of continuous modes and reset maps. \qed
\end{example}

We now define several distinguished classes of morphisms that will play an important role in particular for the investigation of template-anchor pairs in \Cref{sec:templates_anchors}.

\begin{defn}
An $\hs$-morphism $f\colon H \hookrightarrow K$ is a {\bf hybrid embedding} if the corresponding graph morphism $f\colon \graph G(H) \to \graph G(K)$ is monic in $\dgraph$ and for each $v \in V(H)$, the smooth map $f_v$ is an embedding. In this case, we will say that $H$ is a {\bf subsystem} of $K$ and write $H \subset K$, identifying $H$ with its image under $f$. Note that the composition of hybrid embeddings is also a hybrid embedding so that we get a subcategory $\hsem$ of $\hs$ whose morphisms are the hybrid embeddings.  
\end{defn}

\begin{defn}
A {\bf hybrid submersion} is an $\hs$-morphism $p\colon H\to K$ such that
\begin{enumerate}
    \item[(i)] for each $v \in V(H)$, $p_v$ is a submersion of smooth sets.
\end{enumerate}
If additionally, $p$ satisfies 
\begin{enumerate}
    \item[(ii)] $p\colon \graph G(H) \to \graph G(K)$ is an epimorphism in $\dgraph$;
    \item[(iii)] for each $u \in V(K)$, we have 
    \begin{linenomath*}
    \[
        I_u = \bigcup_{v \in p^{-1}(u)} p_v(I_v),
    \]
    \end{linenomath*}
\end{enumerate}
then we will say that $p$ is a {\bf hybrid surjective submersion}. 
\end{defn}

 It is straightforward to check that hybrid embeddings and hybrid surjective submersions are monic and epic in $\hs$, respectively. It would be desirable to completely characterize hybrid embeddings and hybrid surjective submersions via universal properties, potentially simplifying the proofs below. However, such a characterization seems unlikely since, even in the category of smooth manifolds, monomorphisms and epimorphisms are relatively wild (injective smooth maps and smooth maps with dense image, respectively).

\subsection{Hybrid state spaces}
In the previous sections, we defined a hybrid system via data associated to an underlying graph.  An alternative perspective is to consider a hybrid system as data associated to an underlying hybrid state space.

\begin{defn}
\label{def:hybrid_state_spaces}
We define the {\bf hybrid state space} of a hybrid system $H$ to be the topological space 
\begin{linenomath*}
\[
I(H) := \bigsqcup_{v \in V(H)} I_v.
\] 
\end{linenomath*}
Since a morphism $f\colon H \to K$ in $\hs$ gives a continuous map $I(H) \to I(K)$, we get a functor $I\colon \hs \to \tops$.

We will sometimes suppress mentioning the $I$ functor to simplify notation.  For example, if $f\colon H \to K$ is any $\hs$-morphism and $B \subset I(K)$, it will sometimes be convenient for us to write $f^{-1}(B)$ for the topological preimage $I(f)^{-1}(B)$ in $I(H)$.  Similarly, we will sometimes write $f(H)$ for the topological image $I(f)(I(H))$.  Given $W \subset I(H)$, we will use $W^\circ$ and $\overline W$ for the interior and closure of $W$ in $I(H)$, respectively.
\end{defn}

\begin{remark}
 In \cite{lerman}, a mathematically elegant definition of hybrid state space is given that incorporates the guards and reset maps (in that setting, reset maps are also allowed to be more general relations) into the definition of the hybrid state space. Our formulation here reflects our design decision to consider both the continuous dynamics (vector fields) and discrete dynamics (guards and reset maps) as being additional data to be specified on top of an underlying hybrid state space. In some cases of particular interest for our targeted applications this is natural, as there is a fixed ambient state space (often a subset of $\mathbb{R}^n$) modeling the physical states of the system, while various combinations of  vector fields, guards, and reset maps are used as controls. 
\end{remark}

\subsection{Executions}
We use a notion of hybrid execution similar to \cite{lygeros:executions}, but reformulated in terms of our hybrid semiconjugacies following \cite{lerman, htp:bisimulations}.   Just as an integral curve for a smooth system $(M,X)$ is a semiconjugacy $(J, \frac{d}{dt}) \to (M,X)$ for some interval $J$, we will define a hybrid execution to be a semiconjugacy $\tau \to H$ where $\tau$ is a hybrid time trajectory, the hybrid analog of an interval.  An example hybrid time trajectory is depicted in \Cref{fig:hybrid_time_trajectory}.

\begin{defn} \label{def:htt}
A {\bf hybrid time trajectory} $\tau$ is a pair $((\tau_j)_{j=0}^N, c_\tau)$  where
\begin{itemize}
\item  $(\tau_j)_{j=0}^N \subset [0, \infty]$ is a nondecreasing sequence such that $1 \le N \le \infty$, $\tau_0 = 0$, and $\tau_j \neq \infty$ for $j < N$; and
\item  the {\bf final endpoint type} $c_\tau \in \{\text{open}, \text{closed}\}$ can only be closed if $N < \infty$.  Moreover, if $N < \infty$ and $\tau_N = \tau_{N-1}$, then $c_\tau$ must be closed.
\end{itemize}

To any hybrid time trajectory $\tau$, we associate the following hybrid system:
\begin{itemize}
    \item Let $G(\tau)$ be a directed path with vertices $(v_j)_{j=0}^{N-1}$ and edges $(e_j)_{j=0}^{N-2}$
    \item For all $0 \le j < N$, let $(M_{v_j}, X_{v_j}) = (\mathbb{R}, \frac{d}{dt})$ and $F_{v_j} = [\tau_j, \tau_{j+1})$. In particular, $F_{v_j} = \emptyset$ if $\tau_j = \tau_{j+1}$.
    \item Let
    \begin{linenomath*}
    \[
    I_{v_j} = 
    \begin{cases}
    [\tau_{j}, \tau_j) &  j = N-1 \text{ and } c_\tau = \text{open} \\
    [\tau_{j}, \tau_{j+1}] &   \text{otherwise}
    \end{cases}
    \]
    \end{linenomath*}
    \item  For $j < N-1$, let $Z_{e_j} = \{\tau_{j+1}\}$, and $r_{e_j}\colon Z_{e_j} \to I_{v_{j+1}}$ be the map induced by $\id_{\mathbb{R}}$. 
\end{itemize} 

If $1 \le j < N$, then we say that $\tau_j$ is a {\bf jump time} of $\tau$. We define the {\bf stop time} of $\tau$ to be
\begin{linenomath*}
\[
    \tau^\stp :=
    \begin{cases}
    \tau_N & N < \infty\\
    \displaystyle \lim_{j \to \infty} \tau_j & N = \infty
    \end{cases}
\]
\end{linenomath*}

If $\tau^\stp = \infty$, we say that $\tau$ is {\bf infinite}. If $N < \infty$ and $\tau^\stp < \infty$, we say that $\tau$ is {\bf finite}. Finally, if $N = \infty$, but $\tau^\stp < \infty$, we say that $\tau$ is  {\bf Zeno} with Zeno time $\tau^\stp$. 

For each $t \geq 0$, we define the set of {\bf time $\bm{t}$-points} of $\tau$ to be 
\begin{linenomath*}
\[
    \tau(t) = \{x \in I(\tau) \mid x = t \}.
\]
\end{linenomath*}
In particular, we say that $\min(I_{v_0})$ is the {\bf starting point} of $\tau$.   If $N < \infty$, $\tau_N < \infty$, and $c_\tau = \text{closed}$, we say that $\tau$ has an {\bf end point}, namely $ \max(I_{v_{N-1}})$.
\end{defn}

\begin{defn}
Let $\tau$ and $\sigma$ be hybrid time trajectories. We say that $\tau$ is a {\bf prefix} of $\sigma$ if there exists a hybrid embedding $i\colon \tau \to \sigma$ mapping the starting point of $\tau$ to the starting point of $\sigma$, i.e. $i(v_0^\tau) = v_0^\sigma$ and $i_{v_0^\tau}(0) = 0$.
\end{defn}

\begin{linenomath*}
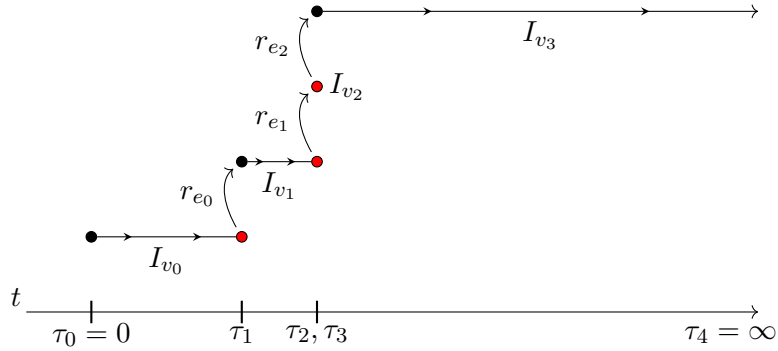
\begin{figure}
\begin{tikzpicture}
    \node[endpoint] (t0) at (0,0) {};
    \node[guard] (t1) at (2,0) {};
    \node[endpoint] (t2) at (2,1) {};
    \node[guard] (t3) at (3,1) {};
    \node[guard] (t4) at (3,2) {};
    \node[endpoint] (t5) at (3,3) {};
    \node (t6) at (9,3) {};
    \node (tbeg) at (-1,-1)  {};
    \node (tend) at (9,-1) {};
    
    \node (zero) at (0,-1.3) {$\tau_0 = 0$};
    \draw[thick] (0, -.85) -- (0, -1.15);
    \node (tau1) at (2,-1.3) {$\tau_1$};
    \draw[thick] (2, -.85) -- (2, -1.15);
    \node (tau23) at (3,-1.3) {$\tau_2,\tau_3$};
    \draw[thick] (3, -.85) -- (3, -1.15);
    \node (tau4) at (8.5,-1.3) {$\tau_4 = \infty$};
    \node (v2) at (3.4, 2) {$I_{v_2}$};

    \node (t) at (-1, -.8) {$t$};
    
    \draw[-, postaction={mid arrows},below] (t0) to node {$I_{v_0}$} (t1);
    \draw[-, postaction={mid arrows},below] (t2) to node {$I_{v_1}$} (t3);
    \draw[->, postaction={mid arrows},below] (t5) to node {$I_{v_3}$} (t6);
    
    \draw[->] (tbeg) to (tend);
    
    \draw[->, shorten <=2pt, shorten >=2pt, left, out=120, in=220] (t1) to node {$r_{e_0}$}(t2);
    \draw[->, shorten <=2pt, shorten >=2pt, left, out=120, in=220] (t3) to node {$r_{e_1}$}(t4);
    \draw[->, shorten <=2pt, shorten >=2pt, left, out=120, in=220] (t4) to node {$r_{e_2}$}(t5);
\end{tikzpicture}
\caption{A pictorial representation of a hybrid time trajectory with four continuous modes aligned vertically with the corresponding points on the non-negative real line below. The guards are marked in red.}
\label{fig:hybrid_time_trajectory}
\end{figure}
\end{linenomath*}

\begin{defn} \label{def:execution}
An {\bf execution} of a hybrid system $H$ is a hybrid semiconjugacy $\chi\colon \tau \to H$ for some hybrid time trajectory $\tau$. 

We call the set of edges $e \in E(\tau)$ such that $\chi(e) \in V(H)$ the {\bf $\chi$-trivial resets}.  If $\chi(e) \in E(H)$ for all $e \in E(\tau)$ ({\em i.e.} the execution has no trivial resets), we say that $\chi$ is a {\bf fundamental  execution}.  Every execution has an {\bf associated fundamental execution} $\chi^* \colon \tau^* \to H$  given by deleting trivial resets.\footnote{More precisely, for every (possibly infinite) maximal contiguous string of $\chi$-trivial resets $T = (e_j)_{j=k}^m \in E(\tau)$,
(i) replace the edge-induced subgraph of $T$ in $G(\tau)$ with a single vertex $v$ , (ii) 
set $I_v := I_{v_k} \cup I_{v_{m+1}}$ if $m$ is finite and $I_v := I_{v_k}$ otherwise, and (iii) define $\chi_v^*$ by 
($\chi^*_v)_{|I_{v_k}} = \chi_{v_k}$ and, if $m$ is finite, $(\chi^*_v)_{|I_{v_{m+1}}} = \chi_{v_{m+1}}$.}

We say that $\chi$ is {\bf infinite}, {\bf finite}, or {\bf Zeno} if $\tau$ has the corresponding property. 
We say that $\chi$ is a {\bf prefix} of an execution $\psi\colon \upsilon \to H$ if $\tau$ is a prefix of $\upsilon$ and $\chi = \psi \circ i$ where $i\colon \tau \to \upsilon$ is the canonical embedding. An execution is {\bf maximal} if it is not the prefix of another execution; in particular, all infinite executions are maximal. We let $\exc_H$ denote the set of executions of $H$, and let $\infexc_H \subset \exc_H$ denote the infinite executions.  We let $\exc_H(x)$ denote the set of executions starting at a point $x \in I(H)$, and similarly for $\infexc_H(x)$.
\end{defn}

\begin{remark}
\label{rem:zeno_continuations}
Notice that this definition of execution allows for infinitely many jumps in finite time, but does not allow for subsequent execution after the stop time. In particular, we allow Zeno executions as described above, but do not permit explicit continuations of Zeno executions to be considered as a single execution. 
\end{remark}

\subsection{Deterministic and nonblocking systems}
We now define deterministic and nonblocking hybrid systems, two well-behaved subclasses of hybrid systems whose intersection is analogous to the subclass of complete vector fields within all vector fields.

\begin{defn} \label{def:deterministic}
A hybrid system $H$ is {\bf deterministic} if for every $v \in V(H)$, (i) the collection of sets $\{F_v\} \cup \{ Z_e \mid \src(e) = v \}$ is pairwise disjoint and (ii) for every point $x \in F_v$ the set of $X_v$-integral curves $J \colon [0,T) \to F_v$ with $J(0) = x$ has a unique maximal element with respect to the domain-restriction relation.
\end{defn}

We justify this definition with the following simple proposition (cf. \cite[Lemma~III.2]{lygeros2003dynamical}).

\begin{prop} \label{prop:det}
Let $H$ be a deterministic system and $x \in I(H)$.  Then $\exc_H(x)$ contains a unique maximal fundamental execution.
\end{prop}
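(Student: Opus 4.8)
The plan is to build the maximal fundamental execution explicitly by a greedy ``flow-then-jump'' recursion, and then to show that determinism forces \emph{every} fundamental execution starting at $x$ to be a prefix of it; maximality and uniqueness follow at once.

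\textbf{Construction.} I would maintain a current state $y \in I_w$, initialized to $y = x$ with $w$ the mode containing $x$, and alternate two steps. Flow step: if $y \in F_w$, invoke clause (ii) of \Cref{def:deterministic} to obtain the unique maximal $X_w$-integral curve $J\colon [0,T) \to F_w$ with $J(0) = y$; this is the continuous part of the current mode, reparametrized onto $[\tau_j, \tau_{j+1})$ with $\tau_{j+1} = \tau_j + T$. Jump step: if $T < \infty$ and the limit $y' := \lim_{t \to T^-} J(t)$ exists in $I_w$ and lies in some guard $Z_e$ with $\src(e) = w$, then by clause (i) of \Cref{def:deterministic} this guard is unique, so I append a closed endpoint carrying the value $y'$, set $y \leftarrow r_e(y') \in I_{\tgt(e)}$ and $w \leftarrow \tgt(e)$, and recurse. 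The recursion halts when $T = \infty$, when the flow's terminal limit fails to exist in $I_w$ or lands outside every guard, or when the initial $y$ lies in neither $F_w$ nor any $Z_e$. Reading off the accumulated jump times produces a hybrid time trajectory $\tau$ (\Cref{def:htt}), and the reparametrized integral curves assemble into maps $\chi_{v_j}$. Condition (1) of a semiconjugacy holds because each segment is an integral curve into $F$, and condition (2) holds because each jump applies the matching reset $r_e$, so the reset square commutes by construction; since every edge of $\tau$ is sent to a genuine edge of $H$, the result $\chi$ is a \emph{fundamental} execution. If the recursion never halts we obtain $N = \infty$ (possibly Zeno), still a legitimate execution by \Cref{def:htt}.

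\textbf{Forcing.} The key lemma is that any fundamental execution $\psi\colon \upsilon \to H$ with starting point $x$ is a prefix of $\chi$, proved by induction on the mode index (passing to the limit when $N = \infty$). On a flow mode, $\psi$ restricts to an $X$-integral curve into $F$ with the prescribed initial value, so the uniqueness clause (ii) of \Cref{def:deterministic} forces it to coincide with the corresponding segment of $\chi$ on the common domain and prevents $\upsilon$ from prolonging the flow past the maximal integral curve. At a jump, clause (i) guarantees the terminal point lies in \emph{at most one} guard, so the edge taken, the reset $r_e$, and the next initial value are all determined. Thus $\psi$ tracks $\chi$ mode by mode and embeds into it as a prefix. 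The \emph{fundamental} hypothesis is essential here: a non-fundamental execution may subdivide a single flow by inserting $\psi$-trivial resets, destroying the rigid correspondence of modes, and it is precisely passage to associated fundamental executions (\Cref{def:execution}) that removes this ambiguity.

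With the forcing lemma, uniqueness is immediate: two maximal fundamental executions from $x$ are each prefixes of $\chi$ and each maximal, hence both equal to $\chi$. For maximality of $\chi$ I would argue that it admits no proper prolongation: were $\chi$ a prefix of some $\psi$, then $\psi$ (or its associated fundamental execution $\psi^*$, if $\psi$ carries trivial resets) again starts at $x$, so the lemma makes $\psi^*$ a prefix of $\chi$, pinning $\psi^* = \chi$; and a genuine continuation is ruled out exactly because the recursion halted---no flow is available since the maximal integral curve has expired, and no guard is met---while past an infinite or Zeno stop time \Cref{rem:zeno_continuations} forbids continuation outright. I expect the main obstacle to be the endpoint analysis of each flow: deciding termination versus continuation hinges on the limiting behavior of the maximal integral curve inside the \emph{non-manifold} smooth set $F_w$ (whether $\lim_{t\to T^-}J(t)$ exists in $I_w$, and whether it meets a guard), and the degenerate case in which the limit lies back in $F_w$ must be reconciled with the trivial-reset padding permitted by condition (2); this, together with keeping the induction airtight through the infinite and Zeno regime, is where the real work lies.
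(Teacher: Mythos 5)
Your architecture---an explicit greedy construction plus a forcing lemma---is sound in outline and is actually more ambitious than the paper's proof, which establishes only uniqueness: the paper takes two maximal fundamental executions and shows by induction on modes, using clause (ii) of \Cref{def:deterministic} on flow segments and clause (i) at jumps (exactly your forcing mechanism), that they coincide, leaving existence implicit. Your forcing lemma, specialized to two maximal fundamental executions, recovers the paper's argument.

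However, there is a genuine gap, and it sits precisely where you deferred ``the real work'': the case where the maximal integral curve $J\colon [0,T) \to F_w$ has $T < \infty$ and $y' := \lim_{t \to T^-} J(t)$ exists and lies in $F_w$ itself. Your recursion then halts ($y'$ meets no guard, since guards are disjoint from $F_w$ by clause (i)), but the resulting fundamental execution is \emph{not} maximal: condition (2) of \Cref{def:hyb_sem} permits a trivial reset exactly when the endpoint value lies in a flow set, so the execution extends by a trivial reset followed by further flow from $y'$ (clause (ii) supplies a maximal integral curve of positive length from every point of $F_w$). Your closing claim that ``no flow is available \ldots and no guard is met'' is therefore false in this case, and with it the maximality---indeed the existence assertion of the proposition---would fail, since any maximal execution through such a point must carry trivial resets. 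The fix is to prove the case is vacuous: if $y' \in F_w$, let $J'\colon [0,T') \to F_w$ with $T' > 0$ be the maximal integral curve from $y'$; since the vector field is smooth, the concatenation of $J$ and $J'$ is an integral curve $[0, T+T') \to F_w$ starting at $J(0)$ that strictly extends $J$, contradicting the maximality required by clause (ii). With that lemma the halting analysis closes: a finite-time limit, when it exists in $I_w$, lies either in a unique guard (jump) or in $I_w \setminus (F_w \cup Z_w)$, in which case you must append the closed endpoint in the halting step as well (your construction as written adds it only before jumps---otherwise the output is a proper, hence non-maximal, prefix), and no continuation of any kind remains available.
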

\begin{proof}
Let $\chi, \chi' \in \exc_H(x)$ be maximal fundamental executions. Let $(\tau_n)$ and $(\tau_n')$ be the jump times of $\chi$ and $\chi'$, respectively, and similarly for the corresponding intervals $I_{v_n}$ and $I_{v_n}'$.  We will argue by induction on $n$ that $I_{v_n} = I_{v_n}'$ and $\chi_{|I_{v_n}} = \chi_{|I_{v_n}}'$ for all $n$, which implies the desired result. 

For $n = 0$, we have $\tau_0 = 0 = \tau_0'$.  If $x \in Z_e$ for some edge $e \in E(H)$, then $\tau_1 = 0 = \tau_1'$, so $I_{v_0} = I_{v_0}'$ and $\chi_{|I_{v_0}} = \chi_{|I_{v_0}}'$.    If $x \in F_v$ for some vertex $v \in V(H)$, then  \Cref{def:deterministic} implies that (i) $\tau_1 = \tau_1'$ so $I_{v_0} = I_{v_0}'$ (ii) $\chi$ and $\chi'$ agree on $[0,\tau_1)$ and hence on $[0,\tau_1]$ (if $I_{v_0}$ is right-closed) by continuity. 

For the inductive step, suppose that $I_{v_n} = I_{v_n}'$ and $\chi_{|I_{v_n}} = \chi_{|I_{v_n}}'$.  If $\chi_{|I_{v_n}}(\tau_{n+1})$ either (i) does not exist or (ii) does not lie in a guard set, then we are done.  Otherwise, we have $\chi_{|I_{v_n}}(\tau_{n+1})  = \chi_{|I_{v_n}}'(\tau_{n+1}') \in Z_e$ for some edge $e \in E(H)$.  Thus, $\tau_{n+1} = \tau_{n+1}'$ and $\chi_{I_{v_{n+1}}}(\tau_{n+1}) = r_e(\chi_{I_{v_n}}(\tau_{n+1})) = \chi'_{I_{v_{n+1}}}(\tau_{n+1})$.  By the same argument as in the preceding paragraph, it follows that $I_{v_{n+1}} = I_{v_{n+1}}'$ and $\chi_{|I_{v_{n+1}}} = \chi_{|I_{v_{n+1}}}'$.
\end{proof}

The usual definition of determinism (uniqueness of maximal executions) corresponds to the conclusion of \Cref{prop:det}. The main advantage of using \Cref{def:deterministic} instead is that products of deterministic systems remain deterministic (\Cref{cor:dn_prod}).  Simple examples demonstrate this result fails to hold in the absence of the disjointness condition. 

Conversely, if a hybrid system $H$ satisfies the conclusion of \Cref{prop:det}, then the following slight modification of $H$ is deterministic in the sense of \Cref{def:deterministic}. We first note that distinct guard sets of $H$ cannot overlap because any execution starting at a point of overlap could immediately reset via either associated reset. On the other hand, there \emph{can} be overlap of guard sets and flow sets ({\em e.g.}, $(F_v, X_v) = ([0,1], \frac{d}{dt})$ with $Z_v = \{1\}$), but removing the points of overlap from the flow sets results in a hybrid system with the same active sets and maximal executions that is deterministic in the sense of \Cref{def:deterministic}.

\begin{defn} \label{def:nonblocking}
We say that a hybrid system $H$ is {\bf nonblocking} if for all $x \in I(H)$ there exists an infinite execution starting at $x$. 
\end{defn}

We denote the full subcategory of $\hs$ of deterministic nonblocking systems by $\hsdn$. 

\begin{example}
Every hybrid time trajectory $\tau$ is deterministic.  If $\tau$ is infinite, then $\tau$ is also nonblocking.  \qed
\end{example}

\section{Categorical properties of \texorpdfstring{$\hs$}{\bf H}} \label{sec:cat}

The following basic results provide a basis for the compositional constructions below.  In particular,  we show that the category of hybrid systems $\hs$ is fibered over  $\dgraph$, our alternative presentation of the category of reflexive graphs.  Applying this result, we construct fiber products of hybrid semiconjugacies along hybrid submersions.   As a corollary, we find that $\hs$ and $\hsdn$ are cartesian, thus supporting the most basic form of parallel composition. 

To study limits in $\hs$, we begin with the corresponding notion for graphs. The following well-known lemma gives this construction. The proof of the lemma is straightforward, but we provide it here for convenience.
\begin{lemma}
\label{lem:digraph_complete}
The category $\dgraph$ is complete and cocomplete. 
\end{lemma}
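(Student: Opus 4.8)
The plan is to transport the problem along the equivalence, noted just above in the text, between $\dgraph$ and the category of directed reflexive graphs, and then to recognize the latter as a functor category into $\sets$, for which bicompleteness is standard. The reason for passing to reflexive graphs is precisely the nonstandard morphisms of $\dgraph$: because a graph morphism is permitted to send an edge to a vertex, $\dgraph$ is not the ordinary presheaf category of quivers; but these edge-to-vertex maps correspond exactly to edges landing on the distinguished self-loops $\mathfrak{e}(v)$ of a reflexive graph, which is what repairs the discrepancy and makes the reflexive picture a genuine functor category.

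First I would make the target category explicit. Let $\mathcal{R}$ be the small category with two objects $\mathtt{v}, \mathtt{e}$, generating non-identity morphisms $\src, \tgt \colon \mathtt{e} \to \mathtt{v}$ and $\mathfrak{e} \colon \mathtt{v} \to \mathtt{e}$, subject to the relations $\src \circ \mathfrak{e} = \id_{\mathtt{v}} = \tgt \circ \mathfrak{e}$. One checks that $\mathcal{R}$ is finite: the only new composites are the two idempotents $\mathfrak{e}\,\src, \mathfrak{e}\,\tgt \colon \mathtt{e} \to \mathtt{e}$, and every endomorphism of $\mathtt{v}$ collapses to $\id_{\mathtt{v}}$. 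A functor $F \colon \mathcal{R} \to \sets$ is then exactly the data of a directed reflexive graph: sets $F(\mathtt{v}), F(\mathtt{e})$, source and target maps $F(\src), F(\tgt)$, and a common section $F(\mathfrak{e})$ of both; natural transformations are precisely morphisms of reflexive graphs. Hence the category of directed reflexive graphs is the functor category $[\mathcal{R}, \sets] = \sets^{\mathcal{R}}$.

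Next I would invoke the standard fact that for any small category $\mathcal{C}$ the functor category $[\mathcal{C}, \sets]$ is complete and cocomplete, with all limits and colimits computed objectwise, so that they are inherited from the corresponding limits and colimits in $\sets$. Since $\sets$ is both complete and cocomplete, so is $[\mathcal{R}, \sets]$. Finally, completeness and cocompleteness are preserved under equivalences of categories, so the equivalence $\dgraph \simeq \sets^{\mathcal{R}}$ obtained by composing the equivalence of the text with the identification above yields that $\dgraph$ is complete and cocomplete, as claimed.

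The only real content lies in these two reductions, and neither is difficult: verifying that the equivalence of $\dgraph$ with reflexive graphs matches morphisms correctly — on the enlarged edge set $V \sqcup E$ a $\dgraph$-morphism $(f_V, f_E)$ becomes the single edge map $f_V \sqcup f_E$, with $f_E \colon E \to V \sqcup E$ recording edge-to-vertex behavior as landing on self-loops — and recalling that functor (co)limits are objectwise. I expect the mild bookkeeping of the equivalence on morphisms to be the main place where care is needed; everything downstream is a citation. As a concrete alternative one could instead build the four elementary (co)limits in $\dgraph$ by hand, taking (co)limits of the vertex sets in $\sets$ and of the edge data inside $V \sqcup E$; this works but is fiddlier, precisely because the edge-to-vertex morphisms force identifications between edges and vertices in colimits, which is exactly the subtlety that the reflexive-graph reformulation absorbs automatically.
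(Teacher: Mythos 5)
Your proposal is correct, but it takes a genuinely different route from the paper. The paper proves the lemma by direct construction inside $\dgraph$: it builds (co)products explicitly --- taking the vertex set of a product to be the product of vertex sets, and the edge set to be $\prod_j \left(E(G_j) \sqcup V(G_j)\right) \setminus \prod_j V(G_j)$, the ``generalized edges'' with the all-vertex tuples removed --- and then handles (co)equalizers componentwise on vertex and edge maps. You instead transport the problem along the equivalence (asserted but not proved in the paper) between $\dgraph$ and directed reflexive graphs, identify the latter as the functor category $\sets^{\mathcal{R}}$ for the finite category $\mathcal{R}$ generated by $\src, \tgt \colon \mathtt{e} \to \mathtt{v}$ and $\mathfrak{e} \colon \mathtt{v} \to \mathtt{e}$ with $\src \circ \mathfrak{e} = \id_{\mathtt{v}} = \tgt \circ \mathfrak{e}$, and cite objectwise (co)limits in presheaf-type categories plus invariance of bicompleteness under equivalence. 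Your identification of $\mathcal{R}$ and its hom-sets is right, and the morphism bookkeeping ($f_V \sqcup f_E$ as the single edge map) is exactly what makes the equivalence work, so the argument is sound --- though note that it leans on the equivalence claim that the paper itself leaves unverified, so a fully self-contained version would still owe that check (essential surjectivity and fully faithfulness, including functoriality of the case-split composition). What your approach buys is uniformity and conceptual clarity: all limits and colimits of all shapes come at once, and the paper's slightly odd product formula is demystified --- it is precisely the objectwise product $(V_1 \sqcup E_1) \times (V_2 \sqcup E_2)$ of reflexive graphs with the distinguished diagonal $V_1 \times V_2$ stripped back out. What the paper's hands-on construction buys is the explicit description of $G \boxtimes H$ and of coproducts, which is not incidental: those formulas are used verbatim later (in \Cref{prop:cartesian} and in the hybrid fiber-product construction), so the paper needs the concrete recipe anyway, not just existence.
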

\begin{proof}
As usual, it is enough to see that $\dgraph$ has all (co)products and also all (co)equalizers. Given a collection of directed graphs $\{G_j\}_{j \in J}$ indexed by a set $J$, let $V\left(\prod_{j \in J} G_j\right) = \prod_{j \in J} V(G_j)$ and $V(\coprod_{j \in J} G_j) = \bigsqcup_{j \in J} V(G_j)$. If we set $P_j = E(G_j) \sqcup V(G_j)$ as generalized edges, then let 
\begin{linenomath*}
\begin{gather*}
    E\left(\prod_{j \in J} G_j\right) = \prod_{j \in J} P_j \setminus \prod_{j \in J} V(G_j)\\
    E\left(\coprod_{j \in J} G_j \right) = \bigsqcup_{j \in J} E(G_j)
\end{gather*}
\end{linenomath*}
with the obvious source and target maps for each. Then we have projections $\pi_i\colon \prod G_j \to G_i$ and canonical injections $k_i\colon G_i \to \coprod G_j$ which are just the set-theoretic projections and injections. It is straightforward to check that these constructions are universal. 

For (co)equalizers of graph morphisms $f, g\colon G \to H$, we can simply take the (co)equalizers of the set maps $f_V, g_V$ and $f_E, g_E$.
\end{proof}    

We  denote the product of two graphs $G$ and $H$ by $G \boxtimes H$ to emphasize the difference between this product and what is sometimes referred to as the ``categorical product'' of graphs. That product, where the edge set for the product is simply the product of the edge sets, is the natural product in the category where edges are necessarily mapped only to edges.

To extend this construction to the hybrid setting, we prove that the following functors are Grothendieck fibrations (see \cite{loregian-riehl:fibrations}, Section 2.2 for excellent recent exposition on categorical fibrations).  Roughly speaking, this means that we can form ``pullbacks'' of hybrid systems along graph morphisms analogous to pullbacks of vector fields along smooth maps.

\begin{prop}
\label{prop:fibration}
The forgetful functors $G\colon \hempty \to \dgraph$ and $G_{|H}\colon \hs \to \dgraph$ are fibrations.
\end{prop}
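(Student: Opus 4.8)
The plan is to verify directly that $G \colon \hempty \to \dgraph$ is a Grothendieck fibration by constructing cartesian lifts of graph morphisms, and then to check that the same construction restricts to $\hs$. Recall that to show $G$ is a fibration, I must show that for every hybrid system $K \in \hempty$ and every graph morphism $\phi \colon \graph G_1 \to G(K)$ in $\dgraph$, there exists a \emph{cartesian} hybrid semiconjugacy $\alpha \colon H \to K$ with $G(\alpha) = \phi$, where cartesian means it satisfies the appropriate universal lifting property: any hybrid semiconjugacy $\beta \colon H' \to K$ whose underlying graph morphism factors through $\phi$ in $\dgraph$ factors \emph{uniquely} through $\alpha$ in $\hempty$ over that graph factorization.

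\textbf{Constructing the pullback hybrid system.} Given $K$ and $\phi \colon \graph G_1 \to G(K)$, I would build $H$ by pulling back all the continuous and discrete data along $\phi$, taking $G(H) = \graph G_1$. For each vertex $v \in V(\graph G_1)$, set the active set, flow set, ambient manifold, and vector field to be literal copies of those at $\phi(v) \in V(K)$: that is, $I_v^H := I_{\phi(v)}^K$, $F_v^H := F_{\phi(v)}^K$, $M_v^H := M_{\phi(v)}^K$, and $X_v^H := X_{\phi(v)}^K$, with $\alpha_v$ the identity map of smooth sets. For each edge $e \in E(\graph G_1)$, the subtlety is that $\phi_E(e)$ may land in either $E(K)$ or $V(K)$. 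When $\phi_E(e) \in E(K)$, set $Z_e^H := Z_{\phi_E(e)}^K$ (pulled back as a subset of $I_{\src(e)}^H = I_{\phi(\src e)}^K = I_{\src(\phi_E e)}^K$, using the source-compatibility of graph morphisms) with reset map the same $r_{\phi_E(e)}^K$. When $\phi_E(e) \in V(K)$ (a ``trivial reset''), the edge must correspond to an identity-type reset; here I set $Z_e^H := F_{\phi(\src e)}^K$ with reset map the inclusion into $I_{\tgt(e)}^H$, matching the convention in \Cref{def:hyb_sem} condition (2). The semiconjugacy squares \eqref{eq::lerman_2} commute by construction since every vertical map is an identity.

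\textbf{Verifying the cartesian property.} For the universal property, suppose $\beta \colon H' \to K$ is a hybrid semiconjugacy and $\psi \colon G(H') \to \graph G_1$ is a graph morphism with $\phi \circ \psi = G(\beta)$. I would define the unique lift $\gamma \colon H' \to H$ over $\psi$ by setting $\gamma_w := \beta_w$ for each $w \in V(H')$ (this typechecks because $\alpha_{\psi(w)}$ is the identity, so $I_{\psi(w)}^H = I_{\phi\psi(w)}^K = I_{G(\beta)(w)}^K$). One checks that $\gamma$ satisfies conditions (1) and (2) of \Cref{def:hyb_sem} exactly because $\beta$ does and $\alpha$ consists of identities, and that $\alpha \circ \gamma = \beta$; uniqueness is forced since $\alpha_v = \id$ for all $v$. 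The main obstacle I anticipate is \textbf{bookkeeping around the edge-to-vertex case of the graph morphisms} --- our nonstandard $\dgraph$ allows $\phi_E(e) \in V(K)$, so one must carefully check that guard sets, reset maps, and the commuting squares behave correctly under the trivial-reset convention in both the construction and the lifting step, rather than any deep difficulty.

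\textbf{Restricting to $\hs$.} Finally, for $G_{|H} \colon \hs \to \dgraph$, I must confirm the construction stays inside $\hs$, i.e.\ produces no empty active or guard sets. Since $K \in \hs$ has all active and guard sets nonempty and the pullback copies them verbatim (with the trivial-reset case assigning $Z_e^H = F_{\phi(\src e)}^K$, which is nonempty as a flow set of $K \in \hs$ provided $F_{\phi(\src e)}^K \neq \emptyset$), the lifted system $H$ lies in $\hs$, and the same cartesian morphism works since $\hs$ is full in $\hempty$. Thus both functors are fibrations.
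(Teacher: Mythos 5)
Your proof is correct and follows essentially the same route as the paper: both construct the cartesian lift by pulling back all continuous and discrete data verbatim along the graph morphism (using flow sets as guards for edges sent to vertices) and then verify the universal lifting property directly, with uniqueness forced by the invertibility of the components. The only organizational difference is that the paper first proves that every semiconjugacy whose components $f_v$ are diffeomorphisms is cartesian (so the lift in the universal property is $f_{\varphi(v)}^{-1} \circ g_v$) and then observes that the pulled-back system's projection lies in that class, whereas you specialize to identity components so that the lift is literally $\gamma_w = \beta_w$; your parenthetical proviso about $F^K_{\phi(\src(e))} = \emptyset$ in the $\hs$ case flags a genuine subtlety, but one the paper's own proof glosses over in exactly the same way.
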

\begin{proof}
Let $\mathscr C$ be the class of hybrid semiconjugacies $H \xrightarrow{f} K$ in $\hempty$ (respectively, $\hs$) such that for each $v \in V(H)$ the map $f_v\colon I_v^H \to I_{f(v)}^K$ is a diffeomorphism. 
We claim that each $f \in \mathscr C$ is a cartesian morphism. To see this, suppose $g\colon H' \to K$ is another hybrid semiconjugacy and that $\varphi\colon G(H') \to G(H)$ is a graph morphism $G(f) \circ \varphi = G(g)$. We claim that  the graph morphism $\varphi$ lifts to a hybrid semiconjugacy $ \tilde\varphi \colon H' \to H$ by letting $\tilde\varphi_v(x) = f_{\varphi(v)}^{-1} \circ g_v$, which is well defined since any smooth map representative of $f_{\varphi(v)}$ is a diffeomorphism onto a neighborhood of $I_{g(v)}^K$. 

The fact that $\tilde\varphi$ is a hybrid semiconjugacy then follows from the commutativity of the reset diagrams for $g$ and $f$.  Moreover, the lift $\tilde\varphi$ is unique, since any other such lift must have the same underlying continuous semiconjugacies. 

To see that $G$ is a fibration, let $H$ be a hybrid system and $\varphi\colon \Gamma \to G(H)$ a graph morphism for some graph $\Gamma$. Then we can obtain a hybrid system $K_\varphi$ with underlying graph $\Gamma$, pulling back the dynamics of $H$ along $\varphi$. 

More precisely,  we can lift $\varphi$ to a cartesian hybrid semiconjugacy $\tilde\varphi\colon K_\varphi \to H$ by setting 
$G(K_\varphi) = \Gamma$. Then for each $v \in V(K_\varphi)$ let $(I^{K_\varphi}_v, F^{K_\varphi}_v, X^{K_\varphi}_v) = (I^H_{\varphi(v)}, F^H_{\varphi(v)}, X^H_{\varphi(v)})$ and for each $e \in E(K_{\varphi})$ let $Z^{K_\varphi}_e = Z^H_{\varphi(e)}$ when $\varphi(e) \in E(H)$ or $Z^{K_\varphi}_e = F^H_{\varphi(e)}$ if $\varphi(e) \in V(H)$. 
\end{proof}

Recall that the fiber categories for $G$ are a family of subcategories $\hs_{\Gamma}$ of $\hs$ indexed by objects of $\dgraph$ ({\em i.e.} each having a fixed graph $\Gamma$), where $\hs_\Gamma$ has objects hybrid systems $H \in \hs$ with $G(H) = \Gamma$ and morphisms $f\colon H \to K$ with $G(f) = \id_\Gamma$.

The following construction is well-known and generalizes in the obvious way to limits of diagrams of any shape provided that these limits exist in the base of a fibration and in the relevant fiber categories. We record here the proof for fiber products to set notation (for use in \Cref{prop:hybrid_fiber_products}, in particular).

\begin{lemma}
\label{lem:fiber_products_fibration}
If $F\colon\mathbf{A}\to\mathbf{B}$ is a fibration and both $\mathbf{B}$ and the fiber categories $\mathbf{A}_L$ have fiber products for $L \in \mathbf{B}$, then $\mathbf{A}$ has fiber products.
\end{lemma}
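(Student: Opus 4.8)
The plan is to build the fiber product in $\mathbf{A}$ in three stages—first downstairs in $\mathbf{B}$, then by cartesian lifting, and finally inside a single fiber category—mirroring the pullback-of-dynamics construction already used in \Cref{prop:fibration}. Fix a cospan $X \xrightarrow{f} Z \xleftarrow{g} Y$ in $\mathbf{A}$. First I would form the fiber product $P := FX \times_{FZ} FY$ in $\mathbf{B}$, with projections $p\colon P \to FX$, $q\colon P \to FY$, and common composite $w := Ff \circ p = Fg \circ q\colon P \to FZ$. Using that $F$ is a fibration, I would choose cartesian lifts $\bar{p}\colon X_P \to X$ over $p$, $\bar{q}\colon Y_P \to Y$ over $q$, and $\bar{w}\colon Z_P \to Z$ over $w$, so that $X_P, Y_P, Z_P$ all lie in the fiber $\mathbf{A}_P$.

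Next I would descend the legs $f,g$ into this fiber. The composite $f \circ \bar{p}\colon X_P \to Z$ lies over $w$, so cartesianness of $\bar{w}$ yields a unique $\alpha\colon X_P \to Z_P$ over $\id_P$ with $\bar{w} \circ \alpha = f \circ \bar{p}$; symmetrically one gets $\beta\colon Y_P \to Z_P$ over $\id_P$ with $\bar{w} \circ \beta = g \circ \bar{q}$. Since $\mathbf{A}_P$ has fiber products by hypothesis, I would form $W := X_P \times_{Z_P} Y_P$ in $\mathbf{A}_P$ with projections $\pi_1\colon W \to X_P$, $\pi_2\colon W \to Y_P$, and claim that $W$, equipped with $a_0 := \bar{p} \circ \pi_1\colon W \to X$ and $b_0 := \bar{q} \circ \pi_2\colon W \to Y$, is the desired fiber product. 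Commutativity is immediate: $f \circ a_0 = \bar{w} \circ \alpha \circ \pi_1 = \bar{w} \circ \beta \circ \pi_2 = g \circ b_0$, using the defining equation $\alpha \circ \pi_1 = \beta \circ \pi_2$ of the pullback in $\mathbf{A}_P$.

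For the universal property, given a competing cone $a\colon T \to X$, $b\colon T \to Y$ with $f \circ a = g \circ b$, I would first push down: the universal property of $P$ in $\mathbf{B}$ produces a unique $u\colon FT \to P$ with $p \circ u = Fa$ and $q \circ u = Fb$. Cartesianness of $\bar{p}$ and $\bar{q}$ then lifts $a,b$ to unique $\hat{a}\colon T \to X_P$ and $\hat{b}\colon T \to Y_P$ over $u$ with $\bar{p} \circ \hat{a} = a$ and $\bar{q} \circ \hat{b} = b$. A short chase through the cartesian map $\bar{w}$ shows $\alpha \circ \hat{a} = \beta \circ \hat{b}$, since both compose with $\bar{w}$ to $f \circ a = g \circ b$ and such lifts over $u$ are unique. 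It then remains to produce a unique $h\colon T \to W$ over $u$ with $\pi_1 \circ h = \hat{a}$ and $\pi_2 \circ h = \hat{b}$, whence $a_0 \circ h = a$ and $b_0 \circ h = b$.

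This last step is where I expect the real work to lie, and it is the main obstacle: the fiber-product property of $W$ is a priori only a statement about morphisms of $\mathbf{A}_P$ (those over $\id_P$), whereas $\hat{a}, \hat{b}, h$ all live over the morphism $u$. Bridging this gap requires that $W$ be stable under reindexing along $u$—equivalently, that the inverse-image functor $u^{\ast}\colon \mathbf{A}_P \to \mathbf{A}_{FT}$ carry the defining pullback square of $W$ to a pullback square in $\mathbf{A}_{FT}$, after which the universal property in $\mathbf{A}_{FT}$ supplies $h$ uniquely. I would handle this by lifting $\hat{a}, \hat{b}$ through the cartesian map covering $u$ into $\mathbf{A}_{FT}$ and applying the fiber product there. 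In full generality this preservation is the one point needing care—it is the standard extra ingredient in the theorem relating limits of a total category to those of its base and fibers—but for the fibrations at hand, where reindexing merely re-indexes the dynamics as in \Cref{prop:fibration} and fiber products are computed modewise, it holds directly, so the construction goes through and yields the fiber product in $\mathbf{A}$.
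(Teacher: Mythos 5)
Your construction is the same as the paper's---fiber product in $\mathbf{B}$, cartesian lifts of its projections, descent of the two legs to vertical morphisms in the fiber over that fiber product, and a fiber product taken there---and your verification of the universal property runs exactly as far as the paper's does. The obstacle you isolate in your last paragraph is genuine, and it is precisely the step the paper's proof waves away: the paper asserts that universality of the lifted fiber product in $\mathbf{A}$ ``follows from'' universality of $F(\tilde L)$ in $\mathbf{B}$ together with cartesianness of the lifted projections, but those two facts only produce your cone $(\hat a, \hat b)$ over $u\colon FT \to P$; factoring that cone through $W$ requires the universal property of $W$ against cones that are \emph{not} vertical, which is exactly the requirement that the reindexing functor $u^{\ast}$ carry the defining pullback square of $W$ to a pullback square. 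So your diagnosis is sharper than the paper's own proof. However, as written neither your argument nor the paper's proves the lemma \emph{as stated}, because without that preservation hypothesis the statement is false for general fibrations.

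A concrete counterexample: let $\mathbf{B} = \mathbf{2}$ be the category with objects $0,1$ and one nonidentity arrow $u\colon 0 \to 1$ (it has all fiber products), let both fibers be $\sets$, and let reindexing along $u$ be the covariant powerset functor $\mathcal{P}$; the Grothendieck construction $\mathbf{A} \to \mathbf{2}$ is then a fibration with $\Hom_{\mathbf{A}}((0,W),(1,Y)) \cong \Hom_{\sets}(W,\mathcal{P}(Y))$ and no morphisms from the fiber over $1$ to the fiber over $0$. Consider the cospan $(1,\{0,1\}) \to (1,\{\ast\}) \leftarrow (1,\{0,1\})$. Any pullback lying over $1$ would, by testing against cones from fiber-$1$ objects (all such morphisms are vertical), have to be $\{0,1\}\times\{0,1\}$ with its two projections; but the cone from $(0,\{\ast\})$ given by the full subsets $S = T = \{0,1\}$ factors through that candidate in more than one way, since both the diagonal $\{(0,0),(1,1)\}$ and the full subset $\{0,1\}\times\{0,1\}$ have full image under both projections. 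A candidate lying over $0$ receives no morphisms from fiber-$1$ objects at all, so this cospan has no pullback in $\mathbf{A}$. The repair is the standard one you allude to: add to the lemma the hypothesis that the inverse-image functors preserve fiber products, after which your outline (and the paper's) closes up. This costs nothing downstream: for the fibration $G\colon \hempty \to \dgraph$ of \Cref{prop:fibration}, reindexing along a graph morphism merely re-labels mode and edge data, and the fiber products built in \Cref{prop:hybrid_fiber_products} are computed mode-wise, so they are preserved and \Cref{prop:hybrid_fiber_products} stands. The change I would make to your write-up is to promote the content of your final paragraph from a closing remark to an explicit hypothesis of the lemma, together with its verification for $G$---as it stands, the general claim you were asked to prove is not true.
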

\begin{proof}
Suppose we have $A \xrightarrow{f} B \xleftarrow{g} C$ in $\mathbf{A}$. Then we can apply $F$ and obtain the fiber product in $\mathbf{B}$:
\begin{linenomath*}
\[
    \begin{tikzpicture}
        \node (x) at (0,0) {$L$};
        \node (y) at (3,0) {$F(A)$};
        \node (z) at (0,-2) {$F(C)$};
        \node (w) at (3, -2) {$F(B)$.};
        
        \draw +(.25,-.75) -- +(.75,-.75)  -- +(.75,-.25);

        \draw[->, above] (x) to node {$\varphi_A$} (y);
        \draw[->, left] (x) to node {$\varphi_C$} (z);
        \draw[->, right] (y) to node {$F(f)$} (w);
        \draw[->, below] (z) to node {$F(g)$} (w);
    \end{tikzpicture}
\]
\end{linenomath*}
Now we can use the fibration to obtain a cartesian $\mathbf{A}$-morphism $\tilde\varphi_A\colon L_A \to A$ lifting $\varphi_A$. Similarly, we can lift $\varphi_C$ and $\varphi_B := F(f) \circ \varphi_A$ to get the diagram in $\mathbf{A}$
\begin{linenomath*}
\[
    \begin{tikzpicture}
        \node (lt) at (0,0) {$\tilde L$};
        \node (la) at (3,0) {$L_A$};
        \node (lc) at (0,-2) {$L_C$};
        \node (lb) at (3, -2) {$L_B$};
        
        \node (c) at (0, -4) {$C$};
        \node (a) at (6,0) {$A$};
        \node (b) at (6, -4) {$B$};
        
        \draw +(.25,-.75) -- +(.75,-.75)  -- +(.75,-.25);

        \draw[->, above] (la) to node {$\tilde\varphi_A$} (a);
        \draw[->, left] (lc) to node {$\tilde\varphi_C$} (c);
        \draw[->, left] (lb) to node {$\tilde\varphi_B$} (b);
        \draw[->, right] (a) to node {$f$} (b);
        \draw[->, below] (c) to node {$g$} (b);
        
        \draw[->, below] (lc) to node {$\psi_g$} (lb);
        \draw[->, right] (la) to node {$\psi_f$} (lb);
        \draw[->] (lt) to (la);
        \draw[->] (lt) to (lc);
    \end{tikzpicture}
\]
\end{linenomath*}
where $\psi_f, \psi_g$ are the unique lifts of the identity $\id_L$ with respect to the cartesian morphism $\tilde\varphi_B$ and the compositions $f \circ \tilde\varphi_A, g \circ \tilde\varphi_B$, respectively. Then $\tilde L$ is the fiber product of $L_C \xrightarrow{\psi_C} L_B \xleftarrow{\psi_A} L_A$ in the fiber category $\mathbf{A}_L$. The fact that $\tilde L$ is also universal in $\mathbf{A}$ then follows from recalling that $F(\tilde{L})$ is universal in $\mathbf{B}$ and that $\tilde\varphi_C$ and $\tilde\varphi_A$ are cartesian in $\mathbf{A}$.  \end{proof}

Our last ingredient is the following simple lemma allowing us to translate limits in $\hempty$ into limits in $\hs$.  Constructing limits in the former category is more straightforward, but the latter category is more useful for applications. First, we require a definition.

\begin{defn}
Let $i\colon \hs \to \hempty$ denote the inclusion functor of the full subcategory, and $P \colon \hempty \to \hs$ be the {\bf pruning functor} defined on objets by deleting each continuous mode $v$ such that $I_v = \emptyset$ and each reset $e$ such that $Z_e = \emptyset$, and on morphisms by removing the smooth maps with empty domain. It is straightforward to check that $P$ is indeed functorial, using the conditions imposed in \Cref{def:hyb_sem} to ensure that $P$ is well-defined on morphisms.
\end{defn}

\begin{lemma} \label{lem:hempty}
The inclusion $i\colon \hs \to \hempty$ is left adjoint to $P\colon \hempty \to \hs$.   
\end{lemma}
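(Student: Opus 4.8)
The plan is to exhibit the adjunction $i \dashv P$ directly, taking the unit to be the identity and building the counit out of subsystem inclusions. The guiding observation is that a hybrid semiconjugacy emanating from a system in $\hs$ is blind to the pruned portion of its target: since every active and guard set of $H \in \hs$ is nonempty, their images under any semiconjugacy land in nonempty active and guard sets, which are precisely the modes and resets that survive pruning. Thus every $\alpha\colon i(H) \to K$ will automatically factor through $P(K)$, and this factorization is exactly the universal property needed.

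First I would record that $P \circ i = \id_{\hs}$: for $H \in \hs$ all active and guard sets are nonempty, so pruning deletes nothing, whence $P(i(H)) = H$ on objects and likewise on morphisms, and the unit $\eta\colon \id_{\hs} \Rightarrow P i$ may be taken to be the identity natural transformation. Next I would construct the counit. For $K \in \hempty$ I claim $P(K)$ is a subsystem of $K$: every surviving reset $e$ (one with $Z_e^K \neq \emptyset$) has surviving endpoints, since $Z_e^K \subset I_{\src(e)}^K$ forces $I_{\src(e)}^K \neq \emptyset$, and the reset map $r_e^K\colon Z_e^K \to I_{\tgt(e)}^K$ has nonempty domain hence nonempty image, forcing $I_{\tgt(e)}^K \neq \emptyset$. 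Consequently the source and target maps of $K$ restrict to $P(K)$, and the subsystem inclusion $\epsilon_K\colon i(P(K)) \hookrightarrow K$ is a hybrid embedding. Naturality of $\epsilon$ in $K$ is immediate from the definition of $P$ on morphisms; the same nonemptiness argument shows a semiconjugacy $k\colon K \to K'$ carries $P(K)$ into $P(K')$, which is precisely why $P$ is functorial.

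The heart of the proof is the universal property of $\epsilon_K$. Given $H \in \hs$ and a hybrid semiconjugacy $\alpha\colon i(H) \to K$, I would show $\alpha$ factors uniquely through $\epsilon_K$. For each $v \in V(H)$ the set $I_v^H$ is nonempty, so $\emptyset \neq \alpha_v(I_v^H) \subset I_{\alpha(v)}^K$ forces $I_{\alpha(v)}^K \neq \emptyset$, i.e.\ $\alpha(v) \in V(P(K))$. For each reset $e \in E(H)$ the guard $Z_e^H$ is nonempty; if $\alpha(e) \in E(K)$ then $\alpha_{\src(e)}(Z_e^H) \subset Z_{\alpha(e)}^K$ is nonempty, so $\alpha(e) \in E(P(K))$, while if $\alpha(e) \in V(K)$ then $\alpha(e) = \alpha(\src(e))$ already lies in $V(P(K))$ by the preceding sentence. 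Hence the graph morphism $G(\alpha)$ lands in $G(P(K))$ and each $\alpha_v$ corestricts to a smooth map into $P(K)$; since the flow, guard, and active sets of $P(K)$ agree with those of $K$ on the surviving modes and resets, conditions $(1)$--$(2)$ of \Cref{def:hyb_sem} are inherited, yielding a morphism $\bar\alpha\colon H \to P(K)$ in $\hs$ with $\epsilon_K \circ i(\bar\alpha) = \alpha$. Uniqueness is immediate because $\epsilon_K$ is monic. This exhibits $(P(K), \epsilon_K)$ as a universal arrow from $i$ to $K$ for every $K$, one of the standard equivalent formulations of $i \dashv P$ with counit $\epsilon$ and unit $\eta = \id$; the triangle identities then collapse to trivialities, since $\eta$ is the identity and $\epsilon_{i(H)} = \id_{i(H)}$ for $H \in \hs$.

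I expect the only genuinely delicate points to be the bookkeeping forced by the graph category of \Cref{sec:graphs}, in which an edge may be sent to a vertex. Specifically, the case $\alpha(e) \in V(K)$ in the factorization argument, together with the consistency check that a surviving reset has surviving endpoints (so that $P(K)$ has no dangling edges), are where I would be most careful. Everything else is formal manipulation of the universal property.
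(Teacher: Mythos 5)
Your proposal is correct, and it uses exactly the same adjunction data as the paper --- unit $\eta = \id$ and counit $\varepsilon_K \colon i(P(K)) \hookrightarrow K$ the subsystem inclusion --- but it verifies the adjunction by a different route. The paper's proof simply writes down these components and checks the two counit--unit (triangle) equations, which collapse to compositions of identities; naturality of $\varepsilon$ and well-definedness of $P$ on morphisms are left as implicit easy checks. You instead prove that each $\varepsilon_K$ is a universal arrow from $i$ to $K$: every semiconjugacy $\alpha\colon i(H) \to K$ factors uniquely through $\varepsilon_K$, because nonemptiness of the active and guard sets of $H$ propagates along $\alpha$ (with the edge-to-vertex case handled by $\alpha(e) = \alpha(\src(e))$), so the image of $\alpha$ never meets the pruned part of $K$. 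These are equivalent characterizations of $i \dashv P$, so both proofs are complete. What your route buys is that the verification is self-contained: the same nonemptiness argument simultaneously shows that $P(K)$ has no dangling edges (so $\varepsilon_K$ is a genuine hybrid embedding), that $P$ is functorial, and that the factorization exists and is unique ($\varepsilon_K$ being monic); your closing observation that the triangle identities become trivial then recovers the paper's argument as a special case. What the paper's route buys is brevity --- given the definitions of $i$, $P$, $\eta$, and $\varepsilon$, the triangle identities are a two-line computation --- at the cost of deferring the bookkeeping about pruning to the reader.
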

\begin{proof}
For any hybrid system $H \in \hs$, the component of the unit $\eta_H\colon H \to P(i(H))$ is the identity $\id_H$ and for any $K \in \hempty$, the component of the counit $\varepsilon_K\colon i(P(K)) \to K$ is the semiconjugacy determined by the graph embedding $G(P(K)) \hookrightarrow G(K)$ with identity maps at each vertex.   It is easy to check that the counit-unit equations hold:
$\varepsilon_{i(H)} \circ i(\eta_H) = \id_{i(H)} \circ \id_{i(H)} = \id_{i(H)}$ for $H \in \hs$, and $P(\epsilon_K) \circ \eta_{P(K)} = \id_{P(K)} \circ \id_{P(K)} = \id_{P(K)}$ for $K \in \hempty$.
\end{proof}

\begin{cor}
If $D\colon \catname J \to \hs$ is a diagram in $\hs$ and $L \in \hempty$ is the limit of the diagram $i \circ D$, then $P(L)$ is the limit of $D$.
\end{cor}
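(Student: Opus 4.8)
The plan is to deduce the statement formally from the adjunction established in \Cref{lem:hempty} together with the standard fact that right adjoints preserve limits. Since $i\colon \hs \to \hempty$ is left adjoint to the pruning functor $P$, the functor $P$ is a \emph{right} adjoint, and hence preserves every limit that exists in $\hempty$. In particular, because $L$ is by hypothesis the limit of $i \circ D$ in $\hempty$, applying $P$ yields that $P(L)$ is a limit of the composite $P \circ i \circ D$ in $\hs$ (and in particular this limit exists).

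It then remains only to identify $P \circ i \circ D$ with $D$ itself. Here I would use that $i$ is fully faithful, being the inclusion of a full subcategory, so that the unit of the adjunction is an isomorphism; more concretely, as computed in the proof of \Cref{lem:hempty}, each component $\eta_H\colon H \to P(i(H))$ is the identity $\id_H$, since any $H \in \hs$ has no empty active or guard sets and is therefore fixed by pruning. Consequently $P \circ i$ is literally the identity functor on $\hs$, so $P \circ i \circ D = D$ and the limit of $P \circ i \circ D$ is precisely the limit of $D$. Combining this with the previous paragraph gives that $P(L)$ is the limit of $D$ in $\hs$, as desired.

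I do not anticipate a genuine obstacle: the result is a formal consequence of the adjunction, and the only point requiring care is confirming that $P \circ i$ equals the identity rather than being merely naturally isomorphic to it, which is immediate from the explicit description of the unit. Alternatively, one could argue directly by applying $P$ to the legs of the limit cone $\{L \to i(D_j)\}_j$ to obtain a cone $\{P(L) \to D_j\}_j$ in $\hs$, and then checking universality by including an arbitrary competing cone into $\hempty$, factoring it uniquely through $L$, and applying $P$ again; but this merely re-derives the adjunction argument by hand and is less clean to state.
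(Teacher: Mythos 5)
Your proposal is correct and follows essentially the same route as the paper's own proof: invoke the adjunction $i \dashv P$ of \Cref{lem:hempty}, use that right adjoints preserve limits, and identify $P \circ i \circ D = D$ via the fact that pruning fixes every object of $\hs$. The extra care you take in confirming that $P \circ i$ is literally the identity (rather than merely isomorphic to it) is a point the paper passes over silently but is entirely consistent with its argument.
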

\begin{proof}
Since $P$ is a right adjoint, it preserves limits (see Theorem 13.3.7 of \cite{barr-wells} or Theorem 4.5.2 of \cite{riehl:ctc}), so $P(L)$ is the limit of the diagram $P \circ i \circ D = D$. 
\end{proof}
Putting the preceding results together gives the following proposition, which generalizes the construction of fiber products of smooth maps along submersions to the hybrid setting.

\begin{prop}
\label{prop:hybrid_fiber_products}
If $p\colon K_1 \to H$ is a hybrid submersion and $f\colon K_2 \to H$ is any hybrid semiconjugacy, then the fiber product of $K_1 \xrightarrow{p} H \xleftarrow{f} K_2$ exists in $\hs$. 
\end{prop}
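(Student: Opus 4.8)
The plan is to construct the fiber product first in the larger category $\hempty$, where empty active and guard sets are permitted, and then transport it into $\hs$ by pruning. Applying the forgetful fibration $G\colon\hempty\to\dgraph$ of \Cref{prop:fibration} to the cospan $K_1\xrightarrow{p}H\xleftarrow{f}K_2$, I would first form the fiber product $\Gamma = G(K_1)\times_{G(H)}G(K_2)$ in $\dgraph$, which exists because $\dgraph$ is complete (\Cref{lem:digraph_complete}). Following the construction in the proof of \Cref{lem:fiber_products_fibration}, I then lift the projections $\Gamma\to G(K_1),\ \Gamma\to G(K_2)$ and the composite $\Gamma\to G(H)$ to cartesian hybrid semiconjugacies, obtaining over the single graph $\Gamma$ a cospan $L_{K_1}\xrightarrow{\psi_p}L_H\xleftarrow{\psi_f}L_{K_2}$ in the fiber of $\hempty$ above $\Gamma$. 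The problem then reduces to producing the fiber product of this cospan inside that fiber category.

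This is exactly the point where the submersion hypothesis on $p$ is used. The cartesian lifts furnished by the proof of \Cref{prop:fibration} have diffeomorphisms as their vertex components, and $\psi_p$ is obtained by composing $p$ with such components; since a submersion of smooth sets remains a submersion after composition with diffeomorphisms, each vertex component of $\psi_p$ is again a submersion of smooth sets. Consequently, although \Cref{lem:fiber_products_fibration} nominally requires all fiber products in the fibers, for our cospan I only need the fiber product of a cospan one of whose legs is a vertex-wise submersion, and I would establish the existence of such fiber products in the fiber above $\Gamma$ directly.

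The crux is the construction of fiber products of smooth sets along submersions, carried out vertex by vertex and edge by edge over the fixed graph $\Gamma$. For each vertex $w$ I would set the active set to be the set-theoretic fiber product $I_w := I_w^{L_{K_1}}\times_{I_w^{L_H}}I_w^{L_{K_2}}$: choosing representatives $\widetilde{\psi_p}\colon U\to M$ (a submersion) and $\widetilde{\psi_f}\colon U'\to M$ on neighborhoods of the active sets, transversality of a submersion against any smooth map guarantees that $U\times_M U'$ is a smooth manifold, which I take as the ambient manifold of $I_w$; one checks that the resulting smooth set is independent of the representatives up to diffeomorphism in $\sset$. The flow set $F_w$ is the analogous fiber product, since $\psi_p$ restricts to a submersion on flow sets, and the smooth-semiconjugacy requirement of \Cref{def:hyb_sem} forces a unique vector field $X_w$ on $F_w$: the vector $(X^{L_{K_1}},X^{L_{K_2}})$ is tangent to the fiber product precisely because $T\psi_p(X^{L_{K_1}})$ and $T\psi_f(X^{L_{K_2}})$ both equal $X^{L_H}$ at the common base point. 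For each edge $\epsilon$, since guards are merely subsets and resets merely functions, I take the set-theoretic fiber product $Z_\epsilon := Z_\epsilon^{L_{K_1}}\times_{Z_\epsilon^{L_H}}Z_\epsilon^{L_{K_2}}\subset I_{\src(\epsilon)}$ with the componentwise reset, which lands in $I_{\tgt(\epsilon)}$ by commutativity of the two reset squares. It then remains to check that the projections are hybrid semiconjugacies and that this object enjoys the universal property in the fiber above $\Gamma$, the smooth structure of the mediating map again coming from the submersion pullback.

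Finally, the assembly argument of \Cref{lem:fiber_products_fibration} upgrades this fiber computation to the fiber product $L$ of $K_1\xrightarrow{p}H\xleftarrow{f}K_2$ in $\hempty$. Since $K_1,K_2,H$ lie in $\hs$, the cospan is the image under the inclusion $i$ of a cospan in $\hs$; applying the pruning functor $P$ and invoking the corollary to \Cref{lem:hempty} (that $P$, being a right adjoint, preserves limits), I conclude that $P(L)$ is the required fiber product in $\hs$. I expect the genuine difficulty to be concentrated in the third paragraph: checking that the transversality construction descends cleanly to germ-equivalence classes of smooth maps, and that the pulled-back vector field together with the reset data really satisfy the smooth-semiconjugacy and commuting-square conditions of \Cref{def:hyb_sem}.
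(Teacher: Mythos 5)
Your proposal is correct and follows essentially the same route as the paper's proof: reduce to $\hempty$ via the pruning adjunction of \Cref{lem:hempty}, use \Cref{prop:fibration}, \Cref{lem:digraph_complete}, and \Cref{lem:fiber_products_fibration} to reduce to a single fiber category over the graph fiber product (noting, as the paper does, that cartesian lifts have diffeomorphism components so the lifted leg remains a vertex-wise submersion), and then build the fiber product vertex-by-vertex and edge-by-edge, with the ambient manifolds obtained as transverse (submersion) fiber products of representatives and the active sets, flow sets, guards, and resets given by set-theoretic fiber products with componentwise data. The points you flag as the genuine difficulties (well-definedness on germ-equivalence classes, the semiconjugacy and reset-square checks) are exactly the verifications the paper's proof carries out or leaves as routine.
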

\begin{proof}
By \Cref{lem:hempty}, it suffices to define the fiber product in $\hempty$.  By applying \Cref{lem:fiber_products_fibration} to \Cref{prop:fibration} and \Cref{lem:digraph_complete}, it suffices to construct fiber products in each fiber category $\hs_\Gamma$ for $\Gamma \in \dgraph$.  Hence, we can assume $G(K_i) = \Gamma = G(H)$ for some graph $\Gamma \in \dgraph$ and that $G(p) = \id_\Gamma$. Note that to make this simplification, we are implicitly also relying on the fact that the cartesian morphisms for $G$ are diffeomorphisms on each active set; this implies that the lifting $\psi_p$ of $p$ to $\hs_\Gamma$ (using the notation of the proof of \Cref{lem:fiber_products_fibration}) is a hybrid submersion precisely when $p$ is a hybrid submersion.

For each $v \in V(\Gamma)$, let $U_{p_v} \subset M^{K_1}_v$ be an open neighborhood of $I_v^{K_1}$ for some representative of $p_v$.  Let $U_{f_v} \subset M^{K_2}_v$ be the domain of a representative of $f_v$. We define the manifold $M_v^L \subset M^{K_1}_v \times M^{K_2}_v$ to be the fiber product $U_{p_v} \times_{M^H_v} U_{f_v}$ in $\mfld$.

We define the  active set  $I_v^L \subset M_v^L$ by
\begin{linenomath*}
\[
    I_v^L = \{(x_1, x_2) \in I_v^{K_1} \times I_v^{K_2} \mid p_v(x_1) = p_v(x_2) \}
\]
\end{linenomath*}
which is just the fiber product of  $I_v^{K_1} \times_{I_v^H} I_v^{K_2}$ in $\sets$.

Similarly, we define the flow set $F^L_v \subset I^L_v$ to be the $\sets$ fiber product $F_{v}^{K_1} \times_{F_v^H} F_{v}^{K_2}$ and the continuous dynamical system $(N_v^L, X_v^L)$ as the fiber product $(N_v^{K_1}, X_{v}^{K_1}) \times_{(N_v^H, X_v^H)} (N_v^{K_2}, X_v^{K_2})$ in $\cs$, which again exists since $p_v$ is a submersion. By construction, we have $F^L_v \subset N^L_v$.

For each edge $e \in E(\Gamma)$,  the guard set $Z^L_e \subset I^L_{\src(e)}$ is given by the $\sets$ fiber product $Z_e^L = Z_e^{K_1} \times_{Z^H_e} Z_e^{K_2}$.  
Finally, we can define the reset map $r_e^L\colon Z_e^L \to I_{\tgt(e)}^L$ by $r_e^L(x_1, x_2)= (r_e^{K_1}(x_1), r_e^{K_2}(x_2))$, realizing the fiber product of the guards as a subset of the product.   

The above data specifies a hybrid system $L \in \hempty$. The canonical projections $\pi_{i,v}\colon M_v^L \to M^{K_i}_v$ define hybrid semiconjugacies $\pi_i\colon L \to K_i$.  Moreover, we have a commuting square
\begin{linenomath*}
\[
    \begin{tikzpicture}
        \node (x) at (0,0) {$L$};
        \node (y) at (3,0) {$K_2$};
        \node (z) at (0,-2) {$K_1$};
        \node (w) at (3, -2) {$H$};
        
        \draw[->, above] (x) to node {$\pi_2$} (y);
        \draw[->, left] (x) to node {$\pi_1$} (z);
        \draw[->, right] (y) to node {$f$} (w);
        \draw[->, below] (z) to node {$p$} (w);
    \end{tikzpicture},
\]
\end{linenomath*}
Since each of the components of $L$ is a fiber product of components of $K_i$ and the reset maps are simply the products of reset maps, it follows that $L$ satisfies the universal property of the fiber product.
\end{proof}

The following theorem is a translation of a result in \cite{lerman-schmitt:open_hybrid_systems} into our language and setting.  The existence of products is a consequence of \Cref{prop:hybrid_fiber_products}, but we provide a separate explicit construction here since it will be useful below in showing, for example, that the subcategory of deterministic, nonblocking systems is closed under taking products.  

\begin{prop}
\label{prop:cartesian}
The category $\hs$ is cartesian (has finite products) and cocartesian (has finite coproducts).
\end{prop}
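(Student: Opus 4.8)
The plan is to establish products and coproducts separately, in each case by exhibiting the relevant terminal/initial object together with binary (co)products; having a terminal object and binary products yields all finite products, and dually for coproducts. For products, \Cref{ex:initial_terminal_object} already supplies the terminal object $\one$, so it remains to construct binary products. First I would observe that the unique morphism $K \to \one$ is a hybrid submersion for every $K \in \hs$ (each component $I_v \to \{\ast\}$ is trivially a submersion of smooth sets, since its differential maps onto the zero tangent space of a point), so that $K_1 \times K_2$ already exists abstractly as the fiber product $K_1 \times_{\one} K_2$ furnished by \Cref{prop:hybrid_fiber_products}. Since we will need the explicit form later (for instance to show $\hsdn$ is closed under products), I would then spell out the construction directly.

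Explicitly, given $K_1, K_2 \in \hs$, set $G(K_1 \times K_2) = G(K_1) \boxtimes G(K_2)$. For a vertex $(v_1, v_2)$ take the ambient manifold $M_{v_1} \times M_{v_2}$, the product vector field $X_{v_1} \times X_{v_2}$, the active set $I_{v_1} \times I_{v_2}$, and the flow set $F_{v_1} \times F_{v_2}$. Recall that a generalized edge of $G(K_1) \boxtimes G(K_2)$ is a pair $\epsilon = (\epsilon_1, \epsilon_2)$ with each $\epsilon_i$ a vertex or edge of $G(K_i)$, not both vertices. I would define the guard and reset of $\epsilon$ by cases tracking which components jump: if $\epsilon_i = e_i \in E(K_i)$ for both $i$, put $Z_\epsilon = Z_{e_1} \times Z_{e_2}$ and $r_\epsilon = r_{e_1} \times r_{e_2}$; if $\epsilon_1 = e_1 \in E(K_1)$ but $\epsilon_2 = v_2 \in V(K_2)$, put $Z_\epsilon = Z_{e_1} \times F_{v_2}$ with $r_\epsilon = r_{e_1} \times (F_{v_2} \hookrightarrow I_{v_2})$, and symmetrically in the remaining case. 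This is exactly the system produced by the fiber-product recipe of \Cref{prop:hybrid_fiber_products} over $\one$, once one unwinds that an edge mapping to the lone vertex of $\one$ contributes the flow set as its guard. Since a guard such as $Z_{e_1} \times F_{v_2}$ can be empty when $F_{v_2} = \emptyset$, strictly speaking this data lives in $\hempty$; applying the pruning functor $P$ (\Cref{lem:hempty}) deletes those edges and lands the product back in $\hs$ without affecting the universal property.

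To finish the product case, I would verify that the coordinate projections $\pi_i \colon K_1 \times K_2 \to K_i$ (induced by the graph projections and the smooth-set projections $I_{v_1} \times I_{v_2} \to I_{v_i}$) are hybrid semiconjugacies satisfying the universal property: a cone $(g_1, g_2)$ from $H$ factors uniquely through $(\pi_1, \pi_2)$ because each piece of data (graph, active set, flow set, guard, reset) is built as a genuine set- or $\sset$-product, so the mediating map is forced componentwise. The coproduct case is comparatively routine: the initial object is the empty system of \Cref{ex:initial_terminal_object}, and the binary coproduct $K_1 \sqcup K_2$ is the disjoint union, with graph $G(K_1) \coprod G(K_2)$ and all continuous and discrete data inherited unchanged. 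The canonical inclusions are hybrid embeddings, and the universal property holds because a semiconjugacy out of a disjoint union is precisely a pair of semiconjugacies out of the summands.

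The main obstacle is the bookkeeping for the generalized edges of $\boxtimes$: one must correctly define the guards and resets in the mixed vertex--edge cases, check that each reset square of \Cref{def:hyb_sem} commutes for the projections (with the vertex-valued components interpreted as inclusions $F_v \hookrightarrow I_v$), and confirm that the $\boxtimes$-product --- rather than the naive edge-to-edge ``categorical product'' of graphs --- is what the universal property selects. Once this is matched up with the fiber-product construction over $\one$ and the pruning step is in place to control empty guards, the verification of universality is forced by the componentwise product structure.
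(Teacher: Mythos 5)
Your proposal is correct and follows essentially the same route as the paper: the same explicit construction of $G(K_1)\boxtimes G(K_2)$ with product data on vertices and case-by-case guards/resets on generalized edges (vertex components contributing flow sets with inclusion resets), the same use of \Cref{lem:hempty} to prune empty guards, and the same componentwise verification of the universal property, with disjoint unions for coproducts. Your additional framing of the product as the fiber product $K_1 \times_{\one} K_2$ via \Cref{prop:hybrid_fiber_products} is also sound (every map to $\one$ is trivially a hybrid submersion), and indeed the paper makes this same observation in the remarks preceding the proposition, opting for the explicit construction only because it is needed later for \Cref{cor:dn_prod}.
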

\begin{proof}
From \Cref{ex:initial_terminal_object}, we have a terminal object $\one \in \hs$. To define binary products in $\hs$, by \Cref{lem:hempty} it suffices to define binary products in $\hempty$. Given $H_1, H_2 \in \hempty$ for $i=1,2$, we define the hybrid system $H_1 \times H_2 \in \hempty$ as follows:
\begin{itemize}
    \item $\graph G(H_1 \times H_2) = G(H_1) \boxtimes \graph G(H_2)$ as defined in \Cref{lem:digraph_complete};
    \item for each $v = (v_1, v_2) \in   V(H_1 \times H_2) =  V(H_1) \times V(H_2)$,  we let $M_v = M_{v_1} \times M_{v_2}$, $X_v = X_{v_1} \times X_{v_2}$, $I_v = I_{v_1} \times I_{v_2}$, and $F_v = F_{v_1} \times F_{v_2}$;
    \item for each $e = (e_1, e_2) \in E(H_1 \times H_2) = (E(H_1) \sqcup V(H_1)) \times (E(H_2) \sqcup V(H_2))$, we let $Z_e = Z_{e_1} \times Z_{e_2}$ and $r_e = r_{e_1} \times r_{e_2}$, where if $e_i \in V(H_i)$  we set $Z_{e_i} := F^{H_i}_{e_i}$ and $r_{e_i}: F^{H_i}_{e_i} \hookrightarrow I^{H_i}_{e_i}$ to be the inclusion.
\end{itemize} 

We also define projections $\pi_i\colon H_1 \times H_2 \to H_i$  to be the graph projection combined with the smooth projections on each continuous mode.  The semiconjugacy conditions follow from the definition of $H_1 \times H_2$.

Now suppose that $K \in \hempty$ is another hybrid system with hybrid semiconjugacies $H_1 \xleftarrow{\alpha_1} K \xrightarrow{\alpha_2} H_2$. Then we can define $\beta\colon K \to H_1 \times H_2$ by setting $\beta(v) = (\alpha_1(v), \alpha_2(v))$ for each $v \in V(K)$, and $\beta(e) = (\alpha_1(e), \alpha_2(e))$ for each $e \in E(K)$. Then $\beta_v = {\alpha_1}_v\times {\alpha_2}_v$ is a continuous semiconjugacy for each $v \in V(K)$ and we get squares of the form
\begin{linenomath*}
\[
    \begin{tikzpicture}
        \node (x) at (0,0) {$Z^K_{e}$};
        \node (y) at (3,0) {$I^K_{\tgt(e)}$};
        \node (z) at (0,-2) {$Z^{H_1 \times H_2}_{\beta(e)}$};
        \node (w) at (3, -2) {$I^{H_1 \times H_2}_{\tgt(\beta(e))}$};
        
        \draw[->, above] (x) to node {$r_e$} (y);
        \draw[->, left] (x) to node {$\beta_{\src(e)}|_{Z^K_{e}}$} (z);
        \draw[->, right] (y) to node {$\beta_{\tgt(e)}$} (w);
        \draw[->, below] (z) to node {$r_{\beta(e)}$} (w);
    \end{tikzpicture},
\]
\end{linenomath*}
(replacing $Z_{\beta(e)}$ with $F_{\beta(e)}$ if $\beta(e)$ is a vertex) which commute because the component squares commute. Since the projections are defined by the usual projections of maps, $\beta$ is the unique hybrid semiconjugacy such that $\pi_i \circ \beta = \alpha_i$ . 

The coproduct in $\hs$ is given by the coproduct of graphs with the obvious association of data to vertices and edges of the coproduct.
\end{proof}

\begin{example}
\label{ex:time_trajectory_products}
To provide an explicit example, we construct the product of two hybrid time trajectories. If $\tau = ((\tau_i)^N_{i=0}, c)$ and  $\tau' = ((\tau_j')^{N'}_{j=0}, c')$ are hybrid time trajectories, then we can form the product $\tau \times \tau' \in \hs$. Thus $G(\tau \times \tau')$ is a directed grid with vertices $v_{ij}$ for $0 \leq i < N$ and $0 \leq j < N'$ along with edges $e_{ij}^{k\ell}$ with $\src(e_{ij}^{k\ell}) = v_{ij}$ and $\tgt(e_{ij}^{k\ell}) = v_{k\ell}$ where $(k,\ell) \in \{(i, j+1), (i+1, j), (i+1, j+1)\}$. 

The active set $I^{\tau \times \tau'}_{v_{ij}}$ is the product of intervals $[\tau_i, \tau_{i+1}] \times [\tau'_j, \tau'_{j+1}]$, with the appropriate endpoint conditions for $I^{\tau \times \tau'}_{v_{Nj}}$ and $I^{\tau \times \tau'}_{v_{iN'}}$; the flow set $F^{\tau \times \tau'}_{v_{ij}} = [\tau_i, \tau_{i+1}) \times [\tau'_j \times \tau'_{j+1})$ with vector field $X_{v_ij}^{\tau \times \tau'} = (\frac{d}{dt}, \frac{d}{dt})$; and the guards 
\begin{linenomath*}
\[
    Z_{e_{ij}^{k\ell}} = 
    \begin{cases}
        [\tau_i, \tau_{i+1}) \times \{\tau'_{j+1}\} & \text{if } (k, \ell) = (i, j+1)\\
        \{\tau_{i+1}\} \times [\tau'_j, \tau'_{j+1}) & \text{if } (k, \ell) = (i+1, j)\\
        \{\tau_{i+1}\} \times \{\tau'_{j+1}\} & \text{if } (k, \ell) = (i+1, j+1)
    \end{cases}
\]
\end{linenomath*}
with resets $r_{e_{ij}^{k\ell}}\colon Z_{e_{ij}^{k\ell}} \to I^{\tau \times \tau'}_{v_{k\ell}}$ sending
\begin{linenomath*}
\[
    (s,t) \mapsto
    \begin{cases}
        (s, r^{\tau'}_{e_j}(t)) & \text{if } (k, \ell) = (i, j+1)\\
        (r^{\tau}_{e_i}(s), t) & \text{if } (k, \ell) = (i+1, j)\\
        (r^{\tau}_{e_i}(s), r^{\tau'}_{e_j}(t)) & \text{if } (k, \ell) = (i+1, j+1).
    \end{cases}
\] 
\end{linenomath*}

Now we can easily see that the flow sets and guards are pairwise disjoint, so $\tau \times \tau'$ is a deterministic system. \qed
\end{example}

\begin{cor} \label{cor:dn_prod}
The category $\hsdn$ is cartesian and cocartesian.
\end{cor}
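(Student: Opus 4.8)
The plan is to leverage \Cref{prop:cartesian}: since $\hs$ already has finite products and coproducts, it suffices to check that the terminal object $\one$ and the initial (empty) object lie in $\hsdn$, and that $\hsdn$ is closed under the binary product and coproduct constructions given in the proof of \Cref{prop:cartesian}. The object $\one$ from \Cref{ex:initial_terminal_object} is deterministic (its only flow set is a point, carrying the unique constant integral curve defined for all time) and nonblocking (that constant curve is an infinite execution), while the empty system is vacuously in $\hsdn$, so both the unit and counit objects are present.

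For coproducts, closure is immediate: in $H_1 \sqcup H_2$ every continuous mode and every point of $I(H_1 \sqcup H_2)$ belongs to exactly one summand, so \Cref{def:deterministic} is inherited mode-by-mode, and given $x \in I(H_i)$ an infinite execution in $H_i$ composes with the coproduct injection (a hybrid embedding, hence execution-preserving) to give an infinite execution in $H_1 \sqcup H_2$. For products, I would first verify determinism. Condition (i) of \Cref{def:deterministic}: for a mode $v = (v_1, v_2)$ the sets $F_v$ and $\{Z_e : \src(e) = v\}$ are exactly the products $A_1 \times A_2$ with $A_i$ ranging over $\{F_{v_i}\} \cup \{Z_{e_i}: \src(e_i) = v_i\}$ (the factor $F_{v_i}$ being selected by the ``vertex slot'' $e_i = v_i$, and $F_v$ itself being the all-vertex choice). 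Two distinct such products differ in some coordinate $i$, where the two factor sets are disjoint by determinism of $H_i$; hence $(A_1 \cap A_1') \times (A_2 \cap A_2')$ is empty, giving condition (i). For condition (ii), an $X_v = X_{v_1} \times X_{v_2}$-integral curve in $F_v = F_{v_1} \times F_{v_2}$ is precisely a pair $(J_1, J_2)$ of integral curves of $X_{v_i}$ in $F_{v_i}$ on a common domain; if $J_i^\ast\colon [0, T_i) \to F_{v_i}$ is the unique maximal integral curve from $x_i$ guaranteed by determinism of $H_i$, then $(J_1^\ast, J_2^\ast)|_{[0, \min(T_1,T_2))}$ is readily checked to be the unique maximal integral curve from $(x_1, x_2)$.

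The substantive step is closure under products for the nonblocking property, which is where I expect the main difficulty. Given $x = (x_1, x_2) \in I(H_1 \times H_2)$ with $x_i \in I(H_i)$, choose infinite executions $\chi_i\colon \tau_i \to H_i$ from $x_i$ (using nonblocking of $H_i$). Their categorical product $\chi_1 \times \chi_2\colon \tau_1 \times \tau_2 \to H_1 \times H_2$ is again a hybrid semiconjugacy, but its domain is the directed grid of \Cref{ex:time_trajectory_products} rather than a hybrid time trajectory, so it is not yet an execution. I would therefore construct a hybrid time trajectory $\tau$ whose jump-time sequence is the merge of those of $\tau_1$ and $\tau_2$, together with a hybrid embedding $j\colon \tau \to \tau_1 \times \tau_2$ onto the monotone ``staircase'' through the grid, realized on each mode by the diagonal $t \mapsto (t,t)$; then $(\chi_1 \times \chi_2) \circ j\colon \tau \to H_1 \times H_2$ is an execution starting at $x$, and it is infinite because $\tau$ is (both factors have stop time $\infty$, so the merged trajectory does as well). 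The delicate points, and the crux of the argument, are defining the merge so that it correctly interleaves simultaneous jumps (diagonal grid edges), one-sided jumps (horizontal or vertical ``stay'' edges), and instantaneous successive jumps (empty-flow intervals), and checking that the diagonal map lands in the prescribed guard sets of \Cref{ex:time_trajectory_products} and makes the reset squares of \Cref{def:hyb_sem} commute; once $j$ is in hand, the rest is formal.
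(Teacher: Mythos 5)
Your proposal is correct and takes essentially the same approach as the paper: reduce to \Cref{prop:cartesian} and check that $\hsdn$ contains the terminal/initial objects and is closed under the product and coproduct constructions, with a determinism argument identical to the paper's (the identity expressing the product's flow and guard sets as products of factor flow/guard sets, plus the restriction $(J_1^\ast \times J_2^\ast)|_{[0,\min(T_1,T_2))}$ for maximal integral curves). The only divergence is that the paper dismisses nonblocking of the product as ``clear,'' whereas you correctly identify that the product of two infinite executions is indexed by the grid $\tau_1 \times \tau_2$ of \Cref{ex:time_trajectory_products} rather than a hybrid time trajectory, and your merged-jump-time trajectory with its diagonal ``staircase'' embedding is a valid way to fill in that omitted step.
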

\begin{proof}
 Let $H_1, H_2 \in \hsdn$ be deterministic, nonblocking hybrid systems.  It is clear that their $\hempty$-product $H:= H_1 \times H_2$ is nonblocking.  
 To check determinism, first we note that the disjointness of guards and flow sets condition of \Cref{def:deterministic} follows from identity 
 \begin{linenomath*}
 \[
 \{F_v\}_{v \in V(H)} \cup \{Z_e\}_{e \in E(H)} = 
 \{ U_1 \times U_2 \mid U_i \in \{F_v\}_{v \in V(H_i)} \cup \{Z_e\}_{e \in E(H_i)} \}
 \]
 \end{linenomath*}
and the disjointness of the corresponding guard and flow sets for $H_1, H_2$.   
To construct the unique maximal $X_{(v_1,v_2)}$-integral curve starting at any point $(x_1, x_2) \in F_v^{H_1} \times F_v^{H_2}$ for any $v_i \in H_i$, 
first let $J_i\colon [0,T_i) \to F_{v_i}^{H_i}$ be the unique maximal $X_{v_i}$-integral curves (which must exist by the determinism of each $H_i$).
Then $(J_1 \times J_2)_{|[0, \min(T_1, T_2))}$ is the unique maximal $X_{(v_1,v_2)}$-integral curve starting at $(x_1, x_2)$.

Since the pruning functor $P \colon \hempty \to \hs$ of \Cref{lem:hempty} preserves executions and the disjointness of guards and flow sets condition, it follows that $\hsdn$ is cartesian. The cocartesian proof is straightforward.
\end{proof}

Lastly, the following lemma provides a convenient way to reason about points of deterministic hybrid systems using semiconjugacies.

\begin{lemma}
\label{lem:representability}
Let $U\colon \tops \to \sets$ be the forgetful functor. The functor $U \circ I\colon \hs \to \sets$ is represented by the hybrid system $N \in \hs$ where $G(N) = {\bullet}$, $M_{\bullet} = I_{\bullet} = *$, and $F_\bullet = \emptyset$.
\end{lemma}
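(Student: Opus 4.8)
The plan is to exhibit a natural isomorphism $\Hom_{\hs}(N, -) \cong U \circ I$ by unwinding what a hybrid semiconjugacy out of $N$ can be. Fix $H \in \hs$. By \Cref{def:hyb_sem}, a morphism $\alpha\colon N \to H$ consists of a graph morphism $G(\alpha)\colon G(N) \to G(H)$ together with a smooth map $\alpha_\bullet\colon I_\bullet^N \to I^H_{\alpha(\bullet)}$ satisfying conditions (1) and (2). Since $G(N)$ is the single-vertex graph $\{\bullet\}$ with no edges, the graph morphism amounts to a single choice of vertex $v := \alpha(\bullet) \in V(H)$, and there are no reset edges of $N$ over which to impose condition (2), so that condition is vacuous. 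Likewise, because $F_\bullet^N = \emptyset$, both the inclusion $\alpha_\bullet(F_\bullet^N) \subset F^H_v$ and the smooth-semiconjugacy requirement in condition (1) hold trivially. Thus the only genuine datum is the smooth map $\alpha_\bullet$.

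Next I would identify this smooth map with a point of $I_v^H$. The source smooth set is $(I_\bullet^N, M_\bullet^N) = (\{\ast\}, \{\ast\})$, whose ambient manifold is the one-point (zero-dimensional) manifold. By \Cref{def:smooth_set}, a representative of $\alpha_\bullet$ is a smooth map defined on an open neighborhood of $\{\ast\}$ in $\{\ast\}$—necessarily all of $\{\ast\}$—carrying $\ast$ into $I_v^H$. Such a map is determined exactly by its value $\alpha_\bullet(\ast) \in I_v^H$, is automatically smooth, and since $\{\ast\}$ is its own only neighborhood the germ-equivalence relation collapses to equality of values. Hence $\alpha \mapsto \alpha_\bullet(\ast) \in I_v^H \subset I(H)$ defines a bijection $\Hom_{\hs}(N, H) \xrightarrow{\sim} U(I(H))$, with inverse sending $x \in I_v^H$ to the semiconjugacy determined by $\bullet \mapsto v$ and $\ast \mapsto x$. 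I would also record here that $N$ indeed lies in $\hs$ rather than merely $\hempty$: its single active set $\{\ast\}$ is nonempty and it has no guard sets, so the no-empty-active-set-or-guard-set hypothesis is satisfied.

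Finally I would check naturality. For $f\colon H \to K$ in $\hs$, postcomposition $\alpha \mapsto f \circ \alpha$ sends the semiconjugacy picking out $x \in I_v^H$ to the one picking out $f_v(x) \in I_{f(v)}^K$, which is precisely $U(I(f))(\alpha_\bullet(\ast))$ by the definition of $I$ on morphisms (\Cref{def:hybrid_state_spaces}). So the bijections assemble into a natural isomorphism $\Hom_{\hs}(N,-) \cong U \circ I$, establishing representability. The entire argument is a sequence of definition-unwindings; the one step requiring genuine care is the identification of smooth maps out of the point smooth set with honest points of $I_v^H$, and everything else is a vacuity check forced by the choices $E(N) = \emptyset$ and $F_\bullet = \emptyset$, which is exactly why the representing object is defined with an empty flow set and no edges.
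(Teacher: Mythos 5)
Your proof is correct: unwinding the definitions to identify $\Hom_{\hs}(N,H)$ with $\bigsqcup_{v} I_v^H = U(I(H))$, the vacuity of conditions (1) and (2) of \Cref{def:hyb_sem} forced by $E(N)=\emptyset$ and $F_\bullet=\emptyset$, the collapse of germ-equivalence over the one-point ambient manifold, and the naturality check under postcomposition are all sound, and the observation that $N$ lies in $\hs$ (nonempty active set, no guards) is a worthwhile detail. The paper states this lemma without any proof at all, so your argument simply supplies the definition-unwinding the authors regarded as immediate; it is the canonical (and essentially the only) argument for this statement.
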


\section{Hierarchical composition}  \label{sec:hierarchical}
In this section, we define hierarchical compositions of hybrid systems via fiber products of template-anchor pairs.  To construct a template-anchor pair, one starts with (i) a template model describing the essential dynamics of the system and (ii) a more detailed anchor model of the same system  \cite{Full_Koditschek_1999}.  We would like to compute using template models but also have a guarantee that the template-based computations also apply to the more detailed anchor model.  That is, a template-anchor pair should form a simple hierarchy in which the dynamics of the template approximately determine the dynamics of the anchor.  To accomplish this, one embeds of the template model into the anchor model, such that the image of the embedding is attracting in the sense that it is an isolated positively invariant set, as described below. Conceptually, this is analogous to the setting of  \cite{Kvalheim_Revzen_2016} where normally hyperbolic invariant manifolds are explored for classical systems.   

\subsection{Attracting sets}
Before defining any notion of attracting set, we begin with a basic property: positive invariance.

\begin{defn}
Let $H \in \hs$ be a hybrid system.  We say that a set $A \subset I(H)$ is {\bf positively invariant} if $\chi(t) \subset A$ for all $x \in A$, $\chi \in \infexc_H(x)$, and $t \geq 0$. 
\end{defn}

The following definition is a generalization of Hurley's definition of attractor for a flow on a compact manifold \cite{hurley1982attractors} and a definition due to Milnor in the discrete case \cite{milnor1985concept}.    

\begin{defn}\label{def:iso_inv}
Let $H$ be a hybrid system, and let $A \subset I(H)$ be positively invariant.  A (forward) {\bf isolating neighborhood} $N \subset I(H)$ for $A$ is a neighborhood of $A$ such that
\begin{linenomath*}
\[
    A = \bigcap_{t \geq 0} \left\{\chi(t) \mid x \in N \text{ and } \chi \in \infexc_H(x)\right\}.
\]
\end{linenomath*}
If such an $N$ exists, we call $A$ an {\bf attracting set}.
\end{defn}

Trapping regions are particularly well-behaved examples of isolating neighborhoods (as shown below). For a continuous system, a trapping region is a positively invariant subset of the state space whose closure flows into its interior in uniform finite time. The follow hybridization of this idea is a faithful generalization, recovering the continuous notion if the hybrid system $H$ has a single continuous mode $v$ and no resets.  

\begin{defn} \label{def:trapping}
A {\bf hybrid trapping region} for a hybrid system $H$ is a topological subspace $W \subset I(H)$ such that
\begin{enumerate}
    \item for all $w \in W$, the set $\infexc_H(w)$ is nonempty; 
    \item $W$ is positively invariant; and
    \item there exists a finite time $T > 0$ such that for each $x \in \overline{W}$ and $\chi \in \infexc_H(x)$, we have $\chi(t) \subset W^\circ$ for all $t \geq T$.
\end{enumerate}
\end{defn}

\begin{remark}
To recover the notion of trapping region for the discrete dynamical system determined by a function $f$ on a manifold $X$, we can define a hybrid system $H$ whose graph is a path $(v_i)_{i=1}^\infty$ and $M_{v_i} = I_{v_i} = X \times [i-1, i]$, $F_{v_i} = X \times [i-1, i)$, $Z_{e_i} = X \times \{i\}$, and $r_{e_i} = f \times \id$ where $e_i$ is the reset from $v_i$ to $v_{i+1}$.   The vector field on each flow set is given by $X_{v_i} = 0 \times \frac{d}{dt}$. Then a trapping region $W \subset X$ for the discrete system $(f,X)$ corresponds to a trapping region $\bigcup_i W \times [i, i+1] \subset I(H)$. 
\end{remark}

\begin{defn}
Let $H$ be a hybrid system, and let $W \subset I(H)$ be a trapping region.  The {\bf trapped attracting set} for $W$ is the set $A \subset I(H)$ given by
\begin{linenomath*}
\[
    A = \bigcap_{t \geq 0} \left\{\chi(t) \mid x \in W \text{ and } \chi \in \infexc_H(x)\right\}.
\]
\end{linenomath*}
\end{defn}

\begin{prop}
Let $H \in \hs$ be a hybrid system. Let $A \subset I(H)$ be the trapped attracting set for a trapping region $W \subset I(H)$.  Then $A$ is positively invariant. 
In particular, $A$ is an attracting set with isolating neighborhood $W$.
\end{prop}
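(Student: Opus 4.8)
The plan is to set $A_t := \bigcup_{x \in W,\ \chi \in \infexc_H(x)} \chi(t)$, so that by definition $A = \bigcap_{t \ge 0} A_t$. The first condition of \Cref{def:trapping} guarantees $\infexc_H(x) \ne \emptyset$ for each $x \in W$, and since an infinite execution is defined at every time, each $A_t$ is an honest time-$t$ reachable set, with $A_0 = W$. The whole argument then rests on two surgeries on executions — passing to a suffix and forming a concatenation — combined with the three defining properties of the trapping region $W$.

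First I would show that $\{A_t\}_{t \ge 0}$ is nonincreasing: if $0 \le t_1 \le t_2$, then $A_{t_2} \subseteq A_{t_1}$. Given $p \in A_{t_2}$, pick $x \in W$ and $\chi \in \infexc_H(x)$ with $p \in \chi(t_2)$, and let $b$ be the point of $\chi$ at time $t_2 - t_1$ on the segment of the time trajectory leading to $p$. Positive invariance of $W$ (the second condition of \Cref{def:trapping}) gives $b \in W$, and the suffix of $\chi$ based at $b$ is an infinite execution from a point of $W$ reaching $p$ at time $t_1$, so $p \in A_{t_1}$. Next, the third condition of \Cref{def:trapping} gives a time $T$ with $\chi(t) \subset W^\circ$ for every $x \in W \subseteq \overline W$ and $t \ge T$; hence $A_t \subseteq W^\circ$ for $t \ge T$, and therefore $A \subseteq A_T \subseteq W^\circ$. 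This already shows that $W$ is a neighborhood of $A$.

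The core of the proof is positive invariance of $A$. Fix $a \in A$, an execution $\psi \in \infexc_H(a)$, and $s \ge 0$; I must show that every $q \in \psi(s)$ lies in $A_r$ for all $r \ge 0$. For $r \ge s$, since $a \in A \subseteq A_{r-s}$ there is an infinite execution $\chi$ from some $x \in W$ reaching $a$ at time $r - s$; truncating $\chi$ to a prefix ending at $a$ and concatenating $\psi$ at that junction produces an infinite execution from $x \in W$ passing through $q$ at time $(r-s) + s = r$, so $q \in A_r$. For $0 \le r < s$, the monotonicity established above gives $q \in A_s \subseteq A_r$. Thus $q \in \bigcap_{r \ge 0} A_r = A$, so $\psi(s) \subseteq A$ and $A$ is positively invariant. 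The ``in particular'' statement is then immediate: $A$ is positively invariant, $W$ is a neighborhood of $A$, and the defining identity $A = \bigcap_{t \ge 0}\{\chi(t) \mid x \in W,\ \chi \in \infexc_H(x)\}$ is exactly the condition in \Cref{def:iso_inv} with $N = W$, so $W$ is an isolating neighborhood and $A$ is an attracting set.

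I expect the main obstacle to lie in making the two execution surgeries rigorous — verifying that the suffix and the concatenation are genuinely hybrid semiconjugacies out of hybrid time trajectories. The delicate point is the reset square of \Cref{def:hyb_sem} at the concatenation junction: the gluing is realized by a trivial reset when $a$ is a flow point (so that the junction lands in a flow set) and by the first genuine reset of $\psi$ when $\psi$ jumps immediately at $a$, and it is precisely the allowance of trivial resets in \Cref{def:execution} that makes this legitimate. Because we only ever prepend a finite initial segment to $\psi$, Zeno behavior requires no special treatment.
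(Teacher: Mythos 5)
Your proof is correct, and while it rests on the same two execution surgeries that drive the paper's argument, it is organized differently and is in fact more complete than the paper's own proof. The paper argues in a single pass: given $u \in \chi_x(s)$ and an arbitrary time $t$, it takes an infinite execution $\chi_y$ from a point $y \in W$ with $x \in \chi_y(t)$, re-bases it at a time-$s$ point $z := \chi_y(s_0) \in W$ (using positive invariance of $W$), and asserts that this suffix reaches $u$ at its time $t$; but that assertion amounts to $u \in \chi_y(t+s)$, which only holds if $\chi_y$ has first been extended through $x$ along $\chi_x$ --- precisely the concatenation you construct, and which the paper leaves implicit rather than building. Your decomposition (monotonicity of the reachable sets $A_t$ via the suffix surgery; concatenation for $r \geq s$; monotonicity for $r < s$) makes both surgeries explicit and separates their roles, and your treatment of the junction --- a trivial reset when $a$ lies in a flow set, the first genuine reset of $\psi$ when it jumps immediately, exactly as permitted by \Cref{def:hyb_sem} and \Cref{def:execution} --- is the detail that makes the concatenation a legitimate execution. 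You also verify that $W$ is a neighborhood of $A$ (condition (3) of \Cref{def:trapping} gives $A \subseteq A_T \subseteq W^\circ$), which \Cref{def:iso_inv} requires for the ``in particular'' clause but which the paper's proof never checks. In short, both proofs use suffixes (enabled by positive invariance of $W$) and concatenation through points of $A$ (enabled by trivial resets); yours supplies the justifications the paper elides.
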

\begin{proof}
Let $x \in A$ and $s \geq 0$.  We want to show that if $\chi_x \in \infexc(x)$, then $\chi_x(s) \subset A$.

Let $u \in \chi_x(s)$.  Then for all $t \geq 0$, there exist $y \in W$ and $\chi_y \in \infexc(y)$ such that $x \in \chi_y(t)$.
Let $s_0 \in \tau(s)$.  Then $z := \chi_y(s_0)$ lies in $W$ by the positive invariance of $W$.   Let $\xi \in \infexc_\tau(s_0)$ be the unique fundamental infinite execution of $s_0$ in $\tau$, and let $\xi \colon \sigma \to \tau$ be a corresponding semiconjugacy.  Then $\chi_y \circ \xi \in \infexc_H(z)$, and $u \in (\chi_y \circ \xi)(t)$.   Thus, $u \in A$.  Therefore, $\chi_x(s) \subset A$.
\end{proof}

In both the discrete and continuous case, every compact attracting set is actually a trapped attracting set for a trapping region \cite{hurley1982attractors, Milnor:2006}.  As the following example shows, this is not true for hybrid systems.

\begin{example} \label{ex:trap}
Let $\mathbb{S}^1$ be the unit circle embedded in $\mathbb{R}^2$ with center at $(0,1)$. Then let $H \in \hsdn$ be the hybrid system given by
\begin{itemize}
    \item $G = \overset{v}{\bullet} \overset{e}{\to} \overset{w}{\bullet}$
    \item $(M_v, X_v) = (\mathbb R, (x+1)(2 - x) \frac{d}{dx})$
    \item $I_v = [-1, 2]$
    \item $F_v = I_v \setminus \{0\}$
    \item $Z_e = \{0\}$
    \item $M_w = I_w = F_w = \mathbb{S}^1$
    \item $X_w = -\nabla h$ where $h\colon \mathbb{S}^1 \to \mathbb{R}$ is given by $h(x,y) = y$
    \item $r_e(0) = (1,1)$
\end{itemize}
Then $(-1,1) \cup \mathbb{S}^1$ is an isolating neighborhood for the attracting set $A := (-1, 0] \cup \mathbb{S}^1$, but there is no positively invariant isolating neighborhood for $A$.  In particular, there is no trapping region for $A$. \qed
\end{example}

The preceding example also demonstrates the necessity of the positive invariance condition in \Cref{def:iso_inv}.  Since $\chi(0) = \{0, y\}$ for each $\chi \in \infexc_H(0)$, we have
\begin{linenomath*}
\[
    \{0, y\} = \bigcap_{t \geq 0} \left\{\chi(t) \mid x \in (-1,1) \text{ and } \chi \in \infexc_H(x)\right\},
\]
\end{linenomath*}
but the set $\{0, y\}$ is not positively invariant. 

\subsection{Templates and anchors via spans}
\label{sec:templates_anchors}

In the continuous case, the underlying data of a template-anchor pair is simply an embedding of the template system into the anchor system.  In the hybrid case, one often wants template embeddings to map smooth systems to hybrid systems with nontrivial discrete dynamics.  This type of template-anchor relationship does not constitute a simple hybrid semiconjugacy, but rather a hybrid semiconjugacy from a subdivision of the template system into the anchor system. Here we develop the necessary machinery to allow for this flexibility in representing hybrid templates and anchors.

\subsubsection{Subdivisions}

To define subdivisions of arbitrary hybrid systems, we begin by defining subdivisions of hybrid time trajectories, which we call ``refinements.''

\begin{defn}
A {\bf refinement} of a hybrid time trajectory $\tau$ is a hybrid surjective submersion $\sigma \to \tau$ where $\sigma$ is also a hybrid time trajectory.
\end{defn}

\begin{example}
Let $\chi\colon \tau \to H$ be any execution. Let $\chi^*\colon \tau^* \to H$ be the associated fundamental execution.  Then there exists a refinement $p\colon \tau \to \tau^*$ such that $\chi^* \circ p = \chi$. \qed
\end{example}

The following lemma shows that refinement preserves the time $t$ points of a hybrid time trajectory. 

\begin{lemma}
\label{lem:ss_time}
If $\xi\colon \sigma \to \tau$ is a refinement, then $\tau(t) = \xi(t)$ for all $t$.
\end{lemma}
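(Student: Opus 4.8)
The plan is to show that the refinement $\xi$ preserves the underlying real-time coordinate on every continuous mode, after which both inclusions defining $\tau(t) = \xi(t)$ become immediate. Throughout I read $\xi(t)$ as the image $I(\xi)(\sigma(t))$ of the time-$t$ points of $\sigma$, in keeping with the image convention of \Cref{def:hybrid_state_spaces} and the usage of $\chi(t)$ for executions. Write the vertices of $\sigma$ as $(w_k)$ and those of $\tau$ as $(v_j)$, so that each continuous mode carries the vector field $\frac{d}{dt}$ and the active and flow sets are the intervals prescribed in \Cref{def:htt}.

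First I would pin down the combinatorics of the graph morphism $G(\xi)$. Because $\sigma$ and $\tau$ are directed paths and $\dgraph$-morphisms may send an edge to a vertex, the source/target equations force $\xi_V(w_{k+1})$ to equal either $\xi_V(w_k)$ (when the edge $d_k$ is sent to a vertex, a trivial reset) or its successor (when $d_k$ is sent to an edge). Hence $k \mapsto \xi_V(w_k)$ is monotone, increasing by $0$ or $1$; since a surjective submersion is in particular a graph epimorphism, $\xi_V$ is onto, so $w_0 \mapsto v_0$ and the fibers $\xi_V^{-1}(v_j)$ are contiguous blocks. On each mode, condition (1) of \Cref{def:hyb_sem} makes $\xi_{w_k}$ a semiconjugacy between copies of $(\mathbb R, \frac{d}{dt})$, which forces its representative to be a translation $s \mapsto s + c_k$.

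Next I would show every $c_k = 0$. Inside a single block the reset squares of \Cref{def:hyb_sem} for the trivial resets (whose bottom edge is an inclusion of flow sets) commute, and unwinding them with the identity-induced resets of \Cref{def:htt} yields $c_{k+1} = c_k$; the analogous square for a genuine reset between two blocks gives the same equality across the block boundary. Thus all the $c_k$ share one value $c$. To see $c = 0$ I would invoke the covering condition (iii) of a hybrid surjective submersion at $v_0$: the images $\bigcup_k [\sigma_k + c, \sigma_{k+1}+c]$ of the first block telescope to an interval with left endpoint $c$, which must equal $I_{v_0} = [0,\tau_1]$, forcing $c = 0$. Therefore each $\xi_{w_k}$ is the identity on its interval.

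With time preserved, both inclusions are routine: a point $(w_k, t) \in \sigma(t)$ maps to $(\xi(w_k), t)$, whose coordinate is $t$, giving $\xi(t) \subseteq \tau(t)$; conversely any $(v_j, t) \in \tau(t)$ has $t \in I_{v_j} = \bigcup_{w \in \xi^{-1}(v_j)} \xi_w(I_w)$ by covering, so $t \in I_w$ for some $w$ and $(w,t) \in \sigma(t)$ maps onto it, giving $\tau(t) \subseteq \xi(t)$. I expect the only real work to be the middle step --- combining the semiconjugacy constraint (translation), the commuting reset squares (equal constants), and the surjectivity/covering condition (the base case $c = 0$) --- since the final set equality is a direct consequence. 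Degenerate cases (an empty final active set when $c_\tau$ is open, or $N = \infty$) are absorbed by running the constant-propagation argument as an ordinary induction on $k$.
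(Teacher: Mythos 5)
Your proof is correct and follows essentially the same route as the paper's: both arguments reduce the lemma to showing that $\xi$ preserves the real-time coordinate on every active set, using the same three ingredients --- the flow-set semiconjugacy condition forcing each $\xi_{w_k}$ to act as a time-translation, the commuting reset squares together with the identity-induced resets of hybrid time trajectories to propagate that translation across modes, and surjectivity (the covering condition) to anchor the translation at zero and to supply the reverse inclusion. The only difference is organizational --- the paper pins the starting point of $\sigma$ to the starting point of $\tau$ first and then inducts along the modes, whereas you propagate an unknown constant $c$ through all modes and fix $c=0$ at the end --- a cosmetic reordering of the same ideas.
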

\begin{proof}
Since $I(\xi)$ is surjective, it suffices to show that $\xi(t) \subset \tau(t)$ for all $t$.    Let $t \geq 0$ and $x \in \sigma(t)$.  Let $(v_i)_{i=1}^N$  and $(e_i)_{i=1}^{N-1}$ be the sequences of vertices and edges for the path $G(\sigma)$, respectively.  Let $v_i$ be the continuous mode such that $x \in I_{v_i}$.  
We proceed by induction on $i$.

Since $I(\xi)$ is surjective, it must map the starting point of $\sigma$ to the starting point of $\tau$.   It follows from the continuous semiconjugacy condition that $\xi(x) \in \tau(t)$ for all $x \in I_{v_1}$.

Now suppose that $\xi(x) \in \tau(t)$ if $x \in I_{v_i}$ for some $i$.  We want to show that the same thing holds if $x \in I_{v_{i+1}}$.  If $x$ is the left endpoint of $I_{v_{i+1}}$, then $y := r_{e_i}^{-1}(x) \in \sigma(t)$.   By the induction hypothesis, $\xi(y) \in \tau(t)$.   Moreover, we know that $\xi(r_{e_i})$ is either a reset map or an identity map for a continuous mode of $\tau$.  Thus, $\xi(x) = \xi(r_{e_i}(y)) \in \sigma(t)$. Using the continuous semiconjugacy condition, it follows that $\xi(x) \in \tau(t)$ for all $x \in I_{v_{i+1}}$.
\end{proof}

The next proposition, which follows directly from the definitions, justifies the choice of the word ``refinement.'' 

\begin{prop}
Let $\tau = ((\tau_j)_{j = 0}^N, c_\tau)$ be a hybrid time trajectory.  There exists a bijection between refinements $\sigma \to \tau$ and pairs $(\sigma, k)$ where $\sigma = ((\sigma_j)_{j=0}^M, c_\sigma)$ is a hybrid time trajectory and $k\colon \{0, 1, \ldots, N\} \to \{0, 1, \ldots, M\}$ is an increasing map such that
\begin{itemize}
    \item $k(0) = 0$, 
    \item $k(N) = M$ if $N < \infty$,
    \item $\tau_j = \sigma_{k(j)}$ for $0 \le j \le N$, and
    \item $c_\tau = c_\sigma$.
\end{itemize}
\end{prop}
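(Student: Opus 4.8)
The plan is to show that a refinement is rigidly determined by its underlying graph morphism, after which the statement becomes a purely combinatorial matching of that graph morphism against the data $(\sigma, k)$. The first step is to pin down the continuous data using \Cref{lem:ss_time}: since a refinement $p\colon \sigma \to \tau$ preserves time-$t$ points, each component map $p_{w_i}\colon I_{w_i} \to I_{p(w_i)}$ must be the identity on the time coordinate. Indeed, both active sets are intervals in $\mathbb{R}$ carrying the field $\frac{d}{dt}$, and a submersive semiconjugacy that fixes every $t$ is forced to be the germ of $t \mapsto t$. Consequently a refinement carries no information beyond the choice of $\sigma$ together with its underlying graph morphism, so everything reduces to analyzing graph morphisms of directed paths.

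Next I would analyze the graph morphism $p\colon G(\sigma) \to G(\tau)$. Writing $(w_i)_{i=0}^{M-1}$ and $(v_j)_{j=0}^{N-1}$ for the vertex sequences, the source/target equations force $p_V$ to be order-preserving with consecutive images either equal or differing by one index, and they force each edge $f_i$ to map to $e_j$ when $p_V(w_i) = v_j \neq v_{j+1} = p_V(w_{i+1})$ and to the vertex $p_V(w_i)$ otherwise. The surjective-submersion hypotheses, namely that $p$ is an epimorphism in $\dgraph$ together with the covering condition $I_{v_j} = \bigcup_{p(w)=v_j} p_w(I_w)$, say exactly that the fibers $p^{-1}(v_j)$ are nonempty contiguous blocks whose associated intervals $[\sigma_i, \sigma_{i+1}]$ tile $[\tau_j, \tau_{j+1}]$. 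I would then define $k(j)$ to be the least index $i$ with $p_V(w_i) = v_j$; time-preservation gives $\tau_j = \sigma_{k(j)}$, the constraint $w_0 \mapsto v_0$ gives $k(0) = 0$, surjectivity and the covering condition at the right end give $k(N) = M$ when $N < \infty$, and the matching behavior of the two final (possibly half-open) intervals gives $c_\tau = c_\sigma$.

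For the converse I would begin with a pair $(\sigma, k)$ satisfying the four conditions, set $p_V(w_i) = v_j$ for the unique $j$ with $k(j) \le i < k(j+1)$, send $f_i$ to $e_j$ if $i+1 = k(j+1)$ and to $v_j$ otherwise, and take each $p_{w_i}$ to be the time-identity. The monotonicity and boundary conditions on $k$ make this a well-defined graph morphism that is a submersion on each mode (identities are submersions), an epimorphism, and a morphism satisfying the covering condition, since the intervals $[\sigma_i,\sigma_{i+1}]$ with $k(j) \le i < k(j+1)$ tile $[\sigma_{k(j)},\sigma_{k(j+1)}] = [\tau_j,\tau_{j+1}]$; the reset squares commute automatically because every component map is a time-identity. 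Thus $p$ is a refinement, and the two assignments are visibly mutually inverse once one observes that $k$ records precisely the block boundaries of the fibers $p^{-1}(v_j)$.

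The main obstacle is bookkeeping rather than mathematics: keeping the indexing of the $N+1$ time points straight against the $N$ vertices, and correctly handling the boundary cases—degenerate intervals with $\tau_j = \tau_{j+1}$, the distinction between finite and infinite $N$ (where $k(N) = M$ is asserted only in the finite case), and the role of the endpoint type in matching the two terminal active sets. The one place requiring care beyond formal definition-chasing is verifying that the covering condition forces the fiber intervals to tile $[\tau_j,\tau_{j+1}]$ exactly rather than merely cover it, which is where contiguity of the fibers and time-preservation must be combined.
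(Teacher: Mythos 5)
The paper gives no proof of this proposition at all---it is asserted to ``follow directly from the definitions''---so the real question is whether your unpacking is sound. Your combinatorial skeleton (the source/target analysis forcing $p_V$ to be monotone with steps $0$ or $1$, edges forced by vertices, contiguous nonempty fibers, tiling via the covering condition) is exactly the intended content. But two specific steps are not merely unjustified: they are false in the paper's own framework, and each corresponds to an actual failure of the claimed bijection. The first is the rigidity claim that ``a submersive semiconjugacy that fixes every $t$ is forced to be the germ of $t \mapsto t$.'' An $\hs$-morphism carries germ data (\Cref{def:smooth_set}); \Cref{lem:ss_time} pins down only the underlying set maps, and the flow-semiconjugacy condition pins the germ only on a neighborhood of $F_{w_i} = [\sigma_i, \sigma_{i+1})$, not of the closed active set. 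For interior modes the reset square---which is an equality of morphisms in $\sset$, i.e.\ of germs at the one-point guard---does propagate the identity germ across each jump, a step your proof would need to spell out; but for the final mode with $c_\sigma = \text{closed}$ nothing constrains the germ at $\sigma_M$ from the right. Concretely, take $\tau = \sigma = ((0,1), \text{closed})$ and compare the identity refinement with the one whose component map is $t \mapsto t + g(t)$, where $g$ is smooth, vanishes for $t \le 1$, and is positive for $t > 1$: both satisfy every condition of a refinement, they are not germ-equivalent on $[0,1]$, and they induce the same pair $(\sigma, k)$. So the correspondence is not injective---this is precisely the non-injectivity phenomenon the paper itself concedes for $e_H$ after \Cref{thm:push_forward_executions}.

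The second gap is your claim that ``surjectivity and the covering condition at the right end give $k(N) = M$ when $N < \infty$.'' This fails when $c_\tau = \text{open}$: the finite trajectory $\tau = ((0,1), \text{open})$ is refined by the Zeno trajectory $\sigma$ with $\sigma_i = 1 - 2^{-i}$ and $M = \infty$, with every vertex and edge of $G(\sigma)$ mapped to the unique vertex $v_0$ of $G(\tau)$. Each trivial-reset condition $\sigma_{i+1} \in F_{v_0} = [0,1)$ holds, and the covering condition holds because $\bigcup_i [\sigma_i, \sigma_{i+1}] = [0,1) = I_{v_0}$; so this is a legitimate refinement, yet it corresponds to no admissible pair $(\sigma, k)$, since $\sigma_{k(1)} = \tau_1 = 1$ is impossible. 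Hence the natural map from refinements to pairs is not even well defined on all refinements. Both failures are really defects of the proposition as stated, so no proof can close them; what your argument does correctly establish is the intended statement---the bijection holds if refinements are read at the level of underlying set maps (equivalently, with all component maps taken to be identity germs) and one restricts to refinements with finitely many modes over each mode of $\tau$. As a proof of the literal statement, however, these two steps are genuine gaps, and it is worth flagging them rather than reproducing the paper's silence.
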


\begin{defn}\label{def:hyb_sub}
A {\em \bf hybrid subdivision} of a hybrid system $H \in \hs$ consists of a hybrid system $S \in \hs$ and a hybrid submersion $p \colon S \to H$ such that for every hybrid execution $\chi \colon \tau \to H$ there exists a refinement $\xi \colon \sigma \to \tau$ and an execution $\widetilde{\chi} \colon \sigma \to S$ such that 
\begin{linenomath*}
\[
    \begin{tikzpicture}
        \node (x) at (0,0) {$\sigma$};
        \node (y) at (3,0) {$S$};
        \node (z) at (0,-2) {$\tau$};
        \node (w) at (3, -2) {$H$};
        
        \draw +(.25,-.75) -- +(.75,-.75)  -- +(.75,-.25);

        \draw[->, above] (x) to node {$\widetilde{\chi}$} (y);
        \draw[->, left] (x) to node {$\xi$} (z);
        \draw[->, right] (y) to node {$p$} (w);
        \draw[->, below] (z) to node {$\chi$} (w);
    \end{tikzpicture}
\]
\end{linenomath*}
is cartesian ($\sigma$ is a fiber product).
\end{defn}

Informally, a hybrid subdivision of a system $H$ can split a continuous mode of $H$ into multiple modes connected with resets to preserve the dynamics. For example, to formalize the relationship between a periodic continuous orbit and a hybrid periodic orbit, it is useful to model the relationship between a periodic flow on $\mathbb S^1$ and the periodic hybrid dynamics on a broken $\mathbb S^1$ connected by resets.

\begin{example} \label{ex:slice}
Slicing a continuous dynamical system via a codimension-$1$ submanifold transverse to the flow gives a subdivision, assuming no fundamental Zeno executions appear in the subdivided system.  Simply duplicate the boundary, and define a new reset from the copy of the boundary with outward pointing vector to the other copy via the identity function on the boundary. Then executions pull back with boundary points doubled---this gives a refinement with an extra jump point at every place where the execution hits the submanifold. \qed
\end{example}

\begin{example}
For a discrete system ({\em i.e.}, a hybrid system whose flow sets are all empty), subdivision of continuous modes is even simpler. To subdivide a mode, pick a partition $\{I_{v,j}\}_j$ of $I_v$  and define $M_{v,j} = M_v$. The new guards are the intersections of the guard sets in $I_v$ with each $I_{v,j}$.  The reset maps into and out of each $I_{v,j}$ are the restrictions of the corresponding reset maps into and out of $I_v$. 
\end{example}

\begin{example}
We can also sequentially subdivide reset maps. Let $H$ be a hybrid system with a reset map $r_e\colon Z_e \to I_{w}$ for some edge $e\colon v \to w$.  Let $f\colon Z_e \to A$ and $g\colon A \to I_{w}$ be maps such that $r_e = g \circ f$, where $A$ is any subset of a manifold $M$.

We define $S$ to be the hybrid system given by replacing the edge $e$ with two edges $ v \xrightarrow{e_f} u$ and $u \xrightarrow{e_g} w$ where $u$ is a new node with $(M_u, X_u) = (M,0)$, $Z_u = I_u = A$, and $F_u = \emptyset$.  Then $p\colon S \to H$ defined by $p_u = g$ and the identity on the other vertices is a subdivision.
\end{example}

\begin{example}
A subdivision of a deterministic system is not necessarily deterministic in the sense of \Cref{def:deterministic}. For example, let $H$ be the smooth system $(\mathbb{R}, \frac{d}{dt})$.  Let $S$ be the system given by the graph $G(S) = \overset{v}{\bullet} \overset{e}{\to} \overset{w}{\bullet}$ where $(M_v, X_v) = (\mathbb{R}, \frac{d}{dt}) = (M_w, X_w)$, $F_v = (-\infty, 0]$, $Z_e = \{0\}$, and $F_w = [0, \infty)$.    Then the semiconjugacy $p\colon S \to H$ induced by the identity map on $\mathbb{R}$ is a subdivision, but $S$ is not deterministic in the sense of \Cref{def:deterministic}. \qed
\end{example}

\begin{prop} \label{prop:simp_hyb}
Every refinement is a hybrid subdivision. 
\end{prop}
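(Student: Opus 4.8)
The plan is to verify the two requirements of \Cref{def:hyb_sub} directly for a given refinement $\rho\colon S \to H$. First, a refinement is by definition a hybrid surjective submersion, hence in particular a hybrid submersion, so the underlying map $\rho$ already has the right type. It then remains to produce, for every execution $\chi\colon \tau \to H$, a refinement $\xi\colon \sigma \to \tau$ together with an execution $\tilde\chi\colon \sigma \to S$ fitting into a cartesian square over $\chi$ and $\rho$.

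The natural candidate for $\sigma$ is the fiber product $S \times_H \tau$. Since $\rho$ is a hybrid submersion and $\chi$ is an arbitrary hybrid semiconjugacy, \Cref{prop:hybrid_fiber_products} guarantees that this fiber product exists in $\hs$; let $\xi\colon \sigma \to \tau$ and $\tilde\chi\colon \sigma \to S$ be its projections, so that the resulting square is cartesian by construction. With this choice the two facts left to check are (a) that $\sigma$ is a hybrid time trajectory, and (b) that $\xi$ is a refinement. Granting these, $\tilde\chi$ is automatically a semiconjugacy out of a hybrid time trajectory and hence an execution of $S$, and the square is precisely the one required by \Cref{def:hyb_sub}.

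To establish (a) and (b) I would unwind the explicit fiber-product construction from the proof of \Cref{prop:hybrid_fiber_products}. The guiding picture is that, because $H$, $S$ and $\tau$ are time trajectories and both $\rho$ and $\chi$ restrict on flow sets to unit-slope time translations of $\mathbb{R}$ (the only smooth semiconjugacies for $\tfrac{d}{dt}$, since $\alpha'=1$), the constraint $\rho(x_1)=\chi(x_2)$ cuts each product of intervals down to a single diagonal segment; thus $\sigma$ is the ``common refinement'' of $\tau$ and of a time-shifted copy of $S$. Concretely, I would show: the fiber-product graph is again a directed path; each active set $I^S_v \times_{I^H_v} I^\tau_v$ is an interval carrying the flow $\tfrac{d}{dt}$, each flow set a half-open interval, each guard a single point, and each reset the restriction of $\id_{\mathbb R}$—so that, after invoking the remark following \Cref{def:hyb_sem} to replace the two-dimensional ambient manifolds of the fiber product by neighborhoods of these segments, $\sigma$ is (isomorphic in $\hs$ to) a genuine hybrid time trajectory. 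For (b), $\xi$ is a submersion on each mode because it is the pullback of the submersion $\rho$, and it is a surjective submersion because $\rho$ is surjective: every point of $I(\tau)$ lies under some point of $I(S)$ via $\rho$, giving both graph-level surjectivity and the covering condition $I_u = \bigcup_{v}\xi_v(I_v)$. Hence $\xi$ is a refinement.

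The main obstacle is the bookkeeping in part (a): verifying carefully that the pullback of the two path graphs (along the epimorphism $\rho$ and the arbitrary morphism $\chi$) is again a path, and that the set-theoretic fiber products of the interval active sets are genuine intervals rather than some larger configuration. Two edge cases warrant attention—the possible presence of $\chi$-trivial resets (edges of $\tau$ mapping to vertices of $H$), which must be matched correctly against the subdivisions recorded by $\rho$, and the dimension mismatch whereby the fiber product is built inside $\mathbb{R}\times\mathbb{R}$ while a time trajectory lives in $\mathbb{R}$; both are handled by working up to isomorphism in $\hs$, where by the smooth-set discussion of \Cref{sec:smooth} the ambient manifold away from the active set is irrelevant.
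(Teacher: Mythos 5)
Your proposal is correct and takes essentially the same route as the paper's proof: the paper likewise forms the fiber product supplied by \Cref{prop:hybrid_fiber_products} and then explicitly unwinds its data (a path graph, active sets of the form $I^\tau_{v_i} \cap (I^\rho_{w_j} - s)$, single-point guards, identity-induced resets) to conclude that the pruned fiber product is a hybrid time trajectory and that the projection onto $\tau$ is a hybrid surjective submersion, i.e., a refinement. The bookkeeping items you flag (trivial resets, the ambient $\mathbb{R}\times\mathbb{R}$ versus $\mathbb{R}$ identification) are precisely the details the paper records and then dispatches with ``one can check.''
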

\begin{proof}
Let $p\colon \rho \to \eta$ be a refinement, and let $\chi\colon \tau \to \eta$ be an execution.  By the proof of \Cref{prop:hybrid_fiber_products}, we have a cartesian square
\begin{linenomath*}
\[
    \begin{tikzpicture}
        \node (x) at (0,0) {$P(L)$};
        \node (y) at (3,0) {$\rho$};
        \node (z) at (0,-2) {$\tau$};
        \node (w) at (3, -2) {$\eta$};
        
        \draw[->, above] (x) to node {$P(\pi_1)$} (y);
        \draw[->, left] (x) to node {$P(\pi_2)$} (z);
        \draw[->, right] (y) to node {$p$} (w);
        \draw[->, below] (z) to node {$\chi$} (w);
    \end{tikzpicture}
\]
\end{linenomath*}
such that $(L, \pi_1, \pi_2)$ forms the fiber product in $\hempty$.  

Let $G(\tau)$ be the path on the vertices $(v_i)_{i=0}^{M-1}$, letting $e_i$ denote the edge from $v_i$ to $v_{i+1}$. Similarly, let $G(\rho)$ be the path $(w_j)_{j=0}^{N-1}$, letting $f_j$ denote the edge from $w_j$ to $w_{j+1}$.  Let $s \in I(\eta)$ be the starting point of the execution $\chi$. Then if 
$(v_i,w_j) \in V(L) = V(\tau) \times_{V(\eta)} V(\rho) \subset V(\tau) \times V(\rho)$, we have 
\begin{itemize}
    \item $M_{v_i, w_j}^L = M_{v_i}^\tau {\times}_{\chi,p} M_{w_j}^\rho = \mathbb{R}$,
    \item $I_{v_i, w_j}^L = I_{v_i}^\tau {\times}_{\chi,p} I_{w_j}^\rho = I_{v_i}^\tau \cap  (I_{w_j}^\rho - s)$, and
    \item $F_{v_i, w_j}^L = F_{v_i}^\tau {\times}_{\chi,p} F_{w_j}^\rho = F_{v_i}^\tau \cap  (F_{w_j}^\rho - s)$.
    \item $X_{v_i, w_j}^L = \frac{d}{dt}$
\end{itemize}
If $(e_i, f_j) \in E(L) = (E(\tau) \times_{\chi,p} E(\rho)) \sqcup (V(\tau) \times_{\chi,p} E(\rho)) \sqcup (E(\tau) \times_{\chi,p} V(\rho))$, then
\begin{itemize}
    \item if $(e_i, f_j) \in  (E(\tau) \times_{\chi,p} E(\rho))$, then $Z_{e_i, f_j}^L = Z_{e_i} \cap (Z_{f_j} - s)$
    \item if $(e_i, f_j) \in  (V(\tau) \times_{\chi,p} E(\rho))$, then $Z_{e_i, f_j}^L = F_{e_i}  \cap (Z_{f_j} - s)$
    \item if $(e_i, f_j) \in  (E(\tau) \times_{\chi,p} V(\rho))$, then $Z_{e_i, f_j}^L = Z_{e_i} \cap (F_{f_j} - s)$
\end{itemize}
In each case, the reset map $r_{e_i,f_j}^L \colon Z_{e_i,f_j}^L \to I_{\tgt(e_i), \tgt(f_j)}$ is given by the inclusion map of underlying sets. One can check that $P(L)$ is a hybrid time trajectory, and $P(\pi_2)$ is a hybrid surjective submersion.
\end{proof}

The next lemma further demonstrates that the dynamics on a subdivision are essentially the same as that of the underlying system. 

\begin{lemma}
\label{lem:subdivision_executions_fibers}
Let $p\colon S \to H$ be a hybrid subdivision.  Then for any execution $\chi\colon \tau \to H$ and fiber product $\tilde \chi\colon \sigma \to S$ of $\chi$ along $p$, we have $\tilde \chi(t) = p^{-1}(\chi(t))$ for every $t \geq 0$.
\end{lemma}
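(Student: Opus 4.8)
The plan is to reduce everything to the set-level fiber product structure of $\sigma$, reading $\chi(t)$, $\tilde\chi(t)$ as the images $I(\chi)(\tau(t))$, $I(\tilde\chi)(\sigma(t))$ and $p^{-1}(\cdot)$ as the topological preimage $I(p)^{-1}(\cdot)$. By \Cref{prop:hybrid_fiber_products}, applying the functor $I$ to the cartesian square exhibits the hybrid state space as a set-theoretic fiber product
\[
    I(\sigma) = I(\tau) \times_{I(H)} I(S) = \{(a,b) \in I(\tau) \times I(S) \mid I(\chi)(a) = I(p)(b)\},
\]
with $I(\xi)$ and $I(\tilde\chi)$ the two projections, i.e. $I(\xi)(a,b) = a$ and $I(\tilde\chi)(a,b) = b$; moreover $p \circ \tilde\chi = \chi \circ \xi$ since the square commutes. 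First I would record that, because $\tilde\chi$ is a fiber product of $\chi$ along $p$ and fiber products are unique up to isomorphism, \Cref{def:hyb_sub} forces the projection $\xi\colon \sigma \to \tau$ to be a refinement, so that \Cref{lem:ss_time} applies to $\xi$.

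The key preliminary step is to upgrade \Cref{lem:ss_time}, which gives only $I(\xi)(\sigma(t)) = \tau(t)$, to the sharper identity $\sigma(t) = I(\xi)^{-1}(\tau(t))$ for every $t \geq 0$. The inclusion $\subseteq$ is immediate. For $\supseteq$, I would use that every point $z$ of the time trajectory $\sigma$ carries a unique time value $s$ (so $z \in \sigma(s)$): if $I(\xi)(z) \in \tau(t)$, then also $I(\xi)(z) \in I(\xi)(\sigma(s)) = \tau(s)$, and since time values in $\tau$ are unique this forces $s = t$, whence $z \in \sigma(t)$.

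With these two facts in hand, both inclusions of the lemma are short set-theoretic verifications. For $\tilde\chi(t) \subseteq p^{-1}(\chi(t))$, take $u = I(\tilde\chi)(z)$ with $z \in \sigma(t)$; then $I(p)(u) = I(\chi)(I(\xi)(z)) \in I(\chi)(\tau(t)) = \chi(t)$, using $I(\xi)(z) \in \tau(t)$. For the reverse inclusion, given $u$ with $I(p)(u) = I(\chi)(a)$ for some $a \in \tau(t)$, the pair $(a,u)$ lies in $I(\sigma)$ by the fiber product description, satisfies $I(\xi)(a,u) = a \in \tau(t)$, hence $(a,u) \in \sigma(t)$ by the sharpened identity, and projects to $u = I(\tilde\chi)(a,u) \in \tilde\chi(t)$.

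I expect the main obstacle to be bookkeeping rather than conceptual: one must check that the active-set fiber products assembled in \Cref{prop:hybrid_fiber_products} really do combine, after applying $I$, into the honest set-theoretic fiber product $I(\tau)\times_{I(H)} I(S)$ with the projections behaving as claimed, and one must notice that the sharpened statement $\sigma(t) = I(\xi)^{-1}(\tau(t))$ genuinely requires the uniqueness of time values in a time trajectory---the raw conclusion of \Cref{lem:ss_time} alone is not enough to obtain the $\supseteq$ direction.
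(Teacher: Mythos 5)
Your proof is correct and follows essentially the same route as the paper's: the paper's commutative diagram with the representing object $N$ (\Cref{lem:representability}) is exactly your pointwise identification of $I(\sigma)$ with the set-theoretic fiber product $I(\tau)\times_{I(H)}I(S)$, and both arguments then combine this with \Cref{lem:ss_time} to establish the two inclusions. The only substantive difference is that you explicitly prove the sharpened identity $\sigma(t)=I(\xi)^{-1}(\tau(t))$, a small but genuine bookkeeping step that the paper uses implicitly when it asserts $v\in\sigma(t)$ without comment.
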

\begin{proof}
Let $x \in p^{-1}(\chi(t))$.  Then there exists $u \in \tau(t)$ such that $x \in p^{-1}(\chi(u))$.  From the definition of subdivision, we have the following commutative diagram 
\begin{linenomath*}
\[
    \begin{tikzpicture}
        \node (t) at (0,0) {$\tau$};
        \node (s) at (4,2) {$S$};
        \node (h) at (4,0) {$H$};
        \node (si) at (0,2) {$\sigma$};
        \node (o) at (-1, 3) {$N$};
        
        \draw[->,below] (t) to node {$\chi$} (h);
        \draw[->, right] (s) to node {$p$} (h);
        \draw[->, above right] (o) to node {$\hat v$} (si);
        \draw[->, left, out=270, in=160] (o) to node {$\hat u$} (t);
        \draw[->, above, out=0, in=140] (o) to node {$\hat x$} (s);
        \draw[->, above] (si) to node {$ \tilde \chi $} (s);
        \draw[->, left] (si) to node {$ \xi $} (t);
    \end{tikzpicture}
\]
\end{linenomath*}
where $N$ is the representing object defined in \Cref{lem:representability}, and $v \in \sigma(t)$.  Thus, $x \in \widetilde \chi(t)$.  Hence, $p^{-1}(\chi(t)) \subset \tilde \chi(t)$.  

Conversely, given $x \in \tilde \chi (t)$, there exists a $ v \in \sigma(t)$ such that $x = \chi(v)$.  \Cref{lem:ss_time} implies that $y := \xi(v)$ lies in $\tau(t)$.  Thus $p(x) \in \chi(t)$. Hence, $\tilde \chi(t) \subset p^{-1}(\chi(t))$. 
\end{proof}

\begin{lemma}
\label{lem:subdivision_executions_resets}
If $H$ is nonblocking, deterministic and $p\colon S \to H$ is a hybrid subdivision and $x \in M_v^H$, then $p^{-1}(x)$ is finite and totally ordered by a sequence of resets. In particular, each $y \in p^{-1}(x)$ (except possibly the last one) is contained in a guard set in $S$. 
\end{lemma}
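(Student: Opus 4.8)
The plan is to realize $p^{-1}(x)$ as the set of time-$0$ points of a single subdivided execution and to read off its chain structure from the combinatorics of hybrid time trajectories. We may assume $x \in I_v^H$, since otherwise $p^{-1}(x) = \emptyset$. First I would invoke \Cref{def:nonblocking} and \Cref{prop:det}: because $H$ is deterministic and nonblocking, there is a unique maximal fundamental execution $\chi\colon \tau \to H$ whose starting point is $x$, and the infinite execution guaranteed by nonblocking forces $\chi$ to be infinite (deleting trivial resets preserves the stop time, so the associated fundamental execution of an infinite execution is again infinite, hence maximal). The payoff of infiniteness is that $\tau$ has points at arbitrarily large times; in particular the chain of jumps that $\chi$ performs at time $0$ in $H$, say $x = x_0 \to x_1 \to \cdots \to x_q$, must be finite and terminate at a flow-set point, and determinism forces the $x_j$ to be distinct (a repeated value would produce a reset cycle, i.e. a fundamental Zeno execution at time $0$, contradicting the existence of an infinite execution from $x$).

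Next I would apply the subdivision property of \Cref{def:hyb_sub} to this particular $\chi$, producing a refinement $\xi\colon \sigma \to \tau$ and a lifted execution $\tilde\chi\colon \sigma \to S$ whose square is a fiber product. By \Cref{lem:subdivision_executions_fibers}, $\tilde\chi(0) = p^{-1}(\chi(0))$, so $p^{-1}(x)$ is exactly the subset of $\tilde\chi(0)$ lying over $x$. To get finiteness, I would argue that $\sigma$ has only finitely many jumps at time $0$: if not, then since the jump times are nondecreasing, every $\sigma_j$ would equal $0$, so $\sigma$ would be supported entirely at time $0$ and could not surject onto the positive-time points of $\tau$, contradicting $\xi(t) = \tau(t)$ from \Cref{lem:ss_time}. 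Hence the time-$0$ points of $\sigma$ form a finite chain $z_0 \to z_1 \to \cdots \to z_m$ (linked by the time-$0$ resets $f_0, \dots, f_{m-1}$ of $\sigma$), and $\tilde\chi(0) = \{\tilde\chi(z_0), \dots, \tilde\chi(z_m)\}$ is finite; in particular $p^{-1}(x) \subseteq \tilde\chi(0)$ is finite.

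Finally I would extract the chain structure. Since $\xi$ is a path morphism collapsing contiguous blocks of vertices, it maps the chain $(z_i)$ monotonically onto the time-$0$ chain $(\zeta_j)$ of $\tau$, where $\chi(\zeta_j) = x_j$. As the $x_j$ are distinct and $x = x_0$, the indices $i$ with $p(\tilde\chi(z_i)) = \chi(\xi(z_i)) = x$ are precisely the initial block $\{z_0, \dots, z_n\}$ mapping to $\zeta_0$; thus $p^{-1}(x) = \{\tilde\chi(z_0), \dots, \tilde\chi(z_n)\}$. For $i < n$ the edge $f_i$ lies over a trivial reset (as $\xi(z_i) = \xi(z_{i+1}) = \zeta_0$), but in the fiber product $\sigma \cong \tau \times_H S$ two distinct points sharing a $\tau$-component must differ in their $S$-component, so $\tilde\chi(z_i) \ne \tilde\chi(z_{i+1})$ and $\tilde\chi(f_i)$ is a genuine reset of $S$. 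This exhibits $p^{-1}(x)$ as a totally ordered chain $\tilde\chi(z_0) \to \cdots \to \tilde\chi(z_n)$ of distinct points in which each $\tilde\chi(z_i)$ with $i < n$ is the source of $\tilde\chi(f_i)$ and hence lies in a guard set of $S$, with only the terminal point $\tilde\chi(z_n)$ possibly outside every guard.

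The step I expect to be the main obstacle is the finiteness argument together with the bookkeeping that separates $p^{-1}(x)$ from the larger fiber $p^{-1}(\chi(0))$: one must simultaneously rule out instantaneous Zeno behavior upstairs in $\sigma$ and downstairs in $H$, and these two exclusions rely on genuinely different inputs --- surjectivity of the refinement (via \Cref{lem:ss_time}) for $\sigma$, and nonblocking-forced infiniteness of $\chi$ for $H$. Verifying that the block of indices over $x$ is an initial segment, and that the connecting resets are non-trivial, is where the explicit fiber-product description from the proof of \Cref{prop:simp_hyb} would be used to avoid any hand-waving.
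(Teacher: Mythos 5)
Your proof is correct and follows the same skeleton as the paper's: use determinism and nonblocking to obtain the unique infinite fundamental execution $\chi_x \in \infexc_H(x)$, pull it back along the subdivision to $\tilde\chi \colon \sigma \to S$, identify $p^{-1}(x)$ with time-$0$ points of $\sigma$, and derive finiteness from the infiniteness of the execution and the ordering from the reset chain of $\sigma$. The one substantive difference is the mechanism for identifying $p^{-1}(x)$: the paper applies the representing object of \Cref{lem:representability} and the universal property of the fiber product to lift each $y \in p^{-1}(x)$ directly to a point of $\sigma$ lying over the starting point $0 \in I(\tau)$, whereas you invoke \Cref{lem:subdivision_executions_fibers} to get the whole fiber $p^{-1}(\chi_x(0)) = \tilde\chi(0)$ and then carve out $p^{-1}(x)$ by a separate argument that determinism forces the time-$0$ points of $\chi_x$ to be distinct (no reset cycles). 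Both mechanisms work; the universality route is shorter and needs no distinctness claim, while your route is more explicit and, as a by-product, repairs a small imprecision in the paper's proof: the claim $\xi^{-1}(0) = \sigma(0)$ holds only when $\chi_x$ has no time-$0$ jumps, and in general $\xi^{-1}(0)$ is merely an initial segment of $\sigma(0)$, which is exactly the initial-block bookkeeping you carry out. Your use of the set-level description $I(\sigma) = I(\tau) \times_{I(H)} I(S)$ to show that consecutive fiber points differ in their $S$-component, so the connecting edges of $\sigma$ map to genuine (non-trivial) resets of $S$ and all but the last point lie in guard sets, also makes precise what the paper leaves implicit in the phrase ``repeated resets at time $0$.''
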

\begin{proof}
Since $H$ is nonblocking and deterministic, there is exactly one fundamental execution $\chi_x \in \infexc_H(x)$, which pulls back to an execution $\tilde \chi_x \in \infexc_S$. Now if $y \in p^{-1}(x)$, we can use the representing object $N$ from \Cref{lem:representability} and the universality of the fiber product to get the diagram
\begin{linenomath*}
\[
    \begin{tikzpicture}
        \node (x) at (0,0) {$\sigma$};
        \node (y) at (3,0) {$S$};
        \node (z) at (0,-2) {$\tau$};
        \node (w) at (3, -2) {$H$};
        \node (n) at (-1, 1) {$N$};
        
        \draw +(.25,-.75) -- +(.75,-.75)  -- +(.75,-.25);

        \draw[->, above, out=0, in=140] (n) to node {$\hat y$} (y);
        \draw[->, left, out=270, in=160] (n) to node {$\hat 0$} (z);
        \draw[->, above right] (n) to node {$\hat z$} (x);
       
        \draw[->, above] (x) to node {$\tilde{\chi}_x$} (y);
        \draw[->, left] (x) to node {$\xi$} (z);
        \draw[->, right] (y) to node {$p$} (w);
        \draw[->, below] (z) to node {$\chi_x$} (w);
    \end{tikzpicture}
\]
\end{linenomath*}
where $\xi$ is a refinement. Since $\xi^{-1}(0) = \sigma(0)$ and $x \in \chi_x(0)$, we see that $p^{-1}(x)$ consists of repeated resets at time $0$, and has an order induced by $\sigma$. Since $H$ is nonblocking, $\chi^t_x$ exists for all $t \geq 0$, and so there must only be finitely many $0$ points. 
\end{proof}

\begin{thm}
\label{thm:sudvision_properties}  
If $p\colon S \to H$ is a hybrid subdivision with $S, H \in \hsdn$, then
\begin{enumerate}
    \item[(i)] $p\colon G(S) \to G(H)$ is a graph epimorphism;
    \item[(ii)] $I(p)$ is surjective;
    \item[(iii)] $p_v\colon I^S_v \to I^H_{p(v)}$ is an injective local diffeomorphism.
\end{enumerate}
\end{thm}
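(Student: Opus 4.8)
The plan is to treat the three conclusions in the order (ii), (i), (iii), since the surjectivity of $I(p)$ feeds directly into the graph epimorphism claim, and the graph-theoretic facts in turn organize the fiber analysis needed for (iii). For \textbf{(ii)}, fix $x \in I^H_{p(v)}$. Since $H$ is nonblocking (\Cref{def:nonblocking}) there is an infinite execution $\chi\colon\tau\to H$ with starting point $x$, and since $p$ is a subdivision (\Cref{def:hyb_sub}) this $\chi$ pulls back to a cartesian square with an execution $\widetilde\chi\colon\sigma\to S$ and a refinement $\xi\colon\sigma\to\tau$. The starting point $s_0$ of $\sigma$ satisfies $p(\widetilde\chi(s_0)) = \chi(\xi(s_0)) = \chi(\text{start of }\tau) = x$, so $x \in \im I(p)$; alternatively one reads this off \Cref{lem:subdivision_executions_fibers} at $t=0$. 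Hence $I(p)$ is surjective.

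For \textbf{(i)}, the vertex map is surjective: given $u \in V(H)$, pick any $x \in I_u^H$ (nonempty, as $H\in\hs$), lift it by (ii) to some $y \in I^S_w$, and use that $I(H)$ is a disjoint union over modes to conclude $p(w)=u$. For the edge condition $E(G(H))\subset\im(p_E)$, fix $e \in E(H)$ and choose $x \in Z_e^H$ (nonempty in $\hs$). By determinism (\Cref{def:deterministic}) the sets $\{F_{\src(e)}\}\cup\{Z_{e'}\}$ are pairwise disjoint, so $x\notin F_{\src(e)}$ and the unique fundamental execution $\chi_x$ of \Cref{prop:det} must immediately take a reset, and the semiconjugacy square forces that reset to carry an edge $\epsilon\in E(\tau)$ with $\chi_x(\epsilon)=e$. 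Pulling back along $p$ and using that the refinement $\xi$ is a hybrid surjective submersion (hence $E(\tau)\subset\im\xi_E$), there is $\phi\in E(\sigma)$ with $\xi_E(\phi)=\epsilon$. Then $(p\circ\widetilde\chi)_E(\phi)=\chi_x(\epsilon)=e\in E(H)$; were $\widetilde\chi_E(\phi)$ a vertex of $S$, the composite would land in $V(H)$, a contradiction, so $\widetilde\chi_E(\phi)=f\in E(S)$ with $p_E(f)=e$. Thus $p$ is a graph epimorphism.

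For \textbf{(iii)}, the submersion property of $p_v$ is part of the hypothesis that $p$ is a hybrid submersion, so only injectivity and the local-diffeomorphism (immersion) claims remain. \emph{Injectivity} is the conceptually interesting step and I would argue it from the requirement that subdivision fiber products be \emph{time trajectories}. Suppose $y_1\ne y_2 \in I_v^S$ with $p(y_1)=p(y_2)=x$. Form the fundamental execution $\chi_x\colon\tau\to H$ and its subdivision pullback $\widetilde\chi\colon\sigma\to S$; by \Cref{lem:subdivision_executions_fibers} and \Cref{lem:subdivision_executions_resets} both $y_i$ appear in $\widetilde\chi(0)=p^{-1}(\chi_x(0))$ as time-$0$ visits of the path $\sigma$ to the fiber-product mode over the pair $(a,v)$, where $a$ is the mode of $\tau$ carrying $x$. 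Two distinct such visits would force the vertex $(a,v)\in V(\tau)\times_{V(H)}V(S)$ to occur twice in $\sigma$, which is impossible because $\sigma$ is a hybrid time trajectory, i.e. a simple directed path. (This is exactly the mechanism that disqualifies the naive single-mode ``broken circle'': its fiber product with a once-around execution contains a self-loop and is not a time trajectory, so it is not a subdivision.) For the \emph{local diffeomorphism} claim I would show the submersion $p_v$ is also an immersion, hence equidimensional and locally invertible, by combining \Cref{lem:subdivision_executions_resets} (the fibers $p^{-1}(x)$ are discrete) with the flow semiconjugacy on $F_v^S$: a nontrivial kernel of $T\widetilde p_v$ transverse to the flow would, together with positive invariance of trajectories inside the active set, produce a positive-dimensional family of preimages of $x$, contradicting discreteness.

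The main obstacle is precisely this last equidimensionality step: since active sets are arbitrary subsets of their ambient manifolds, a representative submersion may a priori have positive-dimensional ambient fibers meeting $I_v^S$ in only finitely many points, so ruling out a transverse kernel requires genuinely using the dynamics (the flow box around a point of $F_v^S$) rather than a naive dimension count, and requires separately handling the degenerate case where the $H$-execution revisits $x$ at time $0$ through a reset cycle, where one must track several fiber-product vertices $(a,v)$ over the same $S$-mode. I expect the cleanest route to be to localize near flow points, where injectivity plus the semiconjugacy $X^H_{p(v)}\circ\widetilde p_v = T\widetilde p_v\circ X^S_v$ pins down the differential, and then to propagate the conclusion to guard points by the reset-chain total order of \Cref{lem:subdivision_executions_resets}.
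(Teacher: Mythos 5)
Your treatment of (i) and (ii) is correct and follows essentially the paper's route: nonblocking supplies an infinite execution $\chi_x$, the subdivision property pulls it back, and surjectivity of $I(p)$ and of the vertex and edge maps is read off from the pullback (your edge argument, which routes through edge-surjectivity of the refinement $\xi$, is a longer path to the same place as the paper's direct observation that any point of $S$ lying over $x \in Z_e^H$ must itself sit in a guard of $S$ mapping into $Z_e^H$). Your injectivity argument in (iii) is also the paper's mechanism---two distinct points of $\sigma$ over the same pair (point of $\tau$, mode of $S$) contradict the interval/simple-path structure of a hybrid time trajectory---but your implementation leaves exactly the loose end you flag: extracting the preimage points from $\tilde\chi(0) = p^{-1}(\chi_x(0))$ only places them over possibly \emph{different} time-$0$ points of $\tau$ when $\chi_x$ revisits $x$ through a reset cycle at time $0$. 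The paper sidesteps this case entirely: using the universal property of the cartesian square (via the representing object of \Cref{lem:representability}) it manufactures, for each $y_i$, a point of $\sigma$ lying over the pair $(z, y_i)$, where $z$ is the starting point of $\tau$; both points then lie over the same $z$ by construction, so no case analysis arises. (The degenerate case is in fact vacuous in $\hsdn$---a time-$0$ reset cycle through $x$ would force the unique maximal fundamental execution from $x$ to be Zeno, contradicting nonblocking---but that is an additional argument your sketch does not supply.)

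The genuine gap is the local-diffeomorphism step of (iii). You correctly identify the crux---a representative $\tilde p_v$ could a priori be a submersion whose positive-dimensional ambient fibers meet $I_v^S$ in at most one point---but your proposed repair (flow boxes plus discreteness of $p^{-1}(x)$ from \Cref{lem:subdivision_executions_resets} rule out a kernel transverse to the flow) does not work: both of those inputs are statements about the \emph{active set}, while $\ker T\tilde p_v$ lives in the \emph{ambient manifold}, and nothing forces the kernel directions to meet $I_v^S$ at all, so no positive-dimensional family of preimages inside $I(S)$ is ever produced. Concretely, let $H$ be the smooth system $(\mathbb{R}, \frac{d}{dt})$ and let $S$ have a single resetless mode with $M_v^S = \mathbb{R}^2$, $X_v^S = \partial_{x_1}$, $I_v^S = F_v^S = \mathbb{R} \times \{0\}$, with $p_v$ the first-coordinate projection: this is an injective hybrid submersion satisfying the flow semiconjugacy, with singleton fibers and flow boxes everywhere, yet no representative of $p_v$ is an immersion. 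What disqualifies this configuration as a subdivision is only the cartesian condition of \Cref{def:hyb_sub}: the $\hs$-fiber product of any execution $\tau \to H$ with $S$ has modes whose ambient manifolds are $2$-dimensional, and such smooth sets cannot be isomorphic to modes of a hybrid time trajectory (a curve in $\mathbb{R}^2$ is not diffeomorphic, as a smooth set, to an interval in $\mathbb{R}$). So equidimensionality must be extracted from cartesianness---the ambient dimension $1 + \dim M_v^S - \dim M_{p(v)}^H$ of the fiber-product modes must equal $1$, forcing $\dim M_v^S = \dim M_{p(v)}^H$---after which the submersion hypothesis alone upgrades to a local diffeomorphism. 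The paper's own write-up of this step is admittedly brisk (injective submersion, hence open, hence local diffeomorphism), but it is the fiber-product property that underwrites it; the dynamics cannot substitute for it, and your sketch never invokes cartesianness at this point.
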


\begin{proof}
Statements (i) and (ii) follow from the fact that the definition of subdivision guarantees the existence of an execution $\tilde\chi_x$ of $S$ pulling back $\chi_x \in \infexc_H(x)$, for every $x \in I(H)$. More precisely, given $x \in I^H_v$, we have that $x \in (p \circ \tilde\chi_x)(0)$, and so necessarily $x \in \im(I(p))$ and hence $v \in \im(p)$. To see that $p$ is surjective on edges, let $e \in E(H)$ and $x \in Z^H_e \subset I^H_v$. Then $x\in \im(I(p))$ as above. Because $S, H$ are deterministic and $p_u(F^S_u) \subset F^H_u$ for all $u \in V(S)$, there must be some $\tilde e \in E(S)$ with $x \in p_{\src(\tilde e)}(Z^S_{\tilde e})$. But $p_{\src(\tilde e)}(Z^S_{\tilde e}) \subset Z^H_{p(\tilde e)}$ and so $p(\tilde e) = e$. 

For (iii), suppose that $p_v$ is not injective for some $v \in V(S)$. Then there exist $y_1 \neq y_2 \in p_v^{-1}(x) \subset p^{-1}(x) \cap I^S_v$. Let $z$ be the initial point of $\chi_x$. Then $(z,y_1)$ and $(z, y_2)$ are nonequal initial points of $\tilde\chi_x$, a contradiction. Since $p_v$ is a submersion, it is also open and hence a local diffeomorphism.
\end{proof}

\begin{lemma}
\label{lem:subd_compose}
If $p_1 \colon S \to H$ and $p_2 \colon H \to K$ are hybrid subdivisions, then $p_2 \circ p_1$ is a hybrid subdivision.
\end{lemma}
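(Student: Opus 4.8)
The plan is to verify the two requirements of \Cref{def:hyb_sub} for the composite $p_2 \circ p_1 \colon S \to K$: that it is a hybrid submersion, and that every execution of $K$ pulls back, along a refinement of its hybrid time trajectory, to an execution of $S$ forming a fiber product over $K$. The first requirement is immediate: for each $v \in V(S)$ the component $(p_2 \circ p_1)_v = (p_2)_{p_1(v)} \circ (p_1)_v$ is a composite of submersions of smooth sets (composition being computed on representatives), hence a submersion. So the content lies in the execution-lifting condition, which I would establish by a two-step pullback construction and the pasting law.

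Given an execution $\chi \colon \tau \to K$, I would first invoke the subdivision property of $p_2$ to obtain a refinement $\xi_2 \colon \tau' \to \tau$ and an execution $\chi' \colon \tau' \to H$ whose square (with vertical legs $\xi_2, p_2$ and horizontal edges $\chi', \chi$) is a fiber product over $K$. I would then view $\chi'$ as an execution of $H$ and apply the subdivision property of $p_1$ to it, obtaining a refinement $\xi_1 \colon \sigma \to \tau'$ and an execution $\tilde\chi \colon \sigma \to S$ whose square (with vertical legs $\xi_1, p_1$ and horizontal edges $\tilde\chi, \chi'$) is a fiber product over $H$. Stacking these two squares vertically, the outer rectangle has top edge $\tilde\chi \colon \sigma \to S$, left edge $\xi_2 \circ \xi_1 \colon \sigma \to \tau$, right edge $p_2 \circ p_1$, and bottom edge $\chi$.

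The conclusion then follows from two facts. By the pasting law for pullbacks (e.g.\ \cite{riehl:ctc}), since both inner squares are fiber products in $\hs$ --- these exist because $p_1, p_2$ are hybrid submersions, by \Cref{prop:hybrid_fiber_products} --- the outer rectangle is again a fiber product in $\hs$. It remains to check that $\xi := \xi_2 \circ \xi_1$ is a refinement of $\tau$: its source $\sigma$ and target $\tau$ are hybrid time trajectories, and a refinement is precisely a hybrid surjective submersion between such, so I need only observe that hybrid surjective submersions are closed under composition. Submersions compose, epimorphisms in $\dgraph$ compose, and the covering condition $I_u = \bigcup_{w \in (p_2 \circ p_1)^{-1}(u)} (p_2 \circ p_1)_w(I_w)$ follows by substituting the analogous identity for $p_1$ on each mode of $H$ into that for $p_2$. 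Hence $\xi$ is a refinement, $\tilde\chi$ is an execution of $S$, the outer square is cartesian, and $p_2 \circ p_1$ satisfies \Cref{def:hyb_sub}.

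The categorical skeleton here is routine, and I expect no dynamical or Conley-theoretic subtlety to intrude: the argument is purely formal once the requisite closure statements are in place. The only genuine verifications --- and thus the main obstacle, such as it is --- are that the composite of two refinements is again a refinement (which reduces to the closure of hybrid surjective submersions under composition, in particular the active-set covering condition) and that the fiber products supplied by \Cref{prop:hybrid_fiber_products} are bona fide pullbacks in $\hs$, so that the pasting law genuinely applies.
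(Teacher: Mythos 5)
Your proof is correct and takes essentially the same route as the paper, whose two-line proof notes that the submersion condition is clear and that the subdivision condition follows because vertical composition (pasting) of cartesian squares is cartesian---exactly your stacked two-step pullback, with the composite-of-refinements check left implicit. One small note: the two inner squares are cartesian directly by the hypothesis that $p_1, p_2$ are subdivisions, so your appeal to \Cref{prop:hybrid_fiber_products} for their existence is unnecessary (though harmless).
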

\begin{proof}
The hybrid surjective submersion condition is clear.  The subdivision condition follows from the fact that the vertical composition of cartesian squares is cartesian.
\end{proof}

\subsubsection{Template-anchor pairs}
We are now able to define our notion of templates and anchors consistent with the ideas described in \cite{Full_Koditschek_1999} and suitable for the hybrid setting. 

\begin{defn} \label{def:template-anchor}
A {\bf template-anchor pair} is a span $T \xleftarrow{p} S \xrightarrow{i} A$ such that 
\begin{enumerate}[(i)]
    \item  $p$ is a hybrid subdivision;
    \item $i$ is a hybrid embedding;
    \item $i(S)$ is an attracting set in $A$. 
\end{enumerate} 
\end{defn}

As an illustration of our template-anchor framework, the following example demonstrates how to anchor a template limit cycle in a vertical  hopper control system, translating the work of \cite{De_Koditschek_2015} into our categorical language. We use the ubiquitous spring-loaded inverted pendulum (SLIP) to model an abstract leg.  Despite its simplicity, the SLIP model has been instrumental in analyzing limbed locomotion in animals \cite{Holmes_Full_Koditschek_Guckenheimer_2006}, as well as in synthesizing control laws for robotic legs \cite{Raibert_1986}.

\begin{example}[Vertical hopper \cite{De_Koditschek_2015}]
\label{ex:vhop}
The aim of this example is to demonstrate how to anchor a template limit cycle in a periodic vertical hopper control policy. In our mathematical parlance, this amounts to constructing a template-anchor pair $(\mathbb{S}^1, Y) \leftarrow S \rightarrow H_\hop$, for some period-defining template vector field $Y$ and anchoring control hybrid system $H_\hop$.  We begin by defining $H_\hop$ (\Cref{fig:hhop}).   

The graph $G(H_\hop)$ has a single continuous mode $v$ corresponding to stance and a single reset $e$ corresponding to the integration of the ballistic trajectory between takeoff and landing.   To describe the stance dynamics, we start with a driven, damped harmonic oscillator:
\begin{linenomath*}
\[
    \ddot{\delta} + 2 \omega \beta \dot{\delta} + \delta = \lambda,
\]
\end{linenomath*}
where $\delta$ is the spring deflection, $\beta$ the damping coefficient,  $\omega$ the natural frequency, and $\lambda$ the driving force.   Setting $x_1 := \delta$ and $x_2 := \dot{\delta}/\omega$, we define the ambient manifold to be $M_v := \mathbb R^2 \setminus \{0\}$  
and the stance active set to be 
\begin{linenomath*}
\[
    I_{v} := \{ x_1 \le 0 \} \subset \mathbb{R}^2 \setminus \{0\}
\]
\end{linenomath*}
Under this change of variables, the differential equation above gives the vector field
\begin{linenomath*}
\begin{equation}
\label{eqn:hop}
X_v := \begin{pmatrix} \dot{x}_1 \\ \dot{x}_2 \end{pmatrix}
= 
\begin{pmatrix}
\omega x_2 \\
\lambda/\omega - \omega x_1  - 2 \beta \omega x_2
\end{pmatrix}
\end{equation}
\end{linenomath*}

For ease of exposition, we use a slight simplification of the control policy in \cite{De_Koditschek_2015}: 
\begin{linenomath*}
\[
    \lambda := \frac{k_t x_2}{\|x\| },
\]
\end{linenomath*}
where $k_t$ is a constant that can be tuned to maintain a given jump height.  
The guard set is $Z_e := \{(x_1, x_2) \in M_v \mid x_1 = 0 \text{ and } x_2 \geq 0\}$. The reset map $r_e$ is  given by $(0, x_2) \mapsto (0, -x_2)$, that is, the flight phase simply reverses the velocity.  
The flow set is $F_v := I_v \setminus Z_e$. 

By analyzing the Poincar\'e map associated to the guard set $Z_e$, one can show that that $H_\hop$ contains a unique attracting limit cycle \cite{De_Koditschek_2015}. However, we will prove the same result using a more elementary comparison of $H_\hop$ to a smooth system on the punctured plane (for which it is easy to prove the existence of a limit cycle), and as a by-product explicitly define a template-anchor pair for the $\mathbb{S}^1$-limit cycle  inside of $H_\hop$.    

\begin{figure*}[t!]
    \centering
    \begin{subfigure}[t]{0.45\textwidth}
        \centering
        \includegraphics[height=1.5in]{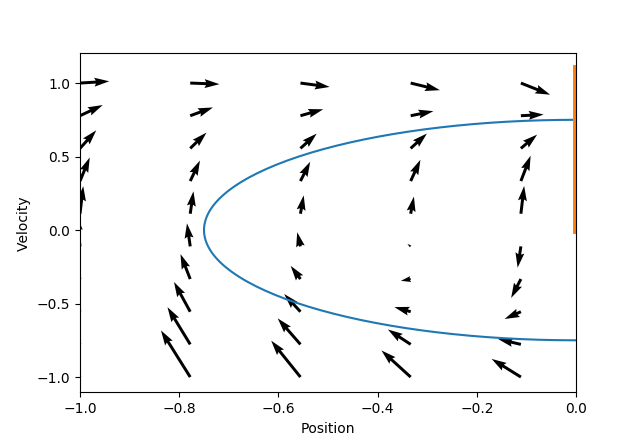}
        \caption{The continuous mode of $H_\hop$  with its guard $Z_v$ in orange and its attracting limit cycle $g(S)$ in blue.  The reset map reflects any point in the guard through the origin. }
        \label{fig:hhop}
    \end{subfigure}%
    \hfill
    \begin{subfigure}[t]{0.45\textwidth}
        \centering
        \includegraphics[height=1.5in]{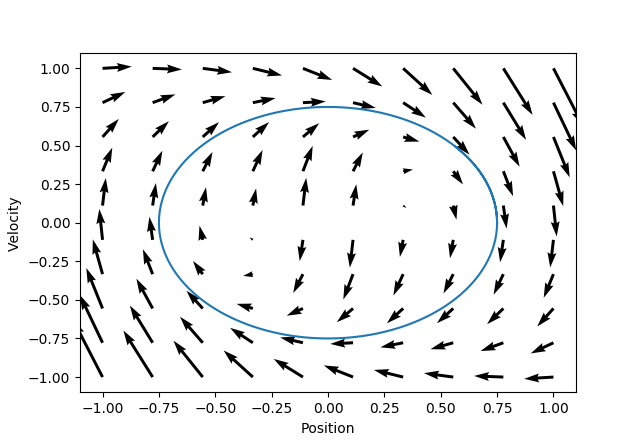}
        \caption{The continuous system $L$ with its attracting limit cycle $f(\mathbb{S}^1)$ in blue}
        \label{fig:l}
    \end{subfigure}
    
    \begin{subfigure}[t]{0.9\textwidth}
        \centering
        \includegraphics[height=1.5in]{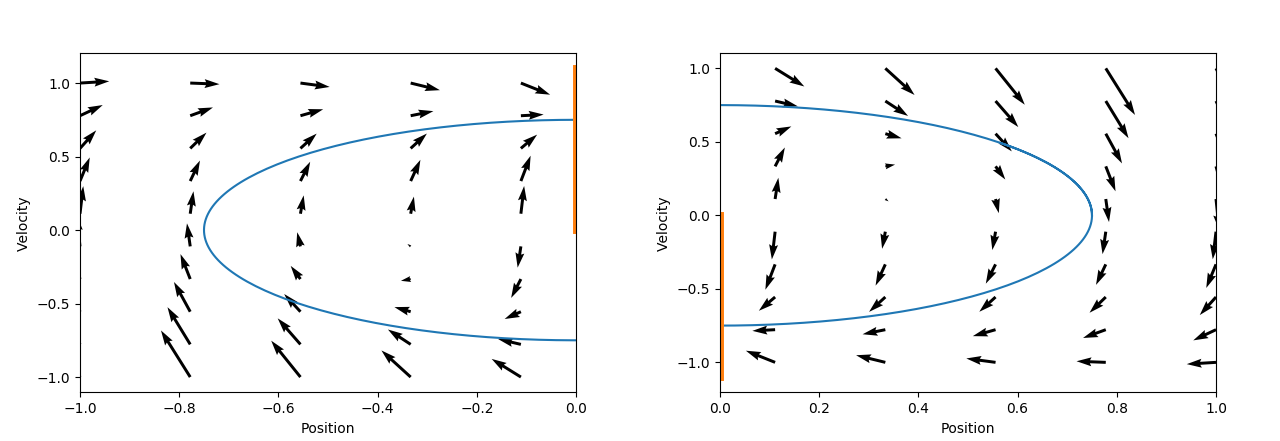}
        \caption{The two modes of $K$ with the suggestive change of coordinates $x \mapsto -x$ for the mode on the right.  Guards are in orange and the attracting limit cycle $e(C)$ in blue.  The resets map a point in a guard to the point with the same coordinates in the other mode.}
        \label{fig:k}
    \end{subfigure}
    \caption{Comparing $H_\hop$, $K$, and $L$}
    \label{fig:comparison}
    
\end{figure*}

As an intermediate step between $H_\hop$ and this continuous system, we first construct the following ``double cover'' hybrid system $K$  (\Cref{fig:k}). We define the graph 
\begin{linenomath*}
\[
    G(K) := {v_1} \,\bullet \rightleftarrows \bullet \, {v_2},
\]
\end{linenomath*}
which is a double cover of $G(H_\hop)$.   Each continuous mode of $K$ is defined to be identical to the continuous mode of $H_\hop$.  The resets of $K$ are also identical to the reset of $H$ except each maps from $Z_{v_i} \to I_{v_{1-i}}$ instead of $Z_v \to I_v$.  We define $s \colon K \to H$ to be the two-fold covering semiconjugacy.

We can now define $L$ to be the smooth system  $(\mathbb{R}^2 \setminus \{0\}, X)$ where the vector field $X$ is defined by the same formula as \Cref{eqn:hop} (see \Cref{fig:l}). Intuitively, $L$ corresponds to gluing $K$ together along its reset maps.  More formally, since
\begin{equation} \label{eqn:odd} 
    \dot{x}_{|x = (-x_1, -x_2)} = \left(-\omega x_2, \omega x_1 - \left(\frac{k_t}{\|x\|} - 2 \beta \omega\right) x_2\right)^T = - \dot{x}_{|x = (x_1, x_2)},
\end{equation} the map $p\colon K \to L$ defined by $p_{v_1}(x_1, x_2) = (x_1, x_2)$ and $p_{v_2}(x_1, x_2) = -(x_1, x_2)$ is a subdivision (an instance of \Cref{ex:slice}).  

Since $L$ is a classical dynamical system, it is much easier to explicitly prove the existence of a limit cycle.  Indeed, as in the proof of Proposition 1 of \cite{De_Koditschek_2015},  for each $x \in \mathbb{R}^2 \setminus \{0\}$ we have
\begin{linenomath*}
\[
    x \cdot \dot{x} = x_2^2 \left( \frac{k_t}{\omega \|x\|} -2 \beta \omega  \right),
\]
\end{linenomath*}
which is zero when $\|x\| = \frac{k_t}{2 \beta \omega^2}$.  
In addition, $x \cdot \dot{x} \geq 0$ if $\|x\| < \frac{k_t}{2 \beta \omega^2}$, and $x \cdot \dot{x} \leq 0$ if  $\|x\| > \frac{k_t}{2 \beta \omega^2}$.  Since $\frac{d}{dt} \|x\|^2 = 2 (x \cdot \dot{x})$, it follows from LaSalle's theorem \cite{lasalle1968stability,lasalle1968stability} that the set $\|x\| = \frac{k_t}{2 \beta \omega^2}$ forms an attracting set in $L$.

We define $f\colon \mathbb{S}^1 \to \mathbb{R}^2 \setminus \{0\}$ to be the embedding corresponding to this attracting limit-cycle, namely $f(x) := \frac{k_t}{2 \beta \omega^2} x$.   Then $f$ defines an semiconjugacy $(\mathbb{S}^1, f^*X) \to L$,  where $f^*X$ is the pullback vector field, which is well-defined since $f(\mathbb{S}^1)$ is invariant in $L$.   

To anchor an $\mathbb{S}^1$-template in $H_\hop$, we need to construct a subdivision cutting the circle wherever the corresponding $H_\hop$-limit cycle hits a reset.  First we will do this for $K$, then translate the results to $H$. To do this cutting of the circle, we define $C$ to be the hybrid system with graph $G(C)= G(K)$ where $I_{v_1}^C = \{(x_1, x_2) \in \mathbb{S}^1  \mid x_1 \le 0 \}$ and $I_{v_2}^C = \{(x_1, x_2) \in \mathbb{S}^1  \mid x_1 \geq 0\}$. Let $X_{v_i}^C$ be the restriction of $f^*X$ to $I_{v_i}^C$ for $i = 1,2$.   We define the guard sets $Z_{12} \subset I_{v_1}^C$ and $Z_{21} \subset I_{v_2}^C$ by $Z_{12} = \{(0,1)\}$ and $Z_{21} = \{(0,-1)\}$.  The reset maps are identity inclusions.  Let $q\colon C \to (\mathbb{S}^1, f^*X)$ be the subdivision given by identifying points connected by reset maps, and let $e\colon C \to K$ be the embedding given by the identity map on graphs and $e_{v_i}(x_1, x_2) = (x_1, x_2)$ for $i=1,2$.  Then we have the following commuting square:
\begin{linenomath*}
\[
    \begin{tikzpicture}
        \node (s1) at (0,0) {$(\mathbb{S}^1, f^*X)$};
        \node (k) at (3,2) {$K$};
        \node (l) at (3,0) {$L$};
        \node (c) at (0,2) {$C$};
        
        \draw[right hook->,below] (s1) to node {$f$} (l);
        \draw[->>, right] (k) to node {$p$} (l);
        \draw[->>, left] (c) to node {$q$} (s1);
        \draw[right hook->, above] (c) to node {$e$} (k);
    \end{tikzpicture}
\]
\end{linenomath*}

The upper-left-hand corner of the above square defines a template-anchor pair $\mathbb{S}^1 \xleftarrow{q} C \xrightarrow{e} K$. To construct a corresponding span for $H_\hop$, we will use the fact that $K$ is a double cover of $H_\hop$.  Since one period of the limit cycle in $K$ corresponds to two periods of the limit cycle in $H_\hop$, the template vector field and its subdivision for $K$  must be modified to work for $H_\hop$. 
First we note that just as $K$ is a double cover of $H_\hop$, there exists a hybrid system $S$ such that $C$ is a double cover of $S$.  More precisely, we define $G(S):= G(H_\hop)$, the continuous mode of $S$ to be the same as the mode $v_1$ of $C$ with the same guard, and the reset by $x \mapsto -x$.  By \Cref{eqn:odd}, there exists a two-fold covering semiconjugacy $t\colon C \to S$ given by $t_{v_1}(x) = x$ and $t_{v_2}(x) = -x$. Moreover, since $s \circ e$ is constant on the fibers of $t$ (both on the graph level and on the data associated to graph elements), there exists an embedding $g \colon S \to H_\hop$ such that the following square commutes:
\begin{linenomath*}
\[
    \begin{tikzpicture}
        \node (s1) at (0,0) {$S$};
        \node (k) at (3,2) {$K$};
        \node (l) at (3,0) {$H_\hop$};
        \node (c) at (0,2) {$C$};
        
        \draw[right hook->,below] (s1) to node {$g$} (l);
        \draw[->>, right] (k) to node {$ s$} (l);
        \draw[->>, left] (c) to node {$ t$} (s1);
        \draw[right hook->, above] (c) to node {$e$} (k);
    \end{tikzpicture}
\]
\end{linenomath*}
Since $g(S) = s(e(C))$ and $e(C)$ is an attracting set in $K$, it follows from the definition of $s$ that $g(S)$ is also an attracting set in $H_\hop$.

Lastly, \Cref{eqn:odd} implies that the vector field associated to the continuous mode of $S$ can be smoothly spliced across its reset map.  More precisely, there exists a subdivision $p_S\colon S \to (\mathbb{S}^1, 2_* f ^*(X))$, where $2_*$ denotes the pushforward by the map $2\colon \mathbb{S}^1 \to \mathbb{S}^1$ given by $\theta \mapsto 2\theta$, which is well-defined by \Cref{eqn:odd}.  Thus the span 
\begin{linenomath*}
\[
    (\mathbb{S}^1, 2_*f^*X) \xleftarrow{p_S} S \xrightarrow{g} H_\hop
\]
\end{linenomath*}
defines a template-anchor pair. \qed
\end{example}

\subsection{Hierarchical composition of template-anchor pairs}

We define the hierarchical composition of template-anchor pairs by taking fiber products of subdivisions.  The following series of propositions demonstrate the compatibility of fiber products with hybrid submersions and isolating neighborhoods.   This section concludes with \Cref{thm:ta_comp}, which shows that this notion of hierarchical composition is weakly associative.

\begin{prop}
\label{prop:temp_ench_fiber_products}
Given a hybrid embedding $i\colon K_2 \hookrightarrow H$ and a hybrid submersion $p\colon K_1 \twoheadrightarrow H$ with resulting fiber product 
\begin{linenomath*}
\[
    \begin{tikzpicture}
        \node (k1) at (0,0) {$K_1$};
        \node (k2) at (3,2) {$K_2$};
        \node (h) at (3,0) {$H$,};
        \node (pb) at (0,2) {$K_1 \times_{p,i} K_2$};
        
        \draw +(.5,1) -- +(1,1)  -- +(1,1.5);

        \draw[->>,below] (k1) to node {$p$} (h);
        \draw[right hook->, right] (k2) to node {$i$} (h);
        \draw[->, left] (pb) to node {$\tilde i$} (k1);
        \draw[->, above] (pb) to node {$\tilde p$} (k2);
    \end{tikzpicture}
\]
\end{linenomath*}
the morphism $\tilde p$ is a hybrid submersion, and $\tilde i$ is a hybrid embedding. Moreover, if $p$ is a surjective submersion or subdivision, then so is $\tilde p$.
\end{prop}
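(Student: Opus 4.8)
The plan is to exploit the explicit fiber-product construction of \Cref{prop:hybrid_fiber_products} together with the standard stability of submersions, embeddings, and epimorphisms under base change. Writing $L := K_1 \times_{p,i} K_2$, each vertex of $L$ is a pair $(v,u)$ with $p(v) = i(u)$, the active set $I^L_{(v,u)}$ is the transverse fiber product $I^{K_1}_v \times_{I^H_{p(v)}} I^{K_2}_u$, and $\tilde p_{(v,u)}, \tilde i_{(v,u)}$ are its two projections. First I would handle the main statement. On representatives the ambient fiber product $M^L_{(v,u)} = U_{p_v} \times_{M^H} U_{i_u}$ exists in $\mfld$ precisely because $p_v$ is a submersion; its projection to $U_{i_u}$ is then again a submersion (base change of $p_v$), so $\tilde p$ is a hybrid submersion. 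For $\tilde i$, the projection $M^L_{(v,u)} \to U_{p_v}$ is a diffeomorphism onto the submanifold $p_v^{-1}(\im i_u)$ — its inverse being $x_1 \mapsto (x_1, i_u^{-1}(p_v(x_1)))$, which is smooth since $i_u$ is an embedding — hence an embedding; and since the graph morphism $G(i)$ is monic, its base change $G(\tilde i)$ is monic in $\dgraph$. Thus $\tilde i$ is a hybrid embedding.

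Next I would treat the surjective-submersion case. Condition (i) is already covered, so only (ii) and (iii) remain. For (ii), epimorphisms in $\dgraph$ are stable under pullback — either because $\dgraph$ is equivalent to a presheaf topos, or by the direct check that, given $e \in E(K_2)$, monicity of $i$ forces $i(e) \in E(H)$, whence surjectivity of $G(p)$ on edges provides $\tilde e$ with $p(\tilde e) = i(e)$ and the pair $(\tilde e, e)$ lifts $e$ — so $G(\tilde p)$ is epic. For (iii), given $u \in V(K_2)$ and $x_2 \in I^{K_2}_u$, the covering property (iii) of $p$ applied to $i_u(x_2) \in I^H_{i(u)}$ yields $v$ with $p(v) = i(u)$ and $x_1 \in I^{K_1}_v$ with $p_v(x_1) = i_u(x_2)$; then $(x_1,x_2) \in I^L_{(v,u)}$ projects to $x_2$, giving $I^{K_2}_u = \bigcup_{(v,u) \in \tilde p^{-1}(u)} \tilde p_{(v,u)}(I^L_{(v,u)})$ as required.

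Finally, for the subdivision case, I would verify the execution-lifting property of \Cref{def:hyb_sub} for $\tilde p$ via a pullback-pasting argument. Given an execution $\chi\colon \tau \to K_2$, the composite $i \circ \chi\colon \tau \to H$ is an execution of $H$, so the subdivision property of $p$ supplies a refinement $\xi\colon \sigma \to \tau$ and an execution $\psi\colon \sigma \to K_1$ exhibiting $\sigma$ as the fiber product of $\tau \xrightarrow{i\chi} H \xleftarrow{p} K_1$. Placing this square alongside the defining square of $L$ so that $i \circ \chi$ factors as $\tau \xrightarrow{\chi} K_2 \xrightarrow{i} H$, the pullback-pasting lemma identifies $\sigma$ with the fiber product of $\tau \xrightarrow{\chi} K_2 \xleftarrow{\tilde p} L$; the induced semiconjugacy $\tilde\chi\colon \sigma \to L$ (with $\tilde i \circ \tilde\chi = \psi$ and $\tilde p \circ \tilde\chi = \chi \circ \xi$) is then an execution realizing the cartesian square demanded by \Cref{def:hyb_sub}. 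The main obstacle is precisely this last step: one must orient the two known pullback squares correctly so that pasting yields the specific cartesian square over $\tilde p$ (rather than over $p$), and confirm that the refinement $\xi$ produced for $p$ serves as the correct vertical leg of the new square; by contrast, the submersion, embedding, and covering claims are routine base-change facts.
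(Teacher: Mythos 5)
Your proposal is correct and follows essentially the same route as the paper's proof: both reduce the submersion and embedding claims to base-change stability applied to the explicit fiber-product construction of \Cref{prop:hybrid_fiber_products}, and both dispatch the subdivision case by pasting the defining cartesian square of $K_1 \times_{p,i} K_2$ against the cartesian square supplied by the subdivision property of $p$ applied to $i \circ \chi$, exactly as in the paper's final diagram. The only differences are ones of detail rather than strategy---your explicit inverse $x_1 \mapsto \left(x_1, i_u^{-1}(p_v(x_1))\right)$ for the embedding claim and your element-wise checks of the epimorphism and covering conditions flesh out steps the paper treats tersely via the tangent-space diagram and the remark that ``the same argument applies.''
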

\begin{proof}
Since the fiber product constructions for graphs used in \Cref{prop:hybrid_fiber_products} is fundamentally a pair of set-theoretic fiber products, the graph morphisms associated with $\tilde p$ and $\tilde i$ are necessarily epic and monic, respectively. Similarly, for each $v = (v_1, v_2) \in V(K_1 \times_{p,f} K_2)$ and $x \in I^{K_1 \times_{p,i} K_2}_v$ we get the commutative diagram of linear maps
\begin{linenomath*}
\[
    \begin{tikzpicture}
        \node (k1) at (0,0) {$T_{\tilde i(x)}I^{K_1}_{v_1}$};
        \node (k2) at (4,2) {$T_{\tilde p(x)}I^{K_2}_{v_2}$};
        \node (h) at (4,0) {$T_{p_{v_1} \circ \tilde i_{v}(x)}I^{H}_{p(v_1) = f(v_2)}$};
        \node (pr) at (0,2) {$T_{j_v(x)} I^{K_1 \times K_2}_{v = (v_1, v_2)}$};
        \node (pb) at (-2, 4) {$T_x I^{K_1 \times_{p,i} K_2}_{v = (v_1, v_2)}$};
        
        \draw +(.5,1) -- +(1,1)  -- +(1,1.5);
        
        \draw[->>,below] (k1) to node {$Tp_{v_1}$} (h);
        \draw[right hook->, right] (k2) to node {$Ti_{v_2}$} (h);
        \draw[right hook->, above right] (pb) to node {$Tj_v$} (pr);
        \draw[->, left, out=270, in=160] (pb) to node {$T\tilde i_{v}$} (k1);
        \draw[->, above, out=0, in=140] (pb) to node {$T\tilde p_{v}$} (k2);
        \draw[->, above] (pr) to node {$T\pi_2$} (k2);
        \draw[->, left] (pr) to node {$T\pi_1$} (k1);
    \end{tikzpicture}
\]
\end{linenomath*}
which shows that $T\tilde i_v$ and $T\tilde p_v$ are respectively injective and surjective linear maps using the usual argument for fiber products of linear maps. 

Moreover, if $p$ is a surjective submersion, then this same argument applied to the appropriate fiber product shows that $\tilde p$ is a surjective submersion. For a subdivision, the additional property follows using the definition of subdvisions and then the pasting law for fiber products in the following diagram:
\begin{linenomath*}
\[
    \begin{tikzpicture}
        \node (s) at (-3,0) {$\sigma$};
        \node (k1) at (3,0) {$K_1$};
        \node (k2) at (0,-2) {$K_2$};
        \node (h) at (3, -2) {$H$};
        \node (fp) at (0, 0) {$K_1 \times_H K_2$};
        \node (t) at (-3, -2) {$\tau$};
        
        \draw +(.25,-.75) -- +(.75,-.75)  -- +(.75,-.25);

        \draw[->, below] (k2) to node {$i$} (h);
        \draw[->] (fp) to (k1);
        \draw[->, left] (fp) to node {$\tilde p$} (k2);
        \draw[->, above, out=30, in=150] (s) to node {$\widetilde{i \circ \chi}$} (k1);
        \draw[->, dashed] (s) to (fp);
        \draw[->, left] (s) to node {$\xi$} (t);
        \draw[->, right] (k1) to node {$p$} (h);
        \draw[->, below] (t) to node {$\chi$} (k2);
    \end{tikzpicture}
\]
\end{linenomath*}
since both the right and outside squares are fiber products. 
\end{proof}

\begin{lemma}
\label{lem:iso_fiber_products}
Let $K_1, K_2, H \in \hs$ be hybrid systems. Suppose we have a hybrid embedding $i\colon K_2 \hookrightarrow H$ and a hybrid subdivision $p\colon K_1 \twoheadrightarrow H$ with resulting fiber product 
\begin{linenomath*}
\[
    \begin{tikzpicture}
        \node (k1) at (0,0) {$K_1$};
        \node (k2) at (3,2) {$K_2$};
        \node (h) at (3,0) {$H$.};
        \node (pb) at (0,2) {$K_1 \times_H K_2$};
        
        \draw +(.5,1) -- +(1,1)  -- +(1,1.5);
        
        \draw[->>,below] (k1) to node {$p$} (h);
        \draw[right hook->, right] (k2) to node {$i$} (h);
        \draw[->, left] (pb) to node {$\tilde i$} (k1);
        \draw[->, above] (pb) to node {$\tilde p$} (k2);
    \end{tikzpicture}
\]
\end{linenomath*}
If $i(K_2) \subset I(H)$ is an attracting set with isolating neighborhood  $W \subset I(H)$, then $\tilde{i}(K_1 \times_H K_2) \subset I(K_1)$ is an attracting set with isolating neighborhood $\tilde W := p^{-1}(W) \subset I(K_1)$.  Moreover, if $W$ is a trapping region, then so is $\tilde W$.
\end{lemma}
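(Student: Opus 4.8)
The plan is to reduce the whole statement to the claim that, for a hybrid subdivision $p\colon K_1 \to H$, the preimage $p^{-1}(A)$ of an attracting set $A \subset I(H)$ is again an attracting set with isolating neighborhood $p^{-1}(W)$. First I would identify the objects in play: from the explicit fiber product construction of \Cref{prop:hybrid_fiber_products}, on state spaces $\tilde i$ sends $K_1 \times_H K_2$ to the set of first coordinates of pairs $(x_1,x_2)$ with $p(x_1) = i(x_2)$, so that $\tilde i(K_1 \times_H K_2) = p^{-1}(i(K_2)) = p^{-1}(A)$, where $A := i(K_2)$ (here $\tilde i$ is an embedding by \Cref{prop:temp_ench_fiber_products}, but only the subdivision $p$ is really needed). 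Write $\tilde A := p^{-1}(A)$ and $\tilde W := p^{-1}(W)$.

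The engine of the proof is a two-way correspondence between infinite executions of $K_1$ and of $H$. In the forward direction, postcomposing any $\chi \in \infexc_{K_1}(x)$ with $p$ yields $p\circ\chi \in \infexc_H(p(x))$ over the same hybrid time trajectory, and since $I(p)$ commutes with taking time-$t$ points, $(p\circ\chi)(t) = p(\chi(t))$ for all $t$. In the backward direction, the definition of subdivision (\Cref{def:hyb_sub}) together with \Cref{lem:subdivision_executions_fibers} says that any $\psi \in \infexc_H(y)$ pulls back along a refinement $\xi\colon \sigma \to \tau$ to an execution $\tilde\psi$ of $K_1$ with $\tilde\psi(t) = p^{-1}(\psi(t))$; here $\tilde\psi$ is infinite because $\xi$ preserves time-$t$ points (\Cref{lem:ss_time}), and its starting point $x$ satisfies $p(x) = y$ since $\xi$ fixes the starting point and $p\circ\tilde\psi = \psi\circ\xi$.

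With this correspondence the attracting-set part falls out cleanly. Positive invariance of $\tilde A$: push any $\chi \in \infexc_{K_1}(x)$ with $x \in \tilde A$ forward and use positive invariance of $A$ to get $p(\chi(t)) \subset A$, hence $\chi(t) \subset \tilde A$. That $\tilde W$ is a neighborhood of $\tilde A$ is immediate from continuity of $I(p)$. For the isolating-neighborhood identity I would compute $\bigcap_{t \ge 0}\{\chi(t) \mid x \in \tilde W,\ \chi \in \infexc_{K_1}(x)\}$ and show it equals $p^{-1}(A)$: the inclusion ``$\subseteq$'' pushes a $K_1$-witness forward to an $H$-witness landing in $A = \bigcap_t\{\cdots\}$, while ``$\supseteq$'' uses the backward correspondence, namely that if $p(u) \in A$ then for each $t$ a witnessing $H$-execution through $p(u)$ based in $W$ pulls back to a $K_1$-execution through $u$ based in $\tilde W$. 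Invoking the hypothesis that $W$ is isolating for $A$ closes the loop.

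For the trapping-region claim I would verify the three conditions of \Cref{def:trapping} for $\tilde W$ with the same absorbing time $T$. Condition (2) (positive invariance) is exactly the argument above. Condition (3) is a pushforward argument using continuity of $I(p)$: since $\overline{\tilde W} \subset p^{-1}(\overline W)$ and $p^{-1}(W^\circ) \subset \tilde W^\circ$, pushing $\chi$ from $x \in \overline{\tilde W}$ forward gives $(p\circ\chi)(t) \subset W^\circ$ for $t \ge T$, whence $\chi(t) \subset p^{-1}(W^\circ) \subset \tilde W^\circ$. The one genuinely delicate point—and where I expect the main obstacle—is condition (1) (nonblockingness on $\tilde W$): pulling an infinite $H$-execution through a chosen $w \in \tilde W$ back via \Cref{lem:subdivision_executions_fibers} produces an execution containing $w$ only as a time-$0$ point, not necessarily as its starting point. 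I would resolve this by truncating the finitely many zero-duration initial jumps (finitely many, since an infinite trajectory is non-Zeno) and reindexing to obtain an infinite execution based exactly at $w$, so that $\infexc_{K_1}(w) \neq \emptyset$.
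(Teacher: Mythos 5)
Your proof is correct and follows essentially the same route as the paper's: the identification $\tilde i(K_1 \times_H K_2) = p^{-1}(i(K_2))$, the two-way correspondence between infinite executions of $K_1$ and of $H$ furnished by pushforward along $p$ and pullback via \Cref{lem:ss_time} and \Cref{lem:subdivision_executions_fibers}, and a direct verification of the conditions of \Cref{def:trapping}. If anything, you are more careful than the paper at two points it treats tersely: the explicit check of positive invariance of $\tilde i(K_1 \times_H K_2)$ (required by \Cref{def:iso_inv} but not verified in the paper's proof), and the truncation of the finitely many initial zero-duration jumps to obtain a pulled-back execution based exactly at a given $w \in \tilde W$, which is the starting-point issue the paper dismisses with the one-line remark that a refinement of an infinite hybrid time trajectory is still infinite.
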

\begin{proof}
To see that $\tilde W$ is an isolating neighborhood for $\tilde{i}(K_1 \times_H K_2)$, we need to check that 
\begin{linenomath*}
\[
    \tilde{i}(K_1 \times_H K_2) = \bigcap_{t > 0} \left\{\chi(t) \mid x \in \tilde W \text{ and } \chi \in \infexc_{K_1}(x)\right\}.
\]
\end{linenomath*}
Suppose that $y \in I(K_1 \times_H K_2)$.  Since $i(\tilde p(y)) \in i(K_2)$, for every $t > 0$ there exists an $x \in W$ and $\chi \in \infexc_H(x)$ such that $i(\tilde p(y)) \in \chi(t)$.   By the commutativity of the fiber product square, we have $p ( \tilde i (y)) = \chi(t)$.  Let $\tilde{\chi}$ be the fiber product of $\chi$ along $p$ as in \Cref{def:hyb_sub}. By \Cref{lem:ss_time}, we have $p^{-1}(\chi(t)) \in \tilde\chi(t)$.  Thus, $\tilde i (y) \in \tilde \chi (t)$.   Since $\tilde \chi \in \infexc_{K_1}(p^{-1}(x))$, we have 
$
\tilde i (y) \in \bigcap_{t > 0} \left\{\chi(t) \mid x \in \tilde W \text{ and } \chi \in \infexc_{K_1}(x)\right\}.
$

Conversely, suppose that
$y \in \bigcap_{t > 0} \left\{\chi(t) \mid x \in \tilde W \text{ and } \chi \in \infexc_{K_1}(x)\right\}.$
We note that $z \in \chi(t)$ for some $x \in \tilde W$, $\chi \in \infexc_{K_1}(x)$, and  $t > 0$ implies that $p(z) \in (p \circ \chi)^*(t)$ and $(p \circ \chi)^* \in \infexc_{H}(p(x))$.  Thus,
\begin{linenomath*}
\[
    p(y)  \in \bigcap_{t > 0} \left\{\chi(t) \mid x \in W \text{ and } \chi \in \infexc_{H}(x)\right\}.
\]
\end{linenomath*}
Since $W$ is an isolating neighborhood for $i(K_2)$, it follows that $p(y) \in i(K_2)$.    Let $x \in I(K_2)$ be the unique point such that $i(x) = p(y)$.  Then because $I(K_1 \times_H K_2) = I(K_1) \times_{I(H)} I(K_2)$ in $\sets$, we get an element $z \in I(K_1 \times_H K_2)$ such that $\tilde i (z) = y$ and $\tilde p (z) = x$.  Thus, $y \in \tilde{i} (K_1 \times_H K_2)$.

Now suppose that $W$ is a trapping region. To see that $\tilde{W}$ is also a trapping region, we need to check the three conditions of \Cref{def:trapping}.

Condition (1) follows from the fact that a refinement of an infinite hybrid time trajectory is still infinite.  

For condition (2), let $x \in \tilde{W}$ and $\chi \in \infexc_{K_1}(x)$.  Suppose there exists a $t \geq 0$ such that $y \in \chi(t) \setminus W$.  Then $\tilde p(y) \not \in \tilde p(\tilde W) = W$, where the last equality follows from the fact that $I(p)$ is surjective.  Thus $(\tilde p \circ \chi)^*(t) \not\subset W$, a contradiction.

For condition (3), let $T>0$ be the lower bound for the trapping region $W$.  We claim that $T$ works for $\tilde W$ as well.  Suppose not.  Then there exists $t > T$ and $y \in \chi(t) \setminus \tilde W^\circ$.  Thus there exists an open set $U \subset W$ such that $y \not \in U$, hence $p(y) \not\in \tilde p(U) \subset W$.  The submersion condition for $p$ implies that $I(\tilde p)$ is an open map, hence $\tilde p(U)$ is open, so $\tilde p(y) \not\in W^\circ$.  Since $\tilde p(y) \in (\tilde p \circ \chi)^*(t)$, this is a contradiction.
\end{proof}

\begin{lemma}
\label{lem:atat}
Let $A \overset{i}{\hookrightarrow} B \overset{j}{\hookrightarrow} C$ be hybrid embeddings.  Suppose that $i(A)$ is an attracting set in $B$, and $j(B)$ is an attracting set in $C$.   Then $(j \circ i)(A)$ is an isolated invariant  set in $C$.
\end{lemma}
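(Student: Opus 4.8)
The plan is to produce, from the two given isolating neighborhoods, a single isolating neighborhood in $C$ whose maximal invariant set is exactly $(j\circ i)(A)$; this is weaker than attraction, and I will deliberately avoid claiming that $(j\circ i)(A)$ attracts a $C$-neighborhood. Throughout I identify $A$ with $i(A)\subset I(B)$ and $B$ with $j(B)\subset I(C)$ via the embeddings, and I fix a forward isolating neighborhood $W_A\subset I(B)$ for $i(A)$ and $W_B\subset I(C)$ for $j(B)$ in the sense of \Cref{def:iso_inv}.

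First I would record two transfer facts along the embedding $j$. Since $j(B)$ is positively invariant in $C$ and $j$ is a hybrid embedding, any $\chi\in\infexc_C(x)$ with $x\in j(B)$ has image contained in $j(B)$ and is of the form $\chi=j\circ\psi$ for a unique $\psi\in\infexc_B(j^{-1}(x))$; conversely every execution of $B$ pushes forward to one of $C$ landing in $j(B)$. Combining this correspondence with the positive invariance of $i(A)$ in $B$, a short diagram chase shows $(j\circ i)(A)$ is positively invariant in $C$: an execution issuing from $(j\circ i)(A)$ stays in $j(B)$, pulls back to an execution of $B$ issuing from $i(A)$, remains in $i(A)$, and pushes back into $(j\circ i)(A)$. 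Moreover, because $i(A)$ and $j(B)$ are attracting sets they are the maximal invariant sets in $W_A$ and $W_B$, so each point of $(j\circ i)(A)$ carries a complete execution contained in $(j\circ i)(A)$; hence $(j\circ i)(A)$ is invariant (not merely positively invariant).

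Next I would build the isolating neighborhood. As $j$ is a hybrid embedding, $j(W_A^\circ)$ is open in the subspace $j(B)$, so there is an open $U\subset I(C)$ with $U\cap j(B)=j(W_A^\circ)$; set $N:=U\cap W_B^\circ$. Then $N$ is an open neighborhood of $(j\circ i)(A)$ with $N\subset W_B$ and $N\cap j(B)\subset j(W_A)$. The heart of the argument is to verify $\operatorname{Inv}(N)=(j\circ i)(A)$, where $\operatorname{Inv}(N)$ denotes the maximal invariant set in $N$. The inclusion $(j\circ i)(A)\subset\operatorname{Inv}(N)$ is immediate from the previous paragraph. For the reverse inclusion, take $x\in\operatorname{Inv}(N)$ with a complete execution $\gamma\subset N$. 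Each point $\gamma(s)$ is, for every $t\ge 0$, the time-$t$ image of $\gamma(s-t)\in N\subset W_B$; by the defining formula of \Cref{def:iso_inv} for $W_B$ this forces $\gamma(s)\in j(B)$. Thus $\gamma\subset N\cap j(B)\subset j(W_A)$, and transferring $\gamma$ through $j^{-1}$ yields a complete execution in $W_A\subset I(B)$; the same reachable-at-all-times argument applied to $W_A$ then pins every point into $i(A)$. Hence $x\in(j\circ i)(A)$, so $\operatorname{Inv}(N)=(j\circ i)(A)$ and $(j\circ i)(A)$ is an isolated invariant set in $C$.

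I expect the main obstacle to be the hybrid bookkeeping behind the two transfer facts: precisely matching infinite executions of $C$ that are trapped in the invariant subsystem $j(B)$ with executions of $B$ across $j$ (including $\chi$-trivial resets and the endpoint conventions of \Cref{def:htt}), and justifying that the forward isolating-neighborhood formula, which controls only forward time-$t$ images, nevertheless pins every point of a two-sided complete execution into the attracting set --- this is exactly where the backward segment of $\gamma$ is indispensable. A secondary technical point is confirming that an attracting set in the sense of \Cref{def:iso_inv} is invariant (carries complete executions), which I would obtain by a diagonal/compactness extraction of backward orbits from the intersection defining the attracting set.
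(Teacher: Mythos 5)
Your construction of the candidate neighborhood is exactly the paper's --- an open $U \subset I(C)$ with $U \cap j(B) = j(W_A^\circ)$, intersected with $W_B$ --- but the property you then verify is not the one the paper proves or needs. Despite the phrase ``isolated invariant set'' in the statement, the only isolation notion defined in this paper is the forward one of \Cref{def:iso_inv}, and the paper's (admittedly terse) proof concludes that $U \cap W_B$ is an isolating neighborhood in precisely that sense, i.e., that $(j \circ i)(A)$ is an \emph{attracting set} in $C$; this is exactly what \Cref{thm:ta_comp} extracts from the lemma (``it follows from \Cref{lem:atat} that $S$ is an attracting set in $A_2$''). Your deliberately weaker two-sided Conley property $\operatorname{Inv}(N) = (j\circ i)(A)$ neither coincides with nor implies the \Cref{def:iso_inv} condition: membership in $\bigcap_{t \geq 0}\left\{\chi(t) \mid x \in N,\ \chi \in \infexc_C(x)\right\}$ requires, for each separate $t$, only a length-$t$ history starting in $N$ (and free to wander outside $N$ in between), not a single complete orbit contained in $N$. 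So even if your argument were airtight, it would prove a different statement than the one the paper uses downstream.

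Moreover, on its own terms the argument has a step that fails: the claim that $(j \circ i)(A)$ is invariant, i.e., that each of its points carries a complete execution inside it, which you need for the inclusion $(j\circ i)(A) \subset \operatorname{Inv}(N)$. In this framework attracting sets are only \emph{positively} invariant; executions are forward-time objects (\Cref{def:htt}, \Cref{def:execution}), no compactness is assumed anywhere (active sets are arbitrary subsets of manifolds), and the ``diagonal/compactness extraction'' of backward orbits you invoke is unavailable because backward branching can be infinite. Concretely, the discrete hybrid system on the $0$-manifold $\{a, b\} \sqcup \{(i,j) \mid 1 \le j \le i\}$ with resets $(i,j) \mapsto (i,j-1)$, $(i,1) \mapsto a$, $a \mapsto b$, $b \mapsto b$ has $\{a,b\}$ attracting in the sense of \Cref{def:iso_inv} (take $N$ to be the whole space), yet $a$ admits finite histories of every length and no infinite one, so $\{a,b\} \not\subset \operatorname{Inv}(N)$ and your claimed equality collapses. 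Separately, your two ``transfer facts'' are asserted rather than proved, and the pullback direction is genuinely delicate: a hybrid embedding does not prevent $C$ from carrying resets on $j(B)$ with no $B$-counterpart, so a $C$-execution trapped in $j(B)$ need not factor through $j$; the paper's proof leaves its own verification implicit, but it only transports the neighborhood and never relies on pulling executions back.
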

\begin{proof}
Let $W_A \subset B$ be an isolating neighborhood for $A$ in $B$, and let $W_B \subset C$ be an isolating neighborhood for $B$ in $C$.  Since $i$ is an embedding, it induces a homeomorphism between $A$ and $i(A)$.  In particular, there exists an open set $U \subset I(B)$ such that $U \cap A = W_A^\circ$.    Then $U \cap W_B$ is an isolating neighborhood for $A$ in $C$. 
\end{proof}

\begin{thm}
\label{thm:ta_comp}
Template-anchor pairs are weakly associatively composable. 
\end{thm}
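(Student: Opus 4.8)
The plan is to realize hierarchical composition as the usual composition of spans by fiber product and to show that the class of template--anchor spans is closed under this operation, so that the standard weak associativity of span composition descends to it. Concretely, suppose we are given two composable template--anchor pairs sharing a middle system $M$, say $T \xleftarrow{p_1} S_1 \xrightarrow{i_1} M$ and $M \xleftarrow{p_2} S_2 \xrightarrow{i_2} A$, where $M$ is simultaneously the anchor of the first pair and the template of the second. Since $p_2$ is a hybrid subdivision and hence a hybrid submersion, \Cref{prop:hybrid_fiber_products} guarantees that the fiber product $S_1 \times_M S_2$ exists in $\hs$, fitting into a cartesian square with projections $\tilde p_2 \colon S_1 \times_M S_2 \to S_1$ (the pullback of $p_2$ along $i_1$) and $\tilde i_1 \colon S_1 \times_M S_2 \to S_2$ (the pullback of $i_1$ along $p_2$). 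I define the composite span to be $T \xleftarrow{p_1 \circ \tilde p_2} S_1 \times_M S_2 \xrightarrow{i_2 \circ \tilde i_1} A$.

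First I would verify that this composite is again a template--anchor pair. That the left leg is a subdivision follows from \Cref{prop:temp_ench_fiber_products}, which tells us $\tilde p_2$ is a subdivision (a pullback of the subdivision $p_2$), together with \Cref{lem:subd_compose}, which says subdivisions compose; hence $p_1 \circ \tilde p_2$ is a subdivision. That the right leg is an embedding follows from \Cref{prop:temp_ench_fiber_products}, which gives that $\tilde i_1$ is a hybrid embedding (a pullback of the embedding $i_1$), together with the closure of hybrid embeddings under composition; hence $i_2 \circ \tilde i_1$ is an embedding.

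The attracting--set condition is the heart of the argument, and I would obtain it by chaining the two embeddings $S_1 \times_M S_2 \xrightarrow{\tilde i_1} S_2 \xrightarrow{i_2} A$. Applying \Cref{lem:iso_fiber_products} to the embedding $i_1\colon S_1 \hookrightarrow M$ and the subdivision $p_2\colon S_2 \twoheadrightarrow M$---using that $i_1(S_1)$ is attracting in $M$ by hypothesis---shows that $\tilde i_1(S_1 \times_M S_2)$ is an attracting set in $S_2$, with isolating neighborhood $p_2^{-1}(W)$ for a choice of isolating neighborhood $W$ of $i_1(S_1)$ in $M$. Since $i_2(S_2)$ is attracting in $A$ by hypothesis, \Cref{lem:atat} then produces an isolating neighborhood for $(i_2 \circ \tilde i_1)(S_1 \times_M S_2)$ in $A$ (its proof constructs exactly such a neighborhood), so this image is an attracting set in $A$ in the sense of \Cref{def:iso_inv}. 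This completes the verification that the composite is a template--anchor pair.

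Finally, for weak associativity I would appeal to the standard weak associativity of composition of spans via iterated fiber products. Given three composable pairs with roofs $S_1, S_2, S_3$ over middle systems $T_1, T_2$, the two bracketings produce $(S_1 \times_{T_1} S_2) \times_{T_2} S_3$ and $S_1 \times_{T_1} (S_2 \times_{T_2} S_3)$, and the pasting law for fiber products supplies a canonical isomorphism between them compatible with both legs, which is precisely weak associativity. The main obstacle I anticipate is bookkeeping: keeping straight which leg plays the submersion role and which the embedding role when invoking \Cref{prop:temp_ench_fiber_products}, \Cref{lem:iso_fiber_products}, and \Cref{lem:atat}, and confirming that every fiber product appearing in the three-fold composition exists---each time the pulled-back left leg must remain a subdivision (hence a hybrid submersion) so that \Cref{prop:hybrid_fiber_products} continues to apply at the next stage.
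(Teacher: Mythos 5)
Your proposal is correct and takes essentially the same route as the paper's proof: it forms the composite span via the fiber product of \Cref{prop:temp_ench_fiber_products}, verifies the subdivision leg via \Cref{lem:subd_compose}, the embedding leg via closure of embeddings under composition, and the attracting-set condition by chaining \Cref{lem:iso_fiber_products} with \Cref{lem:atat}, with weak associativity coming from uniqueness of fiber products up to isomorphism. If anything, your direct appeal to the attracting-set hypothesis is slightly cleaner than the paper's passing (and unneeded) mention of a trapping region for $S_2$.
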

\begin{proof}
Using \Cref{prop:temp_ench_fiber_products}, we can (weakly, since fiber products are only unique up to isomorphism) associatively compose two compatible template anchor pairs $T_1 \xleftarrow{p_1} S_1 \xrightarrow{i_1} A_1$ and $A_1 \xleftarrow{p_2} S_2 \xrightarrow{i_2} A_2$ to get a span  $T_1 \xleftarrow{p} S \xrightarrow{i} A_2$ as in this diagram:
\begin{linenomath*}
\[
    \begin{tikzpicture}
        \node (t1) at (0,0) {$T_1$};
        \node (s1) at (1.5,1.5) {$S_1$};
        \node (a1) at (3,0) {$A_1$};
        \node (s2) at (4.5,1.5) {$S_2$};
        \node (a2) at (6,0) {$A_2$};
        \node (fp) at (3,3) {$S = S_1 \times_{i_1, p_2} S_2$};
        
        \draw +(2.5,2) -- +(3,1.5)  -- +(3.5,2);

        \draw[->>, above left] (s1) to node {$p_1$} (t1);
        \draw[right hook->, below left] (s1) to node {$i_1$} (a1);
        \draw[->>, below right] (s2) to node {$p_2$} (a1);
        \draw[right hook->, above right] (s2) to node {$i_2$} (a2);
        \draw[->>, above left] (fp) to node {$\tilde p_2$} (s1);
        \draw[right hook->, above right] (fp) to node {$\tilde i_1$} (s2);
        \draw[->>, left, out=190, in=100] (fp) to node {$p = p_1 \circ \tilde p_2$} (t1);
        \draw[right hook->, out=350, in=80,right] (fp) to node {$i = i_2 \circ \tilde i_1$} (a2); 
    \end{tikzpicture}
\]
\end{linenomath*}
It remains to show that $T_1 \xleftarrow{p} S \xrightarrow{i} A_2$ is a template-anchor pair. By \Cref{prop:temp_ench_fiber_products} and \Cref{lem:subd_compose}, the semiconjugacy $p$ is a hybrid subdivision.  Since the composition of hybrid embeddings is a hybrid embedding, the semiconjugacy $i$ is a hybrid embedding.

Let $U \subset I(A_2)$ be a hybrid trapping region with trapped attracting set $S_2$. Since $S_1$ is attracting in $A_1$, \Cref{lem:iso_fiber_products} implies that $S$ is an attracting set in $S_2$.  It follows from \Cref{lem:atat} that $S$ is an attracting set in $A_2$.
\end{proof}

\section{Sequential composition}  \label{sec:sequential}

The aim of this section is to define sequential composition of a well-behaved class of ``directed'' hybrid systems.  The key condition in the definition of a directed system below is the $(\varepsilon,T)$-chain condition, which, roughly speaking, says that every generalized trajectory of a directed system ends up in its final subsystem.  The corresponding measure-theoretic condition is that almost all points flow into the final subsystem.  Alternatively, there is a topological notion of directed system: an open, dense set flows into the final system.  However, as \Cref{ex:directed} demonstrates, neither the measure-theoretic nor the topological condition behaves well with respect to sequential composition, whereas $(\varepsilon,T)$-chains do.

\subsection{\texorpdfstring{$(\varepsilon,T)$-chains}{(epsilon,T)-chains}}
We now define hybrid $(\varepsilon,T)$-chains, a notion of generalized execution adapted from \cite{conley1978isolated}.  To define such chains for $\varepsilon > 0$, we need the additional data of an extended metric.

\begin{defn}
A {\bf metric hybrid system} is a pair $(H, \dist)$ consisting of a hybrid system $H$ and an extended metric $\dist$ on $I(H)$ compatible with the topology.
\end{defn}

As motivation for the following definition, we point out its similarity to our notion of an $H$-execution (\Cref{def:execution}) once the data of the defining semiconjugacy $\tau \to H$ is completely spelled out.  

\begin{defn}
\label{def:chain}
 Let $(H,d)$ be a metric hybrid system, and $\varepsilon, T \in [0,\infty]$.  An {\bf $\bm{(\varepsilon, T)}$-chain}  in $H$ is a triple $\chi = (\tau, \nu, \varphi)$ where 
\begin{itemize}
\item $\tau = ((\tau_j)_{j=0}^N, c_\tau)$ is a hybrid time trajectory;
\item $\nu = (\nu_j)_{j = 0}^{N-1} \subset V(H)$ is a sequence of vertices;
\item $\varphi = (\varphi_j)_{j = 0}^{N-1}$ is a sequence of smooth maps $\varphi_j\colon I_{v_j}^\tau \to I_{\nu_j}^H$
\end{itemize}
such that
\begin{enumerate}[(i)]
  \item for each $0 \le j < N $,
    \begin{itemize}
      \item $\varphi_j([\tau_{j}, \tau_{j+1})) \subset F_{\nu_j}$; and
      \item the restriction $\varphi_j|_{[\tau_{j}, \tau_{j+1})}\colon [\tau_{j}, \tau_{j+1}) \to F_{\nu_j}$ is an integral curve for the vector field $X^H_{\nu_j}$;
    \end{itemize}
  \item for each $1 \le j < N-1$, there is an element $u_j \in E(H)\sqcup V(H)$ with $\src(u_j) = \nu_j$ and $\tgt(u_j) = \nu_{j+1}$ such that
    \begin{itemize}
        \item if $u_j \in E(H)$, then $\varphi_{j-1}(\tau_j) \in Z_{u_j}$
        \item if $u_j \in V(H)$, then $\varphi_{j-1}(\tau_j) \in F_{u_j}$
        \item letting $r_{u_j}\colon F_{u_j} \to I_{u_j}$ be the inclusion if $u_j \in V(H)$,
        \begin{linenomath*}
        \[
        \dist(\varphi_{j+1}(\tau_{j+1}), r_{u_j}(\varphi_j(\tau_{j+1}))) \le \varepsilon, 
        \]
        \end{linenomath*}
    \end{itemize}
    \item if $\tau'_k$ is the subsequence of $\tau_j$ for which $j = 0$ or $u_j \in V(H)$, then
    \begin{linenomath*}
    \[
        \tau'_{k} - \tau'_{k-1} \geq T
    \]
    \end{linenomath*}
    for all $k \geq 1$.
\end{enumerate}
We say that $\varphi_0(0)$ is the {\bf starting point} of $\chi$. We denote the set of $(\varepsilon, T)$-chains in $H$ (with starting point $x$) by $\conleyet_H$ and $\conleyet_H(x)$, respectively. If $\tau$ has an endpoint, we say that $\chi$ is a chain from $\varphi_0(0)$ to $\varphi_{N-1}(\tau)$.  We denote the set of $(\varepsilon, T)$-chains from $x$ to $y$ by $\conleyet_H(x,y)$.

For $\chi_x = (\tau, \nu, \varphi) \in  \conleyet_H(x)$ and $t > 0$, we write
\begin{linenomath*}
\[
    \chi_x(t) = \{\varphi_j(t) \mid 0 \le j < N \text{ and } \tau_j \le t \le \tau_{j+1} \}.
\]
\end{linenomath*}
We will sometimes write $\chi^t_x$ for $\chi_x(t)$ to avoid a proliferation of parentheses. We emphasize that $\chi_x^{\tau_j}$ is possibly sequence-valued (i.e., a totally ordered countable set of cardinality greater than one) if $\tau_j$ is a jump time, but that if $t \neq \tau_j$ for any $j$, then $\chi_x^t$ is a single point in $I(H)$.    
\end{defn}

A basic property of $(\varepsilon,T)$-chains is their compatibility with semiconjugacy. More precisely, every hybrid semiconjugacy $\alpha\colon H \to K$ between metric hybrid systems induces a set-map $\hat{\alpha}\colon \conley_H^{\varepsilon,T} \to \conley_K^{\omega(\varepsilon),T}$, where $\omega\colon[0, \infty] \to [0,\infty]$ is the modulus of continuity of $I(\alpha)$,  given by 
\begin{linenomath*}
\[
    \hat{\alpha}(\tau, \nu, \phi) = (\tau, (\alpha(\nu_j))_j, (\alpha_{\nu_j} \circ \varphi_j)_j).
\]
\end{linenomath*}

Specializing to $\varepsilon = 0$, \Cref{def:chain} becomes independent of the extended metric $\dist$.  One expects executions to correspond to $(0,0)$-chains, and fundamental executions to correspond to $(0,\infty)$-chains (which correspond to executions in the sense of \cite{lygeros:executions}). Modulo technical details due to the formulation using smooth sets, this is the case. Indeed, by comparing Definitions  \ref{def:htt} and \ref{def:chain}, we get a version of the main theorem of \cite{lerman}:

\begin{thm}
\label{thm:push_forward_executions}
For any metric hybrid system $H$, there exists a surjection
\begin{linenomath*}
\[
    e_H\colon \bigcup_{\tau} ~\Hom_{\hs}(\tau, H) \to \conley^{0,0}_H
\]
\end{linenomath*}
given by $(\chi\colon \tau \to H) \mapsto (\tau, (\chi(v_j))_j, (\chi_{v_j})_j)$.   Moreover, the following hold:
\begin{itemize}
    \item $\hat{\alpha} \circ e_H = e_K \circ \alpha$ for any semiconjugacy $\alpha\colon H \to K$, where $\alpha$ acts on $ \bigcup_{\tau} ~\Hom_{\hs}(\tau, H)$ by postcomposition
    \item for any executions $\chi, \chi' \in \exc_H$ , we have $e_H(\chi) = e_H(\chi')$ if and only if $I(\chi) = I(\chi')$ 
    \item an execution $\chi \in \exc_H$ is fundamental if and only if $e_H(\chi) \in \conley^{0,\infty}_H$
\end{itemize}
\end{thm}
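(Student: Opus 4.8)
The plan is to treat $e_H$ as nothing more than the operation that spells out the data of a hybrid semiconjugacy $\chi\colon\tau\to H$ — a graph morphism together with a family of smooth maps — as the triple $(\tau,(\chi(v_j))_j,(\chi_{v_j})_j)$, and to check that, term by term, the defining conditions of a semiconjugacy out of a hybrid time trajectory (\Cref{def:htt}, \Cref{def:hyb_sem}) are literally the defining conditions of a $(0,0)$-chain (\Cref{def:chain}). Concretely, I would first verify that $e_H$ is well defined and lands in $\conley^{0,0}_H$: the vertex sequence $\nu_j:=\chi(v_j)$ is legitimate because graph morphisms send vertices to vertices; condition (i) of \Cref{def:chain} is exactly condition (1) of \Cref{def:hyb_sem} applied to the flow sets $F_{v_j}=[\tau_j,\tau_{j+1})$ of $\tau$, where a smooth semiconjugacy out of $(\mathbb R,\tfrac{d}{dt})$ is by definition an integral curve of $X^H_{\nu_j}$; condition (ii) with $\varepsilon=0$ is the commuting reset square of condition (2) of \Cref{def:hyb_sem} (taking $u_j:=\chi(e_j)$ and using the convention that a reset to a vertex is the flow-set inclusion); and condition (iii) is vacuous when $T=0$. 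Surjectivity is the reverse bookkeeping: given a $(0,0)$-chain, I would define a graph morphism by $v_j\mapsto\nu_j$ and $e_j\mapsto u_j$ (choosing any witness $u_j$ from condition (ii)) and continuous maps $\chi_{v_j}:=\varphi_j$, and then read conditions (i)--(ii) backwards to confirm the result is a genuine semiconjugacy $\tau\to H$ with $e_H(\chi)=(\tau,\nu,\varphi)$.

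Naturality ($\hat\alpha\circ e_H=e_K\circ\alpha$) I would dispatch by direct computation on an execution $\chi\colon\tau\to H$: the left side gives $(\tau,(\alpha(\chi(v_j)))_j,(\alpha_{\chi(v_j)}\circ\chi_{v_j})_j)$ by the formula for $\hat\alpha$, while the right side, using that $\alpha$ acts by postcomposition and that composition of semiconjugacies is computed vertexwise as $(\alpha\circ\chi)(v_j)=\alpha(\chi(v_j))$ and $(\alpha\circ\chi)_{v_j}=\alpha_{\chi(v_j)}\circ\chi_{v_j}$, yields the identical triple. One also notes $\omega(0)=0$ for the modulus of continuity, so that $\hat\alpha$ indeed restricts to a map $\conley^{0,0}_H\to\conley^{0,0}_K$.

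For the two ``moreover'' clauses I would argue as follows. The forward implication of the middle clause is immediate, since equal smooth maps have equal underlying set maps, so $e_H(\chi)=e_H(\chi')$ forces $I(\chi)=I(\chi')$. The substance is the converse: from $I(\chi)=I(\chi')$ I recover $\tau=\tau'$ (the common domain $I(\tau)$ determines the intervals $[\tau_j,\tau_{j+1}]$ and the endpoint type $c_\tau$, hence $\tau$), the vertex sequence $\nu=\nu'$ (each interval has connected image lying in a unique component $I_{\nu_j}$ of $I(H)=\bigsqcup_v I_v$), and the set maps $\varphi_j=\varphi'_j$; the real point is then to upgrade equality of set maps to equality of smooth maps (germ classes). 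Here I would invoke uniqueness of integral curves: on $F_{v_j}=[\tau_j,\tau_{j+1})$ a representative of $\varphi_j$ satisfies $X^H_{\nu_j}\circ\tilde\varphi_j=T\tilde\varphi_j\circ\tfrac{d}{dt}$ on a neighborhood, so its germ along the flow set is pinned down by a single value together with $X^H_{\nu_j}$, and the germ at a closed endpoint follows by continuity. For the last clause, I would match the trivial resets of $\chi$ — edges $e_j$ with $\chi(e_j)\in V(H)$ — with the vertex-transitions $u_j\in V(H)$ of the associated chain: taking the canonical witnesses $u_j=\chi(e_j)$, the subsequence $\tau'$ of \Cref{def:chain}(iii) collects exactly the trivial-reset times, and since these are finite while the $T=\infty$ gap condition demands infinite gaps, $e_H(\chi)\in\conley^{0,\infty}_H$ precisely when there are no trivial resets, i.e. when $\chi$ is fundamental.

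I expect the middle clause to be the main obstacle, because it is the only place where one must reconstruct the full smooth (germ) data of an execution from its bare continuous map — everything else is bookkeeping. The integral-curve argument handles interior points and closed endpoints cleanly, but the genuinely delicate case is a degenerate mode with $\tau_j=\tau_{j+1}$ (so $F_{v_j}=\emptyset$ and $I_{v_j}$ is a single point), where the set map carries no flow information and the germ is a priori unconstrained; I would need to check that the incoming and outgoing reset squares, or a normalization of the smooth data at such instantaneous modes, force the germ and so keep $e_H$ injective on the continuous data. A secondary subtlety is that the witness $u_j$ in \Cref{def:chain}(ii) is existentially quantified, so in the last clause I must ensure that admitting an edge-witness cannot silently ``repair'' a trivial reset; this is precisely where the disjointness of $F_v$ from the guards $Z_e$ (as in \Cref{def:deterministic}) is the natural hypothesis making the vertex/edge dichotomy unambiguous.
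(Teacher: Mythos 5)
Your overall strategy---unwinding Definitions \ref{def:htt}, \ref{def:hyb_sem} and \ref{def:chain} and matching conditions term by term---is exactly the approach the paper takes (it offers no proof beyond ``comparing Definitions \ref{def:htt} and \ref{def:chain}''), and your treatment of well-definedness, naturality, and the forward direction of the third bullet is fine. The genuine gap is in your plan for the middle bullet, and it stems from reading the chain data $\varphi_j$ as germ classes. You correctly isolate the delicate case---an instantaneous mode, where the set map carries no flow information---but your proposed repair, that the incoming and outgoing reset squares or some normalization ``force the germ,'' is false, and the paper itself says so in the paragraph immediately following the theorem: for $\tau = ((0,0,0),\text{closed})$, when $\chi_{v_0}(z)$ lies in a guard set ``there are no other constraints on this choice of extension.'' The reason is that reset maps are arbitrary functions (no continuity or smoothness is imposed; see the footnote to \Cref{def:HybridSystem}), so a commuting reset square constrains only the \emph{value} $\chi_{v_0}(z)$, never the germ; distinct germ classes therefore give distinct semiconjugacies with identical point data, and no argument can ``keep $e_H$ injective on the continuous data.'' The same issue appears even at non-degenerate modes: the smooth-semiconjugacy condition of \Cref{def:hyb_sem} pins the germ only along $F_{v_j} = [\tau_j, \tau_{j+1})$, leaving the germ at the closed right endpoint (the guard point) unconstrained.

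The only reading under which the middle bullet (and, in fact, surjectivity) is true is that an $(\varepsilon,T)$-chain is determined by its underlying point data: every condition in \Cref{def:chain} constrains only the values of the $\varphi_j$, so equality in $\conley^{0,0}_H$ means equality of $(\tau,\nu)$ together with the $\varphi_j$ as set maps. Under that reading the converse of the middle bullet is pure bookkeeping ($I(\chi)=I(\chi')$ forces $\tau=\tau'$; each $\nu_j$ is read off from the component $I_v$ of $I(H)$ containing the image of $I_{v_j}$; and $\varphi_j=\varphi_j'$ as set maps), and your integral-curve argument becomes unnecessary. Note that your surjectivity step needs the same correction: setting $\chi_{v_j}:=\varphi_j$ does not literally yield a semiconjugacy, since a point-level integral curve on $F_{v_j}$ need not admit a representative satisfying the semiconjugacy equation on a neighborhood; one should instead take the germ of the genuine integral curve (and an arbitrary smooth germ at instantaneous modes), which maps to the given chain precisely because chains forget germs. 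Finally, your observation about the third bullet is a good catch---an edge witness with a fixed point on an overlap $F_v \cap Z_e$ could indeed ``repair'' a trivial reset, so the ``if'' direction tacitly requires flow/guard disjointness or a convention on witnesses---but your fix imports a determinism hypothesis absent from the statement, so it should be flagged as an imprecision in the theorem rather than silently added.
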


The map $e_H$  will typically fail to be injective. For example, consider the hybrid time trajectory $\tau = ((0,0,0), \text{closed})$, {\em i.e.} a single one-point guard set followed by a one-point flow set.  Let $z$ denote the unique point in $Z_{e_0}^\tau = I_{v_0}^\tau$.   A hybrid semiconjugacy $\chi\colon \tau \to H$ depends on a choice of germ-equivalence class of smooth extensions $\tilde \chi_{v_0}$ of a map $z \mapsto \chi_{v_0}(z)$ to neighborhoods of $z$ in $M_{v_0}^\tau := \mathbb{R}$.  If $\chi_{v_0}(z)$ is in a guard set, there are no other constraints on this choice of extension, so there will generally be many such germ-equivalence classes, each defining a different semiconjugacy $\tau \to H$ with the same corresponding $(0, \infty)$-chain.   

\subsection{Directed systems}  With our notion of generalized trajectory in hand, we can now define the building blocks for sequential composition.

\begin{defn} \label{def:directed}
Let $\init{H}, \fin{H}$ be metric hybrid systems.  A {\bf directed hybrid system} $H \colon \init{H} \leadsto \fin{H}$ is a tuple $(H, \init{\eta}, \fin{\eta})$ consisting of 
\begin{enumerate}
    \item[(1)] a metric hybrid system $H$
    \item[(2)] a hybrid embedding $\init{\eta} \colon \init{H} \to H$ 
    \item[(3)] a hybrid embedding $\fin{\eta}\colon \fin{H} \to H$ such that each component $(\fin{\eta})_v$ is a diffeomorphism, and $G(\fin H)$ is a sink in $G(H)$
\end{enumerate} 
such that for all $\varepsilon, T >0$ and $x \in I(H)$, there exists an $(\varepsilon,T)$-chain from $x$ to some $y \in I(\fin{H})$.  
\end{defn}

We note that if $I(H)$ is compact, then \Cref{def:directed} is independent of the choice of extended metric on $I(H)$. 

As described above, the idea behind a directed system is that trajectories from the domain flow (in a hybrid sense) to the codomain. At first glance, it would seem that we could use executions for this purpose. The following example demonstrates that using executions creates a problem for sequential composition that is resolved by using the more general notion of  $(\epsilon,T)$-chains in place of executions.

\begin{example}
\label{ex:directed}
Let $H$ be the hybrid system given by $G(H) = \overset{v}{\bullet} \xrightarrow{e} \overset{w}{\bullet}$ where 
\begin{itemize}
    \item $M_v = \mathbb{R} = I_v$;
    \item $(M_w, X_w) = (*, 0)$ and $F_v = *$; 
    \item $Z_e = \mathbb{R}$.
\end{itemize}
Let $K$ be the hybrid system given by $G(K) = \overset{y}{\bullet} \xrightarrow{f} \overset{z}{\bullet}$ with
\begin{itemize}
    \item $(M_y, X_y) = (\mathbb{R}, x \frac{d}{dx})$, $I_y = [0,1]$, and $F_y = [0,1)$;
    \item $(M_z, X_z) = (*, 0)$ and $F_y = *$; 
    \item $Z_f = \{1\}$.
\end{itemize}
Then $H$ defines a directed system $H \colon H|_v \leadsto (*,0)$, and $K$ defines a directed system $K \colon (*,0) \leadsto K|_z$ where the embedding $\init\eta^K\colon (*,0) \to K$ maps to the equilibrium point $0 \in I_y$.  Moreover, every infinite execution in $H$ ends up in its final subsystem, and every infinite execution in $K$ with starting point different from $0$ ends up in its final subsystem.  The sequential composition $H$ followed by $K$ (as defined in \Cref{thm:double_cat}) has the graph $\overset{v}\bullet \xrightarrow{e} \overset{y}{\bullet} \xrightarrow{f} \overset{z}{\bullet}$ with the same data as above except the image of $r_e$ becomes the equilibrium point $0 \in I_y$.  Thus, there is no execution from $I_v$ to $I_z$, but there are $(\epsilon,T)$-chains from every point of $I_v$ that end in $I_z$.  
\end{example}

The following proposition and corollary show that the $(\varepsilon,T)$-chain condition of \Cref{def:directed} is strictly weaker than the analogous topological and measure-theoretic conditions.

\begin{prop}
\label{prop:top_directed}
Suppose the triple  $(H, \init\eta, \fin\eta)$ satisfies all conditions of \Cref{def:directed} except for the $(\varepsilon,T)$-chain condition.  Further suppose $H$ is nonblocking and  that a dense subset of $I(D)$ flows into $\fin\eta(\fin{H})$.
Then $H$ is a directed system.
\end{prop}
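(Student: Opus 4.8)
The plan is to prove the $(\varepsilon,T)$-chain condition pointwise: fixing $x \in I(H)$ and $\varepsilon, T > 0$, I will build an explicit $(\varepsilon,T)$-chain from $x$ to a point of $\fin\eta(\fin H)$ by splicing together three pieces---a genuine execution leaving $x$ that runs for time at least $T$, a single $\varepsilon$-sized perturbation onto a nearby point of the dense set, and a genuine execution of that point into the final subsystem. The design principle is that the \emph{only} within-mode jump (the only $u_j \in V(H)$ in \Cref{def:chain}) in the whole chain is that single perturbation, which I place at a time $\ge T$; this makes the spacing condition (iii) hold automatically, while all other transitions are honest resets ($u_j \in E(H)$) that (iii) does not constrain.

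First I would use nonblocking (\Cref{def:nonblocking}) to choose an infinite execution out of $x$ and replace it by its associated fundamental execution $\chi^* \colon \tau^* \to H$ from \Cref{def:execution}; deleting trivial resets merges continuous modes without changing the stop time, so $\chi^*$ is still infinite, i.e. $(\tau^*)^{\stp} = \infty$. Crucially, being fundamental, every transition of $\chi^*$ is a genuine reset. Since $(\tau^*)^{\stp}=\infty$ but there are only countably many jump times, there is a flow interval $[\tau_j,\tau_{j+1})$ with $\tau_{j+1}>\tau_j\ge T$; I pick $t_1 \in (\tau_j, \tau_{j+1})$, so that $p := \chi^*(t_1)$ is a single point lying in the flow set $F_\nu$ of the active mode $\nu$.

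Next, because $I(H)=\bigsqcup_v I_v$ carries the disjoint-union topology, every sufficiently small neighborhood of $p$ lies inside $I_\nu$, so density of the good set yields $q$ in the same mode with $\dist(p,q)\le\varepsilon$ and with $q$ admitting an execution into $\fin\eta(\fin H)$. Taking $u := \nu \in V(H)$ (so $r_u$ is the inclusion $F_\nu \hookrightarrow I_\nu$), the estimate $\dist(q,p)\le\varepsilon$ exhibits a legal $\varepsilon$-jump at time $t_1$. For the last piece I take the fundamental execution of $q$, truncated at the first moment it enters $\fin\eta(\fin H)$---which it cannot leave, since $G(\fin H)$ is a sink---so by \Cref{thm:push_forward_executions} it is a $(0,\infty)$-chain ending at a point of $I(\fin H)$ (with a degenerate initial flow if $q$ happens to lie in a guard set). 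Concatenating the prefix of $\chi^*$ on $[0,t_1]$, the jump $p\mapsto q$, and this terminal segment produces a chain whose unique $V(H)$-transition sits at $t_1$, so (iii) reduces to $t_1 - 0 \ge T$, which holds by the choice of $t_1$; conditions (i) and (ii) are immediate from the construction.

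I expect the main obstacle---and the step I would take care to justify---to be the existence of the interior flow point $p \in F_\nu$ at a time $t_1 \ge T$. This is precisely where the paper's strong reading of ``infinite'' (stop time $=\infty$, distinct from Zeno) does the work: it guarantees that the dynamics issuing from $x$ do not accumulate infinitely many resets in finite time before reaching time $T$, which would otherwise make it impossible to place the first within-mode $\varepsilon$-jump at a time $\ge T$ and could break the argument. The remaining points---that merging trivial resets preserves infiniteness, and that the three pieces assemble into one legitimate hybrid time trajectory carrying the asserted vertex sequence $\nu$ and map sequence $\varphi$---are routine verifications against \Cref{def:htt,def:chain}.
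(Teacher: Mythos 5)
Your proof is correct, but it takes a noticeably different route from the paper's, and the difference is instructive. The paper argues by contradiction and splits into two cases according to whether the infinite execution $\chi \in \infexc_H(x)$ has a jump: if it has none, the perturbation is a within-mode jump ($u_j \in V(H)$) placed at time exactly $T$, just as in your construction; but if it has a jump at some time $t$, the paper takes $y$ to be a \emph{post-reset} point of $\chi(t)$ and absorbs the $\varepsilon$-perturbation into that genuine reset, exploiting the fact that condition (ii) of \Cref{def:chain} permits an $\varepsilon$ discrepancy even when $u_j \in E(H)$, and that such discrepancies are invisible to the spacing condition (iii). Your construction never touches that $\varepsilon$-slack at resets: you pass to the fundamental execution, ride it with exact resets past time $T$ to a non-jump time $t_1$, and place the single within-mode jump there. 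What your approach buys is a uniform, case-free construction; what it costs is the extra observation that an infinite execution is non-Zeno by definition, so its flow intervals cover $[0,\infty)$ up to countably many jump times and an interior flow point after time $T$ always exists (the paper's second case needs no such fact, since one jump suffices whenever it occurs). Your treatment of the same-mode issue is actually more careful than the paper's: you shrink the ball around $p$ until it lies in $I_\nu$ before invoking density, which is legitimate because $I_\nu$ is open in the disjoint-union topology, whereas the paper applies density to $B_\varepsilon(y)$ without remarking that an extended metric may place points of other modes within distance $\varepsilon$, where no jump to them is available.

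One small repair: your claim that there is a flow interval $[\tau_j,\tau_{j+1})$ with $\tau_{j+1}>\tau_j\ge T$ can fail when the fundamental execution has finitely many segments and its last jump occurs before time $T$; then the final segment is $[\tau_{N-1},\infty)$ with $\tau_{N-1}<T$, and no interval has left endpoint $\ge T$. This costs nothing: since $(\tau^*)^{\stp}=\infty$ and there are only countably many jump times, any non-jump time $t_1>T$ lies in the open interior of some (possibly unbounded) flow interval, which is all your argument actually uses.
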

\begin{proof}
Suppose $H$ is not a directed system. Then there exist $\varepsilon, T > 0$ and a point $x \in I(H)$ such that there exists no $(\varepsilon,T)$-chain from $x$ into $\fin\eta(\fin{H})$. Let $\chi \in \infexc_H(x)$.  Let $y \in I(H)$ be defined via one of the following two cases. 
If $\chi$ has no jumps, then let $y = \chi(T)$.  If $\chi$ has a jump at some time $t$, let $y \in I(H)$ be any element of $\chi(t)$ other than the first.  Then in either case there is no execution from $z$ into $\fin\eta(\fin{H})$ for each $z \in B_\varepsilon(y)$, a contradiction.
\end{proof}

Recall that for any manifold $M$, we say that a subset $B \subset M$ has measure zero if its intersection with any smooth chart has measure zero \cite{lee:smooth_manifolds}.   We  use the phrase ``almost all'' in the usual sense to refer to conditions that hold except on a set of measure zero.   Since a set of measure zero cannot contain an open ball, we have the following corollary to \Cref{prop:top_directed}.

\begin{cor}
\label{cor:meas_directed}
Suppose the triple  $(H,  \init{\eta}, \fin{\eta})$ satisfies all conditions of \Cref{def:directed} except for the $(\varepsilon,T)$-chain condition.  Further suppose $H$ is nonblocking and that almost all points of $I(H)$ flow into $\fin\eta(\fin{H})$.
Then $H$ is a directed system.
\end{cor}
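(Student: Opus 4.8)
The plan is to derive this immediately from \Cref{prop:top_directed}, since the two statements share the same triple $(H, \init{\eta}, \fin{\eta})$, the same nonblocking hypothesis, and concern the identical subset of $I(H)$ — call it $G$ — consisting of those points whose forward evolution enters $\fin{\eta}(\fin{H})$. The only difference is the hypothesis placed on $G$: \Cref{prop:top_directed} requires $G$ to be dense, whereas here we are given that its complement $B := I(H) \setminus G$ has measure zero. So the entire task reduces to the purely point-set observation that a full-measure subset of $I(H)$ is dense, after which \Cref{prop:top_directed} finishes the argument verbatim.

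To establish density I would argue by contraposition, using the remark recorded just before the statement. Suppose $G$ were not dense. Then there is a nonempty open set $U \subset I(H)$ disjoint from $G$, i.e. $U \subset B$. Since $I(H) = \bigsqcup_{v} I_v$ is a disjoint union, $U$ meets some component in a nonempty relatively open set $U \cap I_v$, which contains an open ball of the ambient manifold $M_v$. Such a ball has positive Lebesgue measure in any chart, so $B$ could not have measure zero — contradicting the hypothesis that almost all points of $I(H)$ flow into $\fin{\eta}(\fin{H})$. Hence $G$ is dense, and the corollary follows.

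The hard part is not the logic but the bookkeeping around what ``measure zero'' and ``open ball'' mean across the disjoint union $I(H) = \bigsqcup_v I_v$, since the active sets $I_v$ are permitted to be arbitrary subsets of their ambient manifolds $M_v$. The implication actually used is ``measure zero $\Rightarrow$ empty interior'' — equivalently, that a measure-zero subset of a manifold contains no open ball — applied componentwise in each $M_v$; this is the correct statement, whereas the stronger-sounding ``measure zero $\Rightarrow$ nowhere dense'' is false (witness $\mathbb{Q} \subset \mathbb{R}$). The one step requiring genuine care is the claim that a nonempty relatively open subset of $I_v$ contains an open ball of $M_v$: this is precisely where the argument tacitly uses that the relevant active sets are locally full-dimensional, so that relative openness in $I_v$ forces positive volume in $M_v$. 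Once that is granted, the proof is a one-line appeal to \Cref{prop:top_directed}.
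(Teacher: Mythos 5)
Your proof is correct and is essentially the paper's own argument: the paper derives \Cref{cor:meas_directed} from \Cref{prop:top_directed} in a single line, observing that a set of measure zero cannot contain an open ball, hence the set of points flowing into $\fin\eta(\fin{H})$ is dense. The subtlety you flag at the end---that a nonempty relatively open subset of $I_v$ contains a ball of $M_v$ only when the active set is locally full-dimensional---is genuine, but it is left equally implicit in the paper's one-sentence justification.
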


\subsection{A navigation example}
To give an example of a directed system, we recall the following definitions from  \cite{Arslan_Koditschek_2018}. Let $\mathcal{W} \subset \mathbb{R}^2$ be a closed convex set.  Let $\mathcal{O} = \{O_1, \ldots, O_m\}$ be a finite set of nonmoving obstacles $O_i$, where each $O_i \subset \mathcal{W}$ is a convex set with twice differentiable boundary.  

We assume a disk-shaped robot of radius $r$ operating in this environment.  The free space for the robot is the set
\begin{linenomath*}
\[
    \mathcal{F} = \mathcal{W} \setminus \bigcup_i B_r(O_i),
\]
\end{linenomath*}
where $B_r(O_i)$ denotes the union of the open balls of radius $r$ around each point of $O_i$.
We further assume the robot has complete knowledge of the environment within the open ball of radius $R$ of its center.  If an obstacle intersects this perceptual disk, we say that it is visible (to the robot).

Given a global goal location $x^* \in \mathcal{W}$ and a set $A \subset W$,  the projection of the goal onto $A$ is
\begin{linenomath*}
\[
    \Pi_A(x^*) := \argmin_{a \in A}  \dist(a, x^*).
\]
\end{linenomath*}
If the robot's center lies at $x \in \mathcal{W}$, we define $V_x \subset W$ to be the Voronoi cell containing $x$ given by the maximum margin separating hyperplanes between the robot and all visible obstacles.

In \cite{Arslan_Koditschek_2018}, it was shown that the ``move-to-projected-goal law''
\begin{linenomath*}
\[
    u(x) = x - \Pi_{V_x \cap \mathcal{F}}(x^*),
\]
\end{linenomath*}
 leaves the free space $\mathcal{F}$ positively invariant.  Moreover, under a natural well-separated condition and a curvature condition on obstacle boundaries, the goal point $x^*$ is an asymptotically stable equilibrium whose basin includes almost all points of $\mathcal{F}$.

More formally, let $H \in \hempty$ be the following hybrid system.  The vertices in $V(H)$ are subsets $S \subset \{1, \ldots, m\}$. Given vertices $S \neq T$,  we define a reset  $e_{ST} \in E(H)$ from $S$ to $T$ if either $S \subsetneq T$ and $|S| = |T| -1$ or vice versa.

To any continuous mode $S$,  we associate the manifold  $M_S := \mathbb R^2$ and the active set $I_S$ to be the closure of the convex, open set $P_S := \bigcap_{s \in S} B_R(O_s) \cap \mathcal{F}$.  We define the vector field $X_S$ to be the gradient of $u$.  To associate resets and guards to a reset $e_{ST}$, there are two cases: $S \subset T$ or $T \subset S$.  
If $S \subset T$ (so $I_T \subset I_S$), then we let $Z_{ST}$ be the set of points $x \in \partial I_T$ such that for every $\varepsilon > 0$ there exists $0 < t < \varepsilon$ such that $\gamma_x(t) \in P_T$, where $\gamma_x$ is an integral curve starting at $x$. If $T \subset S$, then we define $Z_{ST} := (\partial I_S \cap I_T) \setminus Z_{TS}$. In either case, we define the reset map $r_{ST}\colon Z_{ST} \to I_T$ to be the identity map.

The resulting hybrid system $H$ is nonblocking and deterministic, assuming it has no fundamental Zeno executions.  Since the basin of $x^*$ contains almost all points of $\mathcal F$, it follows from \Cref{cor:meas_directed} that $H$ defines a directed hybrid system from $H \leadsto H|_{S^*}$, where $S^* \subset V(H)$ is the perceptual region containing the goal location $x^*$.

\subsection{A double category of directed systems}
To provide a framework for simultaneous reasoning about sequential composition and hybrid semiconjugacy, we construct a double category of directed hybrid systems, a subclass of hybrid systems amenable to sequential composition. An idea first introduced by Ehresmann \cite{ehresmann1963categories}, a double category $\catname C$ is a category internal to $\catname{Cat}$, the category of (small) categories. More concretely, this means that $\catname C$ is given by collections of objects, vertical morphisms, horizontal morphisms and $2$-cells that fit into squares
\begin{linenomath*}
\[
    \begin{tikzpicture}
        \node (x) at (0,0) {$X$};
        \node (y) at (2,0) {$Y$};
        \node (z) at (0,-2) {$Z$};
        \node (w) at (2, -2) {$W$};
        
        \draw[->, directed arrow, above] (x) to node {$\alpha$} (y);
        \draw[->, left] (x) to node {$f$} (z);
        \draw[->, right] (y) to node {$g$} (w);
        \draw[->, directed arrow, below] (z) to node {$\beta$} (w);
        \draw[-implies, double distance=3pt, left] (1,-.5) to node {$\varphi$} (1,-1.5);
    \end{tikzpicture}
\] 
\end{linenomath*}
which can be associatively composed both vertically and horizontally, along with vertical identities and horizontal units all satisfying standard coherence axioms (see \cite{grandis1999limits, shulman-monoidal_bicategories} for more details). As in \cite{shulman-monoidal_bicategories}, we are primarily interested in what are sometimes called pseudo-double categories, where the vertical composition gives a strict category, but horizontally is only weakly associative and unital. We will drop the ``pseudo'' prefix and refer to these as simply double categories. Axiomatically, a straightforward way to describe the data for a double category is as follows.

\begin{defn}
A double category $\catname C$ is given by a pair of categories $\catname C_0, \catname C_1$ together with unit and source/target functors
\begin{linenomath*}
\begin{gather*}
    U\colon \catname C_0 \to \catname C_1\\
    S,T\colon \catname C_1 \to \catname C_0
\end{gather*}
\end{linenomath*}
and a horizontal composition functor
\begin{linenomath*}
\[
    \odot\colon \catname C_1 \times_{\catname C_0} \catname C_1 \to \catname C_1
\]
\end{linenomath*}
(where the fiber product is over $\catname C_1 \overset{T}{\to} \catname C_0 \overset{S}{\leftarrow} \catname C_1$) satisfying 
\begin{linenomath*}
\begin{gather*}
    S (U_A) = A \\
    T ( U_A) = A 
\end{gather*}
\end{linenomath*} 
for any object $A \in \catname C_0$ (where $U_A$ denotes $U(A)$) and
\begin{linenomath*}
\begin{gather*}
    S(M \odot N) = S N \\
    T(M \odot N) = T M
\end{gather*}
\end{linenomath*}
for any objects $M,N \in \catname C_1$,
and equipped with natural isomorphisms
\begin{linenomath*}
\begin{gather*}
    a\colon (M \odot N) \odot P \to M \odot (N \odot P) \\
    l\colon U_B \odot M \to M \\
    r\colon M \odot U_A \to M
\end{gather*}
\end{linenomath*}
for objects $M, N, P \in \catname{C}_1$.   Furthermore, $S(a), T(a), S(l), T(l), S(r), T(r)$ must be identities, and triangle and pentagon coherence axioms (analogous to the axioms for monoidal categories) must hold for the horizontal composition operator $\odot$  \cite[Sec.~7.1]{grandis1999limits}.

We call the objects of $\catname C_0$ the {\bf objects} of the double category, the morphisms of $\catname C_0$ the {\bf vertical morphisms}, the objects of $\catname C_1$ the {\bf horizontal morphisms}, and the morphisms of $\catname C_1$ the {\bf squares}. As in the diagram above, we will use ``$\to$'' for vertical morphisms, ``$\leadsto$'' for horizontal morphisms, and ``$\implies$'' for the squares. 
\end{defn}

\begin{thm}
\label{thm:double_cat}
Metric hybrid systems (objects);  hybrid semiconjugacies (vertical morphisms); and directed hybrid systems (horizontal morphisms) fit into squares
\begin{linenomath*}
\[
    \begin{tikzpicture}
        \node (hi) at (0,0) {$\init{H}$};
        \node (hf) at (2,0) {$\fin{H}$};
        \node (ki) at (0,-2) {$\init{K}$};
        \node (kf) at (2, -2) {$\fin{K}$,};
        
        \draw[->, directed arrow, above] (hi) to node {$H$} (hf);
        \draw[->, left] (hi) to node {$\init{\alpha}$} (ki);
        \draw[->, right] (hf) to node {$\fin{\alpha}$} (kf);
        \draw[->, below, directed arrow] (ki) to node {$K$} (kf);
        \draw[-implies, double distance=3pt, left] (1,-.5) to node {$\alpha$} (1,-1.5);
    \end{tikzpicture}
\] 
\end{linenomath*}
where $\alpha\colon H \to K$ is a semiconjugacy restricting to $\init{\alpha}$ and $\fin{\alpha}$ on $\init{H}$ and $\fin{H}$ respectively, to form a double category $\dirhs$ of directed hybrid systems.
\end{thm}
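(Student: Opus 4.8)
The plan is to assemble the seven pieces of data required by the definition of a double category and to verify the relations and coherence, with essentially all the real work concentrated in the construction of horizontal composition and the proof that it preserves directedness. Take $\catname C_0$ to be the category of metric hybrid systems and hybrid semiconjugacies; this is a category because semiconjugacies compose (as in $\hs$) and the extended metrics are merely extra data carried along. Take $\catname C_1$ to be the category whose objects are directed hybrid systems $H\colon \init H \leadsto \fin H$ and whose morphisms are the squares, i.e.\ triples $(\init\alpha,\alpha,\fin\alpha)$ with $\alpha\colon H\to K$ a semiconjugacy satisfying $\alpha\circ\init\eta^H=\init\eta^K\circ\init\alpha$ and $\alpha\circ\fin\eta^H=\fin\eta^K\circ\fin\alpha$; composition of squares is composition of the underlying $\alpha$'s, which is associative and unital. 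Define $S,T\colon\catname C_1\to\catname C_0$ by $S(H)=\init H$, $T(H)=\fin H$, $S(\alpha)=\init\alpha$, $T(\alpha)=\fin\alpha$, and define $U\colon\catname C_0\to\catname C_1$ by sending $A$ to the directed system $A\colon A\leadsto A$ with identity embeddings. This $U_A$ is directed: $\id_A$ has diffeomorphism components, $G(A)$ is trivially a sink in itself, and the one-point chain witnesses the $(\varepsilon,T)$-chain condition of \Cref{def:directed} from every $x\in I(A)=I(\fin{U_A})$. The relations $S(U_A)=A=T(U_A)$ then hold on the nose.

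Next I construct horizontal composition. Given composable directed systems $H\colon A\leadsto B$ and $K\colon B\leadsto C$ with common boundary $M:=\fin H=\init K=B$, let $G(K\odot H)$ be the pushout of $G(H)\xleftarrow{\fin\eta^H}G(M)\xrightarrow{\init\eta^K}G(K)$ in $\dgraph$, which exists by \Cref{lem:digraph_complete}. I equip the modes of $H$ lying outside $M$ with their $H$-data and all modes of $K$ (including those in the image of $\init\eta^K$) with their $K$-data; this is the prioritization of the second system on the overlap. The only resets needing modification are those of $H$ whose target lies in $M$: since $\fin\eta^H$ has diffeomorphism components, I rewrite each such reset by postcomposing with $(\fin\eta^H)^{-1}$ followed by $\init\eta^K$, so that it lands in the $K$-copy of $M$; all other resets are inherited. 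The boundary embeddings are inherited as well: $\init\eta^H$ gives the source embedding of $A$, and $\fin\eta^K$ gives the target embedding of $C$, which retains diffeomorphism components, while $G(C)$ remains a sink because the only new edges of the pushout are incoming to the $M$-modes. Thus $S(K\odot H)=\init H=A$ and $T(K\odot H)=\fin K=C$, matching the required source/target relations. On squares, given $\alpha\colon H\to H'$ and $\beta\colon K\to K'$ that agree over $B$ (so $\fin\alpha=\init\beta$), the maps $\alpha$ and $\beta$ restrict compatibly over $M$ and so glue, by the universal property of the pushout, to a unique semiconjugacy $\beta\odot\alpha\colon K\odot H\to K'\odot H'$; functoriality of $\odot$ follows from functoriality of this gluing.

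The crux is that $K\odot H$ is again directed. First equip $I(K\odot H)$ with a compatible extended metric, using $\dist_H$ on the modes inherited from $H$, $\dist_K$ on the modes of $K$, and infinite distance between the two parts; this is compatible with the topology because $I(K\odot H)$ is a topological disjoint union of active sets, so each part is clopen. Now fix $\varepsilon,T>0$ and $x\in I(K\odot H)$. If $x$ lies in the $K$-part, directedness of $K$ already supplies an $(\varepsilon,T)$-chain from $x$ into $I(\fin K)=I(C)$, valid verbatim in $K\odot H$. If $x$ lies in an $H$-mode outside $M$, directedness of $H$ yields an $(\varepsilon,T)$-chain from $x$ to some $y\in I(\fin H)=I(M)$; because $G(M)$ is a sink in $G(H)$, its vertex sequence stays in $H\setminus M$ until it first enters $M$, and that first entry is necessarily via an edge reset (a trivial vertex-reset cannot change modes). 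I truncate the $H$-chain at that entry. In the composite the entry reset is the rewritten one, landing at a point $y'\in I(K)$; I then concatenate with an $(\varepsilon,T)$-chain of $K$ starting at $y'$, taking the post-junction value to be exactly $y'$ so the $\varepsilon$-tolerance of \Cref{def:chain}(ii) at the junction is satisfied with distance $0$ (the infinite cross-distance is never invoked). The result is an $(\varepsilon,T)$-chain from $x$ into $I(C)$; the time-spacing condition \Cref{def:chain}(iii) survives the splice because the junction jump is an edge, hence contributes no point to the relevant subsequence, and the first retained spacing after the junction is $\geq T$ by directedness of the $K$-chain. I expect this truncation-and-concatenation argument, together with the careful bookkeeping confirming all three conditions of \Cref{def:chain} for the spliced chain, to be the main obstacle.

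Finally, weak associativity and unitality are inherited from the universal property of pushouts. The iterated composites $(L\odot K)\odot H$ and $L\odot(K\odot H)$ are both colimits of the same boundary diagram, hence canonically isomorphic; this canonical isomorphism is the associator $a$, and the unitors $l,r$ are the comparison isomorphisms obtained by composing with the unit directed systems of the previous paragraph. Because all three isomorphisms are identities on the inherited boundary data, $S$ and $T$ send $a,l,r$ to identities, and the pentagon and triangle axioms hold automatically, being instances of the uniqueness of comparison maps between colimits. This assembles $(\catname C_0,\catname C_1,S,T,U,\odot,a,l,r)$ into the double category $\dirhs$, completing the proof.
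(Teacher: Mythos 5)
Your proposal follows the same route as the paper's proof: the same vertical category of metric hybrid systems and semiconjugacies, the same unit/source/target functors, horizontal composition via a $\dgraph$-pushout (\Cref{lem:digraph_complete}) that prioritizes the second system on the overlap and rewrites resets through $\beta_w = (\init{\eta})_w \circ (\fin{\eta})_w^{-1}$, directedness of the composite by splicing a chain of the first system with a chain of the second, and coherence obtained from the pushout's universal property together with the fact that the data assignment is determined by prioritization. In two respects you are actually more careful than the paper: you construct a compatible extended metric on the composite state space (the paper never specifies what metric $H' \odot H$ carries, even though \Cref{def:directed} requires one), and your splicing argument checks conditions (ii) and (iii) of \Cref{def:chain} explicitly --- in particular the observation that the junction jump is an edge reset and therefore contributes no term to the subsequence governing the time-spacing condition, whereas the paper simply asserts that $(\varepsilon,T)$-chains compose.

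One corner of your construction is incomplete. Your rewriting rule --- postcompose with $\beta$ exactly those resets of $H$ whose target lies in $M$ --- only repairs the codomain side. The sink condition does not force every edge of $G(H)$ joining vertices of $\fin{\eta}^H(G(M))$ to be the image of an edge of $G(M)$: the graph $G(H)$ may contain additional edges between overlap vertices, and these are not identified with anything in the pushout, so they survive as edges of the composite carrying $H$-data. For such an edge the \emph{source} mode also carries $K$-data in the composite, so its guard $Z^H_e \subset I^H_{\src(e)}$ is ill-typed there; one needs the full conjugation, with guard $\beta_{\src}(Z^H_e)$ (minus the second system's guards, if one wants the prioritization to remain meaningful) and reset $\beta_{\tgt}\circ r^H_e \circ \beta_{\src}^{-1}$, which is what the conjugation formulas in the paper's guard/reset case analysis are designed to supply. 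This omission does not damage your directedness argument, since you truncate the first chain at its first entry into $M$ and never traverse such edges, but it must be patched for $K \odot H$ to be a well-defined hybrid system. A smaller quibble: your appeal to ``uniqueness of comparison maps between colimits'' for the pentagon and triangle axioms is only literally valid at the graph level; the continuous data is not part of the colimit, and (as the paper notes) one must also invoke the fact that iterated prioritization is determined purely by left-to-right order.
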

\begin{proof}
 We define the vertical category $\dirhs_0$ to be the category of metric hybrid systems with semiconjugacies, where we include empty modes and resets as in the definition of $\hempty$ (\Cref{def:hyb_sem}).\footnote{We note that $\dirhs_0$ is in fact equivalent to $\hempty$ since every smooth manifold admits a Riemannian metric (by locally pulling back the Euclidean metric and using the standard partition of unity argument), and hybrid semiconjugacy ignores the metric structure.}  We define the objects of the category $\dirhs_1$ to be directed systems $H \colon \init{H} \leadsto \fin{H}$ and the morphisms to be semiconjugacies  $\alpha \colon H \to K$ which restrict to semiconjugacies $\init{\alpha} \colon \init{H} \to \init{K}$ and $\fin{\alpha} \colon \fin{H} \to \fin{K}$.
The unit functor $U \colon \dirhs_0 \to \dirhs_1$ is defined on objects to be the directed system $U(H) \colon H \leadsto H$ given by $U(H) = (H, \id_H, \id_H)$, and on morphisms by the identity.  The source and target functors $S,T \colon \dirhs_1 \to \dirhs_0$ are defined on an object $H \colon \init{H} \leadsto \fin{H}$ by 
 $S(H) = \init{H}$ and $T(H) = \fin{H}$ and on morphisms by the 
 restrictions $S(\alpha) = \init{\alpha}$ and  $T(\alpha) = \fin{\alpha}$.

Next we define the horizontal composition functor $\odot\colon  \dirhs_1 \times_{\dirhs_0} \dirhs_1 \to \dirhs_1$.  Given directed hybrid systems $H\colon \init{H} \leadsto K$ and $H'\colon K \leadsto \fin{H}'$, 
 let $H' \odot H \in \dirhs_1$ be the following hybrid system, which intuitively corresponds to prioritizing $H'$ on the overlapping system $K$.

The graph $G(H' \odot H)$ is given by the pushout square 
\begin{linenomath*}
    \[
    \begin{tikzpicture}
        \node (x) at (0,0) {$G(K)$};
        \node (y) at (3,0) {$G(H')$};
        \node (z) at (0,-2) {$G(H)$};
        \node (w) at (3, -2) {$G(H' \odot H)$};

        \draw +(2,-1.5) -- +(2,-1)  -- +(2.5,-1);

        \draw[->, above] (x) to node {$\init{\eta}^{H'}$} (y);
        \draw[->, left] (x) to node {$\fin{\eta}^{H}$} (z);
        \draw[->, right] (y) to node {$g$} (w);
        \draw[->, below] (z) to node {$f$} (w);
    \end{tikzpicture}
    \]
    \end{linenomath*}
which exists because of \Cref{lem:digraph_complete}.

For the continuous modes, we define for each $v \in V(H'\odot H)$
\begin{linenomath*}
    \[ 
    I_v^{H' \odot H} = 
    \begin{cases}
         I_{g^{-1}(v)}^{H'} & v \in g(V(H')) \\
         I_{f^{-1}(v)}^{H} & \text{otherwise}
    \end{cases}
    \]
    \end{linenomath*}
    and
    \begin{linenomath*}
    \[ F_v^{H' \odot H} = 
    \begin{cases}
         F_{g^{-1}(v)}^{H'} & v \in g(V( H')) \\
         F_{f^{-1}(v)}^{H} & \text{otherwise} \qquad .
    \end{cases}
    \]
    \end{linenomath*}
    
Now in order to define the guard sets, we construct for each $w \in V(K)$ the composition $\beta_w = (\init\eta)_w \circ (\fin\eta)_w^{-1}$. Then for $e \in E(H' \odot H)$, we define
\begin{linenomath*}
    \[ 
        Z_e^{H' \odot H} = 
    \begin{cases}
         Z_{g^{-1}(e)}^{H'} &  e \in g(E(H')) \\
         \beta_{\src(u)}\left(Z_{f^{-1}(e)}^{H}\right) \setminus Z_{\init{\eta}(\src(u))}^{H'} & e  = f(\fin{\eta}^H(u)) \text{ for } u \in E(K) \setminus \init{\eta}^{-1}(E(H')) \\
         Z_{f^{-1}(e)}^{H} & \text{otherwise}
    \end{cases},
    \]
    \end{linenomath*}
    and 
    \begin{linenomath*}
    \[ 
    r_e^{H} :=
    \begin{cases}
     r_{g(e)} & e \in g(E( H')) \\
     \beta_{\tgt(u)} \circ r_{f(e)} \circ \beta_{\src(u)}^{-1}
     & e  = f(\fin{\eta}^H(u)) \text{ for } u \in E(K) \setminus \init{\eta}^{-1}(E(H')) \\
     r_{f(e)} & \text{otherwise}
    \end{cases}
    \]
    \end{linenomath*}
    
Lastly, we define the initial and final maps by 
\begin{linenomath*}
\[
(\init{\eta}^{H' \odot H})_v := \begin{cases}
\beta_w \circ (\init{\eta}^{H_1})_v  & v = f(\fin\eta(w)) \text {for some } w \in V(K) \\
(\init{\eta}^{H_1})_v & \text{otherwise}
\end{cases}
\]
\end{linenomath*}
and  $\fin{\eta}^{H' \odot H} :=\fin{\eta}^{H'}$.
    
We call $H' \odot H := (H' \odot H, \init{\eta}^H, \fin{\eta}^H)$ the {\bf sequential composition} of $H'$ with $H$.  To check the $(\varepsilon,T)$-chain condition, let $\varepsilon, T > 0$, and $x \in I(H') \odot I(H)$.   Then either $x$ lies in  either $I(H')$ or $I(H)$.  If $x \in I(H')$, then there exists an $(\varepsilon,T)$-chain from $x$ to some $y \in I(Z)$ since $H'$ is directed.  If $x \in I(H)$, then there exists and $(\varepsilon,T)$-chain from $x$ to some $y \in I(K)$ since $H$ is directed.  Since $H'$ is directed, there exists an $(\varepsilon,T)$-chain from $y$ to some $z \in I(\fin H')$.  Because $(\varepsilon,T)$-chains compose, this implies that there exists an $(\varepsilon, T)$-chain from $x$ to $z$.

On 2-cells, the functor $\odot$ is defined by prioritizing its first argument in an analogous way to its definition on objects.  The weak associativity of $\odot$ follows from the facts that (i) constructing pushouts of the underlying graphs is weakly associative and (ii) the data assigned to vertices and edges of a sequential composition $H' \odot H$ is defined by prioritizing $H'$ over $H$, hence the corresponding data of a string of sequential compositions is completely determined by order (prioritizing left over right).   The left and right unitors are the isomorphisms induced by the isomorphisms of the underlying sets.  
\end{proof}

This notion of sequential composition is clearly compatible with our determinism and nonblocking conditions:
\begin{prop}
If $H\colon X \leadsto Y$ and $H'\colon Y \leadsto Z$ are deterministic (nonblocking) directed systems, then $H' \odot H$ is deterministic (nonblocking).
\end{prop}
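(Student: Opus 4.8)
The plan is to verify directly the defining conditions: for determinism, the two clauses of \Cref{def:deterministic}, and for nonblocking, the existence-of-infinite-execution condition of \Cref{def:nonblocking}. The observation that makes this tractable is that in the sequential composite $H' \odot H$ constructed in the proof of \Cref{thm:double_cat}, the continuous data is inherited without alteration: for every vertex $v$ the pair $(F_v, X_v)$ equals either $(F^{H'}_w, X^{H'}_w)$ or $(F^{H}_w, X^{H}_w)$ for the corresponding vertex $w$ of $H'$ or $H$, and only the guard sets and reset maps are rewritten, and only on the overlap copy of $Y$, where $H'$ is prioritized. I would organize the argument by partitioning $V(H' \odot H)$ into the vertices coming strictly from $H$, those coming strictly from $H'$, and the overlap vertices identified with $V(Y)$ by the pushout.

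For determinism, clause (ii) of \Cref{def:deterministic} is immediate: since $(F_v, X_v)$ is inherited verbatim from a deterministic system, the unique-maximal-integral-curve property transfers vertex by vertex. Clause (i), pairwise disjointness of $\{F_v\} \cup \{Z_e : \src(e) = v\}$, is inherited at the strict-$H$ and strict-$H'$ vertices. At an overlap vertex $v$ corresponding to $w \in V(Y)$, the outgoing guards are of two kinds: the genuine $H'$-guards $Z^{H'}_{e'}$, and the rewritten $H$-guards $\beta_w(Z^{H}_{u'}) \setminus Z^{H'}_w$ coming from $Y$-edges $u$ that are trivialized in $H'$. Mutual disjointness of the $H'$-guards and their disjointness from $F_v = F^{H'}_w$ follow from determinism of $H'$; mutual disjointness of the rewritten guards follows from determinism of $H$ together with injectivity of $\beta_w$; and disjointness between the two kinds is built in by the subtraction of $Z^{H'}_w$. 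I expect the one delicate point to be disjointness of the rewritten guards from the flow set $F_v = F^{H'}_w$: since $u$ is trivialized in $H'$, the semiconjugacy condition of \Cref{def:hyb_sem} forces $\init{\eta}^{H'}(Z^Y_u) \subset F^{H'}_w$, so $\beta_w(Z^{H}_{u'})$ can genuinely meet $F^{H'}_w$, and this intersection is not excised by removing $Z^{H'}_w$. This is the main obstacle. I would resolve it by invoking the Remark following \Cref{prop:det}: deleting the flow-set points of any such flow-guard overlap yields a hybrid system with identical active sets and identical maximal executions, which is then deterministic in the strict sense of \Cref{def:deterministic}; equivalently, the composite as literally defined already satisfies the conclusion of \Cref{prop:det} (a unique maximal fundamental execution from each point).

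For nonblocking, given $x \in I(H' \odot H)$ I would produce an infinite execution by cases. If $x$ lies in the $H'$-part, then since the graph embedding $g$ realizes $H'$ as a full sub-hybrid-system of $H' \odot H$ (all $H'$-modes, flow sets, guards, and resets are inherited through the prioritized branches), any infinite execution of $H'$ from $x$, guaranteed by nonblocking of $H'$, is also an infinite execution of $H' \odot H$. If $x$ lies in the strict-$H$ part, take an infinite execution $\chi$ of $H$ from $x$; while $\chi$ remains outside the overlap copy of $Y$ it agrees with the $H' \odot H$ dynamics, and the first time it resets into $Y$ (necessarily through a non-rewritten edge, since $Y$ is a sink of $H$ and such an edge has source outside $Y$) I would splice in an infinite execution of $H'$ starting at the landing point, which exists by nonblocking of $H'$; if $\chi$ never enters $Y$ it already serves as the required execution. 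The two pieces concatenate into a single infinite execution of $H' \odot H$, completing the argument.
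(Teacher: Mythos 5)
The paper offers no proof of this proposition at all (it is asserted with the phrase ``clearly compatible''), so there is nothing to match your argument against; judged on its own, your case-analysis structure is the natural one and your nonblocking half is correct. The determinism half, however, turns on a step that is both unnecessary and unsound, and the two facts are related.

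It is unnecessary because your ``main obstacle'' cannot arise. In \Cref{def:directed} the interface map $\init{\eta}^{H'}$ is required to be a \emph{hybrid embedding}, and a hybrid embedding's graph morphism is monic in $\dgraph$, which the paper characterizes as: $f_V$, $f_E$ injective \emph{and} $\im(f_E) \subset E(G')$. So every edge of the overlap system $Y$ is sent by $\init{\eta}^{H'}$ to an \emph{edge} of $H'$; the index set $E(Y) \setminus (\init{\eta}^{H'})^{-1}(E(H'))$ governing the rewritten guards in \Cref{thm:double_cat} is empty, and no guard of the form $\beta_w\bigl(Z^H_{u}\bigr) \setminus Z^{H'}_{\init{\eta}(w)}$ occurs in $H' \odot H$. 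After the pushout identification (and using the sink condition on $\fin{\eta}^H$), every outgoing edge at an overlap vertex is an $H'$-edge carrying its $H'$-guard, so clause (i) of \Cref{def:deterministic} at overlap vertices is literally clause (i) for $H'$, and your easy disjointness observations, together with the verbatim inheritance giving clause (ii), already complete the proof.

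It is unsound because, were the obstacle real, the appeal to the remark following \Cref{prop:det} would fail rather than rescue you. That remark presupposes a system that already satisfies the conclusion of \Cref{prop:det}; its excision of flow--guard overlap points preserves maximal executions only because, under that hypothesis, no integral curve can continue from such a point for positive time. At a point of $\init{\eta}(Z^Y_u) \subset F^{H'}_{\init{\eta}(w)}$ nothing prevents the $H'$-flow from persisting, and then $H' \odot H$ would admit two distinct maximal fundamental executions from that point---flow, or jump through the rewritten reset---so the conclusion of \Cref{prop:det} would itself fail, and excising such points from $F^{H'}_{\init{\eta}(w)}$ would change the execution set (it would also break the verbatim inheritance of $H'$-data on which your clause (ii) argument and your splicing argument for nonblocking both rely). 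In other words, if trivialized interface edges were permitted, the proposition would be \emph{false}, not merely delicate; the proof has to go through the vacuity observation above, and no appeal to that remark can substitute for it.
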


We also have the following adaptation of \Cref{prop:cartesian} to the directed system setting.

\begin{prop}
\label{prop:double_cartesian}
The vertical category $\dirhs_1$ is cartesian and cocartesian.  That is, if $H_i\colon \init{(H_i)} \leadsto \fin{(H_i)}$ for $i = 1,2$ are directed systems,
then $H_1 \times H_2$ and $H_1 \sqcup H_2$ are also directed systems.  Moreover, these systems define horizontal morphisms 
$$H_1 \times H_2 \colon \init{(H_1)} \times \init{(H_1)} \leadsto \fin{(H_1)} \times \fin{(H_1)}$$ and  
$$H_1 \sqcup H_2 \colon \init{(H_1)} \sqcup \init{(H_1)} \leadsto \fin{(H_1)} \sqcup \fin{(H_1)}.$$
\end{prop}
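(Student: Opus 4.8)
The plan is to construct both the product and the coproduct on top of the corresponding $\hempty$-constructions furnished by \Cref{prop:cartesian}, to equip each with a compatible extended metric, and then to check the three clauses of \Cref{def:directed} together with the relevant universal property. Throughout I would exploit that the directed structure is layered on an underlying hybrid system, so the graph-theoretic and smooth data can be inherited coordinatewise from \Cref{prop:cartesian}, leaving only the $(\varepsilon,T)$-chain clause to require genuine work.

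For the product I would let $H_1 \times H_2$ be the $\hempty$-product of the underlying metric hybrid systems, endow $I(H_1 \times H_2) = I(H_1) \times I(H_2)$ with the max metric $d((x_1,x_2),(x_1',x_2')) = \max(d_1(x_1,x_1'), d_2(x_2,x_2'))$ (an extended metric compatible with the product topology), and take $\init{\eta}^{H_1 \times H_2} := \init{\eta}^{H_1} \times \init{\eta}^{H_2}$ and $\fin{\eta}^{H_1 \times H_2} := \fin{\eta}^{H_1} \times \fin{\eta}^{H_2}$. Clauses (2) and (3) then hold coordinatewise: a product of hybrid embeddings is a hybrid embedding, a product of component diffeomorphisms is a diffeomorphism, and $G(\fin{(H_1)}) \boxtimes G(\fin{(H_2)})$ is a sink in $G(H_1) \boxtimes G(H_2)$ because any generalized edge $(p_1,p_2)$ with source in $V(\fin{(H_1)}) \times V(\fin{(H_2)})$ has each $p_i$ issuing from the sink $V(\fin{(H_i)})$, forcing its target back into $V(\fin{(H_i)})$.

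The substance is the $(\varepsilon,T)$-chain clause. Given $\varepsilon, T > 0$ and $(x_1,x_2)$, directedness of the factors supplies $(\varepsilon,T)$-chains $\chi_i$ from $x_i$ into $I(\fin{(H_i)})$, and I would assemble a single product chain by flowing both coordinates simultaneously under $X_{v_1} \times X_{v_2}$, realizing a genuine reset of one factor by a product reset over an edge-vertex pair (a trivial reset in the untouched coordinate) and simultaneous resets by an edge-edge pair, exactly as in \Cref{ex:time_trajectory_products}. The Conley perturbations combine harmlessly under the max metric, since a $\le \varepsilon$ perturbation of one coordinate (leaving the other fixed, or two such perturbations performed together) is a $\le \varepsilon$ perturbation of the product point. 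I expect the main obstacle to be the spacing clause (iii): the union of the two factors' perturbation times need not be $T$-separated even though each factor's is. To repair this I would schedule all product perturbations on a common time grid of mesh at least $T$, performing in a single product perturbation whatever coordinatewise perturbations have accrued since the previous grid point; the delicate step is to justify that each factor's perturbation may be deferred to the next grid time by flowing (and taking genuine resets) in the interim, which I would arrange by choosing each factor chain so that its perturbations already land on such a grid, inserting waiting intervals at the ends of flow segments where the flow permits. Granting the chain clause, the universal property is immediate: the $\hempty$-projections $\pi_i$ restrict to the projections $\init{(H_1)} \times \init{(H_2)} \to \init{(H_i)}$ and $\fin{(H_1)} \times \fin{(H_2)} \to \fin{(H_i)}$, hence are morphisms of $\dirhs_1$, and the mediating $\hempty$-morphism out of any competing directed system restricts appropriately and so is a $\dirhs_1$-morphism.

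Finally, the coproduct is straightforward and I would treat it quickly. I would take $H_1 \sqcup H_2$ to be the $\hempty$-coproduct with the extended metric equal to $d_i$ within each summand and $\infty$ across summands (compatible with the disjoint-union topology), with $\init{\eta}$ and $\fin{\eta}$ the induced inclusions. Every point of $I(H_1 \sqcup H_2)$ lies in exactly one summand, and for finite $\varepsilon$ no chain crosses between summands, so an $(\varepsilon,T)$-chain into $I(\fin{(H_i)})$ from that summand is already an $(\varepsilon,T)$-chain into $I(\fin{(H_1 \sqcup H_2)}) = I(\fin{(H_1)}) \sqcup I(\fin{(H_2)})$. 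The embedding, sink, and diffeomorphism clauses, as well as the universal property of the coproduct, then follow summandwise from \Cref{prop:cartesian}.
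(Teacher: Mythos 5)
Your overall architecture (inherit the underlying product and coproduct from \Cref{prop:cartesian}, equip them with a product/disjoint-union metric, reduce everything to the $(\varepsilon,T)$-chain clause of \Cref{def:directed}) matches the paper, and your treatment of the embedding and sink clauses, the universal property, and the coproduct is sound. The gap is in your repair of the spacing clause (iii) of \Cref{def:chain}, which you correctly single out as the crux. Your fix needs two things the hypotheses do not supply. First, you ask for factor chains whose perturbation times land on a prescribed common grid of mesh at least $T$; but directedness only asserts the \emph{existence} of some $(\varepsilon,T)$-chain into the final subsystem, with no control over where its pure-perturbation transitions (those with $u_j \in V(H)$) occur. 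Second, you propose to ``defer'' a perturbation by flowing in the interim; but a Conley chain needs a perturbation precisely at points where the dynamics cannot continue (the integral curve exits the flow set, or no flow or reset is available at all), so there is in general no flow segment available to absorb the delay --- and even when there is, the deferred perturbation is applied to a different point on a different trajectory, which need not reach $\fin{(H_i)}$ anymore. The interleaving problem is real and cannot be wished away: if the first factor perturbs at times $0, 2T, 4T, \ldots$ and the second at $\delta, 2T+\delta, \ldots$, both factors satisfy the spacing condition, yet the union of perturbation times has gaps of size $\delta$, arbitrarily small. So any construction that runs two genuinely perturbed chains \emph{simultaneously} is doomed without a synchronization mechanism that the definitions do not provide.

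The paper's proof avoids synchronization entirely by running the two perturbed chains \emph{sequentially}. In stage one it pairs a genuine chain $\chi_1 \in \conleyet_{H_1}(x_1, y_1)$ with a $(0,\infty)$-chain (a perturbation-free fundamental execution) of the same total duration in the second coordinate; the product chain's pure-perturbation transitions are then exactly those of $\chi_1$, so the $T$-spacing is inherited verbatim. In stage two, starting from $(y_1, y_2)$ with $y_1 \in \fin{(H_1)}$, it pairs a $(0,\infty)$-chain in the first coordinate with a genuine chain $\psi_2 \in \conleyet_{H_2}(y_2, z_2)$; the sink condition in \Cref{def:directed} (together with the component diffeomorphism requirement on $\fin{\eta}$) guarantees the first coordinate never leaves $\fin{(H_1)}$ during this stage, so the endpoint $(z_1,z_2)$ lies in $\fin{(H_1)} \times \fin{(H_2)}$. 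Concatenating the two stages gives the required $(\varepsilon,T)$-chain. Note that this alternating scheme is exactly where the sink hypothesis earns its keep; your simultaneous scheme never uses it for the chain clause, which is a sign that something is missing. If you restructure your chain construction along these alternating lines, the rest of your argument goes through.
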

\begin{proof}
Let $H_i\colon \init{(H_i)} \leadsto \fin{(H_i)}$ for $i = 1,2$ be directed systems.  Let $H_1 \times H_2 := ({H_1} \times {H_2}, \init{\eta}^{H_1} \times \init{\eta}^{H_2}, \fin{\eta}^{H_1} \times \fin{\eta}^{H_2})$.  

To verify the $(\varepsilon, T)$-chain condition of \Cref{def:directed} for $H_1 \times H_2$,  let $x = (x_1, x_2) \in I(H_1 \times H_2)$,  and $\varepsilon, T > 0$.   Let $\chi_1 \in \conleyet_{H_1}(x_1, y_1)$ for some $y_1 \in \fin{(H_1)}$.  Let $\chi_2 \in \conley^{0,\infty}_{H_2}(x_2, y_2)$ be a chain with the same total time length as $\chi_1$ .  Then we have $\chi := \chi_1 \times \chi_2 \in \conleyet(x,y)$ where $y = (y_1, y_2)$.  Similarly, there exists a chain $\psi_2 \in \conleyet_{H_2}(y_2,z_2)$ for some $z_2 \in I(\fin{(H_2)})$, and a chain $\psi_1 \in \conley^{0,\infty}(y_1, z_1)$ of same total time length as $\chi_1$.  Then we have $\psi := \psi_1 \times \psi_2 \in \conleyet_{H_1 \times H_2}(y,z)$ where $z = (z_1, z_2)$.  Since the graph $G(\fin{(H_1)}$ is a sink in $G(H_1)$, we have $z \in \fin{(H_1 \times H_2)}$.  The concatenation of $\chi$ with $\psi$ defines an $(\varepsilon,T)$-chain from $x$ to $z$.

We can then define the $2$-cell projections $\proj_i\colon H_1\times H_2 \to H_i$ by setting $\proj_i(v_1, v_2) = v_i$ for all $(v_1, v_2) \in V(\graph G(H_1 \times H_2))$ and $\proj_i(p_1, p_2) = p_i$ for all $(p_1, p_2) \in E(\graph G(H_1 \times H_2))$. Additionally, for each $v = (v_1, v_2) \in V(\graph G(H_1 \times H_2)$, we can assign the projection $\pi_i\colon I_{v_1} \times I_{v_2} \to I_{v_i}$ which is a semiconjugacy. 

Now suppose that $K$ is another system with $2$-cells
\begin{linenomath*}
\[
    \begin{tikzpicture}
        \node (x) at (0,0) {$\init{K}$};
        \node (y) at (3,0) {$\fin{K}$};
        \node (z) at (0,-2) {$\init{(H_1)}$};
        \node (w) at (3, -2) {$\fin{(H_1)}$};
        
        \draw[->, above] (x) to node {$K$} (y);
        \draw[->, left] (x) to node {$s_\alpha^1$} (z);
        \draw[->, right] (y) to node {$t_\alpha^1$} (w);
        \draw[->, below] (z) to node {$H_1$} (w);
        \draw[-implies, double distance=3pt, left] (1.5,-.5) to node {$\alpha_1$} (1.5,-1.5);
    \end{tikzpicture}
\] 
\end{linenomath*}
and 
\begin{linenomath*}
\[
    \begin{tikzpicture}
        \node (x) at (0,0) {$\init{K}$};
        \node (y) at (3,0) {$\fin{K}$};
        \node (z) at (0,-2) {$\init{(H_2)}$};
        \node (w) at (3, -2) {$\fin{(H_2)}$};
        
        \draw[->, above] (x) to node {$K$} (y);
        \draw[->, left] (x) to node {$s_\alpha^2$} (z);
        \draw[->, right] (y) to node {$t_\alpha^2$} (w);
        \draw[->, below] (z) to node {$H_2$} (w);
        \draw[-implies, double distance=3pt, left] (1.5,-.5) to node {$\alpha_2$} (1.5,-1.5);
    \end{tikzpicture}
\] 
\end{linenomath*}

Then we can define a $2$-cell 
\begin{linenomath*}
\[
    \begin{tikzpicture}
        \node (x) at (0,0) {$\init{K}$};
        \node (y) at (6,0) {$\fin{K}$};
        \node (z) at (0,-2) {$\init{(H_1)} \times \init{(H_2)}$};
        \node (w) at (6, -2) {$\fin{(H_1)} \times \fin{(H_2)}$};
        
        \draw[->, above] (x) to node {$K$} (y);
        \draw[->, left] (x) to node {$s_\beta$} (z);
        \draw[->, right] (y) to node {$t_\beta$} (w);
        \draw[->, below] (z) to node {$H_1 \times H_2$} (w);
        \draw[-implies, double distance=3pt, left] (3,-.5) to node {$\beta$} (3,-1.5);
    \end{tikzpicture}
\] 
\end{linenomath*}
by setting $\beta(v) = (\alpha_1(v), \alpha_2(v))$ for each $v \in V(\graph G(K))$, and $\beta(e) = (\alpha_1(e), \alpha_2(e))$ for each $e \in E(\graph G(K))$. Then $\beta_v = {\alpha_1}_v\times {\alpha_2}_v$ is a semiconjugacy for each $v \in V(\graph G(K))$ and we get squares of the form
\begin{linenomath*}
\[
    \begin{tikzpicture}
        \node (x) at (0,0) {$Z^K_{\src(e)}$};
        \node (y) at (3,0) {$I^K_{\tgt(e)}$};
        \node (z) at (0,-2) {$Z^{H_1 \times H_2}_{\src(\beta(e))}$};
        \node (w) at (3, -2) {$I^{H_1 \times H_2}_{\tgt(\beta(e))}$};
        
        \draw[->, above] (x) to node {$r_e$} (y);
        \draw[->, left] (x) to node {$\beta_{\src(e)}|_{G^K_{\src(e)}}$} (z);
        \draw[->, right] (y) to node {$\beta_{\tgt(e)}$} (w);
        \draw[->, below] (z) to node {$r_{\beta(e)}$} (w);
    \end{tikzpicture},
\]
\end{linenomath*}
which commute because the component squares commute. Since the projections are defined by the usual projections of maps, $\beta$ is the unique $2$-cell such that $\proj_i \circ \beta = \alpha_i$. 

The cocartesian condition follows easily from \Cref{prop:cartesian}.
\end{proof}

\section{Further Directions}
While this paper lays the groundwork for a theory of formal composition of hybrid systems, the development of this theory is wide open.  One important direction is the unification of our constructions with the theory of parallel composition via networks of open hybrid systems \cite{lerman-schmitt:open_hybrid_systems}.  Such a theory will be essential to synthesizing nontrivial parallel compositions of hybrid systems.  An interoperability result allowing us to construct sequential and hierarchical compositions of networks of hybrid systems via the corresponding compositions of their constituents would be particularly interesting.

Another interesting direction is the construction of a more abstract theory of template-anchor pairs. For example, there should be a functor from our (double) category of hybrid systems to a category in which, for example, the smooth system $(\mathbb{S}^1, \frac{d}{d\theta})$ is a template for all hybrid limit cycles.  One  potential approach is to localize at a class of morphisms containing the subdivisions.     Informally, this category would ``quotient out'' by (a subclass of) bisimulative pairs of hybrid systems.  A step in this direction would be to define bisimulation in $\hs$, perhaps using methods similar to \cite{htp:bisimulations, joyal1996bisimulation}.  In a related vein, we would like to study the compatibility of our constructions with hybrid structural stability \cite{simic2001structural}.  

We are also interested in studying the categorical properties of various operations on hybrid systems.  For example, there should be an ``integration'' operation turning a continuous mode into a reset map ({\em e.g.}, turning a flight mode into a reset since the dynamics are dominated by gravity). In particular, we should be able to replace a hybrid system with the discrete system given by the induced map on a Poincar\'e section.  There should also be a ``reset composition'' operation identifying sequential simultaneous resets with a single reset given by their composition. Lastly, we should have a ``gluing'' operation corresponding to quotienting out by a reset map, analogous to a single step in constructing a hybrifold \cite{simic}. 

We would also like to extend our double-categorical framework to encompass the compatibility of template-anchor pairs with sequential composition.   Since directed systems are cospans, the two notions might be expected to fit into an intercategory analogous to the spans of cospans intercategory \cite{grandis2017intercategories, katis1997span,katis2000formalization, albasini2009cospans}. 

On the applications front, we want to explore the type theory associated with our category \cite{barr-wells} and explore the relationship  of this framework to existing work on hybrid system synthesis from specifications in linear temporal logic (LTL) \cite{kress2009temporal}.  Ideally, our ``language"  would provide  physically-grounded symbols and new connectives  for the atomic propositions in an LTL specification.  A step in this direction is to synthesize complete Lyapunov functions \cite{conley1978isolated} for composite hybrid systems using their components. Intuitively, Lyapunov functions generalize the crucial role that energy landscapes must play in the robot ``programming'' that motivate this work  \cite{Koditschek_1989}.  We envision the specification of a control policy on top of the composite system using the local minima of this Lyapunov function as symbols.

\section*{Acknowledgements}
This work was supported by ONR  N000141612817, a Vannevar Bush Faculty Fellowship  held by Koditschek and by UATL 10601110D8Z, a LUCI Fellowship held by Culbertson, both granted by
the Basic Research Office of the US Undersecretary of Defense for Research and Engineering. The authors also gratefully acknowledge helpful conversations with Yuliy Barishnikov, Dan Guralnik, Brendan Fong, Sanjeevi Krishnan, Matthew Kvalheim and Eugene Lerman.

\bibliographystyle{amsalpha}
\bibliography{IEEEabrv,hybrid_sys,kod}

\newcommand{\etalchar}[1]{$^{#1}$}
\providecommand{\bysame}{\leavevmode\hbox to3em{\hrulefill}\thinspace}
\providecommand{\MR}{\relax\ifhmode\unskip\space\fi MR }
\providecommand{\MRhref}[2]{%
  \href{http://www.ams.org/mathscinet-getitem?mr=#1}{#2}
}
\providecommand{\href}[2]{#2}
\begin{thebibliography}{YBD{\etalchar{+}}18}

\bibitem[AK18]{Arslan_Koditschek_2018}
Omur Arslan and Daniel~E Koditschek, \emph{Sensor-based reactive navigation in
  unknown convex sphere worlds}, The International Journal of Robotics Research
  (2018), 0278364918796267.

\bibitem[ASW09]{albasini2009cospans}
Luisa Albasini, Nicoletta Sabadini, and Robert~FC Walters, \emph{Cospans and
  spans of graphs: a categorical algebra for the sequential and parallel
  composition of discrete systems}, arXiv:0909.4136, 2009.

\bibitem[Ber67]{Bernstein_1967}
N.A. Bernstein, \emph{The coordination and regulation of movements}, Pergamon
  Press, 1967.

\bibitem[BRK99]{Burridge_Rizzi_Koditschek_1999}
R.~R. Burridge, A.~A. Rizzi, and D.~E. Koditschek, \emph{Sequential composition
  of dynamically dexterous robot behaviors}, The International Journal of
  Robotics Research \textbf{18} (1999), no.~6, 534–555.

\bibitem[BW90]{barr-wells}
Michael Barr and Charles Wells, \emph{Category theory for computing science},
  vol.~49, Prentice Hall New York, 1990.

\bibitem[Con78]{conley1978isolated}
Charles~C Conley, \emph{Isolated invariant sets and the {M}orse index}, no.~38,
  American Mathematical Soc., 1978.

\bibitem[DK15]{De_Koditschek_2015}
A.~De and D.E. Koditschek, \emph{Parallel composition of templates for
  tail-energized planar hopping}, 2015 IEEE International Conference on
  Robotics and Automation (ICRA), May 2015, p.~4562–4569.

\bibitem[DK18]{De_Koditschek_2018}
Avik De and Daniel~E. Koditschek, \emph{Vertical hopper compositions for
  preflexive and feedback-stabilized quadrupedal bounding, pacing, pronking,
  and trotting}, The International Journal of Robotics Research \textbf{37}
  (2018), no.~7, 743–778.

\bibitem[Ehr63]{ehresmann1963categories}
Charles Ehresmann, \emph{Cat{\'e}gories structur{\'e}es}, Annales scientifiques
  de l'{\'E}cole Normale Sup{\'e}rieure, vol.~80, 1963, pp.~349--426.

\bibitem[EKR18]{eldering2018global}
Jaap Eldering, Matthew Kvalheim, and Shai Revzen, \emph{Global linearization
  and fiber bundle structure of invariant manifolds}, Nonlinearity \textbf{31}
  (2018), no.~9, 4202--4245.

\bibitem[FK99]{Full_Koditschek_1999}
Robert Full and Daniel Koditschek, \emph{Templates and anchors: Neuromechanical
  hypotheses of legged locomotion on land}, J. of Experimental Biology
  \textbf{202} (1999), no.~23, 3325–3332.

\bibitem[FN71]{Fikes_Nilsson_1971}
Richard~E. Fikes and Nils~J. Nilsson, \emph{Strips: A new approach to the
  application of theorem proving to problem solving}, Artificial intelligence
  \textbf{2} (1971), no.~3–4, 189–208.

\bibitem[GP99]{grandis1999limits}
Marco Grandis and Robert Par{\'e}, \emph{Limits in double categories}, Cahiers
  de topologie et g{\'e}om{\'e}trie diff{\'e}rentielle cat{\'e}goriques
  \textbf{40} (1999), no.~3, 162--220.

\bibitem[GP17]{grandis2017intercategories}
\bysame, \emph{Intercategories: a framework for three-dimensional category
  theory}, Journal of Pure and Applied Algebra \textbf{221} (2017), no.~5,
  999--1054.

\bibitem[GST09]{gst:hybrid_systems}
Rafal Goebel, Ricardo~G Sanfelice, and Andrew~R Teel, \emph{Hybrid dynamical
  systems}, IEEE control systems magazine \textbf{29} (2009), no.~2, 28--93.

\bibitem[HCNP02]{hudak2002arrows}
Paul Hudak, Antony Courtney, Henrik Nilsson, and John Peterson, \emph{Arrows,
  robots, and functional reactive programming}, International School on
  Advanced Functional Programming, Springer, 2002, pp.~159--187.

\bibitem[HFKG06]{Holmes_Full_Koditschek_Guckenheimer_2006}
P.~Holmes, R.~J. Full, D.~E. Koditschek, and J.~Guckenheimer, \emph{The
  dynamics of legged locomotion: Models, analyses, and challenges}, SIAM Review
  \textbf{48} (2006), no.~2, 207–304.

\bibitem[HTP05]{htp:bisimulations}
Esfandiar Haghverdi, Paulo Tabuada, and George~J. Pappas, \emph{Bisimulation
  relations for dynamical, control, and hybrid systems}, Theoretical Computer
  Science \textbf{342} (2005), no.~2--3, 229--261.

\bibitem[Hur82]{hurley1982attractors}
Mike Hurley, \emph{Attractors: persistence, and density of their basins},
  Transactions of the American Mathematical Society \textbf{269} (1982), no.~1,
  247--271.

\bibitem[JBK16]{Johnson_Burden_Koditschek_2016}
Aaron~M. Johnson, Samuel~A. Burden, and Daniel~E. Koditschek, \emph{A hybrid
  systems model for simple manipulation and self-manipulation systems}, The
  International Journal of Robotics Research \textbf{35} (2016), no.~11,
  1354--1392.

\bibitem[JNW96]{joyal1996bisimulation}
Andr{\'e} Joyal, Mogens Nielsen, and Glynn Winskel, \emph{Bisimulation from
  open maps}, Information and Computation \textbf{127} (1996), no.~2, 164--185.

\bibitem[KGFP09]{kress2009temporal}
Hadas Kress-Gazit, Georgios~E Fainekos, and George~J Pappas,
  \emph{Temporal-logic-based reactive mission and motion planning}, IEEE
  transactions on robotics \textbf{25} (2009), no.~6, 1370--1381.

\bibitem[Koc06]{kock2006synthetic}
Anders Kock, \emph{Synthetic differential geometry}, vol. 333, Cambridge
  University Press, 2006.

\bibitem[Kod89]{Koditschek_1989}
D.~E. Koditschek, \emph{The application of total energy as a {L}yapunov
  function for mechanical control systems}, Dynamics and Control of Multibody
  Systems, vol.~97, AMS, 1989, pp.~131--157.

\bibitem[KR16]{Kvalheim_Revzen_2016}
Matthew Kvalheim and Shai Revzen, \emph{Reverse-engineering invariant manifolds
  with asymptotic phase}, arXiv:1608.08442, Aug 2016.

\bibitem[KSW97]{katis1997span}
Piergiulio Katis, Nicoletta Sabadini, and Robert~FC Walters, \emph{Span
  (graph): A categorical algebra of transition systems}, International
  Conference on Algebraic Methodology and Software Technology, Springer, 1997,
  pp.~307--321.

\bibitem[KSW00]{katis2000formalization}
\bysame, \emph{A formalization of the iwim model}, International Conference on
  Coordination Languages and Models, Springer, 2000, pp.~267--283.

\bibitem[LaS68]{lasalle1968stability}
Joseph~P LaSalle, \emph{Stability theory for ordinary differential equations},
  Journal of Differential Equations \textbf{4} (1968), no.~1, 57--65.

\bibitem[LaV06]{LaValle_2006}
Steven~Michael LaValle, \emph{Planning algorithms}, Cambridge university press,
  2006.

\bibitem[Lee90]{Leeuwen_1990}
Jan~V. Leeuwen, \emph{Handbook of theoretical computer science: Formal models
  and semantics}, MIT Press, 1990.

\bibitem[Lee03]{lee:smooth_manifolds}
John~M. Lee, \emph{Introduction to {S}mooth {M}anifolds}, Graduate Texts in
  Mathematics, no. 218, Springer, 2003.

\bibitem[Ler16]{lerman}
Eugene Lerman, \emph{A category of hybrid systems}, arXiv:1612.01950v1, 2016.

\bibitem[LJS{\etalchar{+}}03]{lygeros2003dynamical}
John Lygeros, Karl~Henrik Johansson, Slobodan~N Simic, Jun Zhang, and Shankar~S
  Sastry, \emph{Dynamical properties of hybrid automata}, IEEE Transactions on
  automatic control \textbf{48} (2003), no.~1, 2--17.

\bibitem[LJSE99]{lygeros:executions}
John Lygeros, Karl~Henrik Johansson, Shankar Sastry, and Magnus Egerstedt,
  \emph{On the existence of executions of hybrid automata}, Proceedings of the
  38th IEEE Conference on Decision and Control (Cat. No. 99CH36304), vol.~3,
  IEEE, 1999, pp.~2249--2254.

\bibitem[LPMT84]{Lozano-Perez_Mason_Taylor_1984}
T.~Lozano-Perez, M.~T. Mason, and R.~H. Taylor, \emph{Automatic synthesis of
  fine-motion strategies for robots}, The International Journal of Robotics
  Research \textbf{3} (1984), no.~1, 3–24.

\bibitem[LR19]{loregian-riehl:fibrations}
Fosco Loregian and Emily Riehl, \emph{Categorical notions of fibration},
  Expositiones Mathematicae (2019).

\bibitem[LS19]{lerman-schmitt:open_hybrid_systems}
Eugene Lerman and James Schmidt, \emph{Networks of hybrid open systems},
  arXiv:1908.10447v1, 2019.

\bibitem[MC80]{Mead_Conway_1980}
C.~Mead and L.~Conway, \emph{Introduction to {VLSI} systems}, Addison-Wesley
  Reading, MA, 1980.

\bibitem[Mea89]{Mead_1989}
C.~Mead, \emph{Analog and neural systems}, Addison-Wesley Boston, 1989.

\bibitem[Mil85]{milnor1985concept}
John Milnor, \emph{On the concept of attractor: Correction and remarks},
  Communications in Mathematical Physics \textbf{102} (1985), no.~3, 517--519.

\bibitem[Mil06]{Milnor:2006}
J.~W. Milnor, \emph{{A}ttractor}, Scholarpedia \textbf{1} (2006), no.~11, 1815,
  revision \#186525.

\bibitem[MR13]{moerdijk2013models}
Ieke Moerdijk and Gonzalo~E Reyes, \emph{Models for smooth infinitesimal
  analysis}, Springer Science \& Business Media, 2013.

\bibitem[NBA{\etalchar{+}}07]{Nishikawa_Biewener_Aerts_Ahn_Chiel_Daley_Daniel_Full_Hale_Hedrick_et_al_2007}
Kiisa Nishikawa, Andrew~A. Biewener, Peter Aerts, Anna~N. Ahn, Hillel~J. Chiel,
  Monica~A. Daley, Thomas~L. Daniel, Robert~J. Full, Melina~E. Hale, Tyson~L.
  Hedrick, and et~al., \emph{Neuromechanics: an integrative approach for
  understanding motor control}, Integrative and Comparative Biology \textbf{47}
  (2007), no.~1, 16--54.

\bibitem[Rai86]{Raibert_1986}
M.~H. Raibert, \emph{Legged robots that balance}, Cambridge: MIT Press, 1986.

\bibitem[Rie17]{riehl:ctc}
Emily Riehl, \emph{Category theory in context}, Courier Dover Publications,
  2017.

\bibitem[RK94]{Rizzi_Koditschek_1994}
Alfred~A Rizzi and Daniel~E Koditschek, \emph{Further progress in robot
  juggling: Solvable mirror laws}, proceedings of the 1994 IEEE international
  conference on Robotics and Automation, IEEE, 1994, pp.~2935--2940.

\bibitem[Shu10]{shulman-monoidal_bicategories}
Michael~A Shulman, \emph{Constructing symmetric monoidal bicategories},
  arXiv:1004.0993, 2010.

\bibitem[SJLS01]{simic2001structural}
Slobodan~N Simic, Karl~H Johansson, John Lygeros, and Shankar Sastry,
  \emph{Structural stability of hybrid systems}, 2001 European Control
  Conference (ECC), IEEE, 2001, pp.~3858--3863.

\bibitem[SJSL00]{simic}
Slobodan~N. Simi\'{c}, Karl~Henrik Johansson, Shankar Sastry, and John Lygeros,
  \emph{Towards a geometric theory of hybrid systems}, International Workshop
  on Hybrid Systems: Computation and Control, Springer, 2000, pp.~421--436.

\bibitem[SSK98]{Saranli_Schwind_Koditschek_1998}
U.~Saranli, W.~J. Schwind, and D.~E. Koditschek, \emph{Toward the control of a
  multi-jointed, monoped runner}, Robotics and Automation, 1998. Proceedings.
  1998 IEEE International Conference on, vol.~3, IEEE, 1998, p.~2676–2682.

\bibitem[Str94]{Strogatz_1994}
Steven Strogatz, \emph{Nonlinear dynamics and chaos: with applications to
  physics, biology, chemistry and engineering}, Perseus, 1994.

\bibitem[Whi96]{Whitney_1996}
D.~E. Whitney, \emph{Why mechanical design cannot be like {VLSI} design},
  Research in Engineering Design \textbf{8} (1996), no.~3, 125–138.

\bibitem[WHM12]{werning2012oxford}
Markus Werning, Wolfram Hinzen, and Edouard Machery, \emph{The oxford handbook
  of compositionality}, Oxford Handbooks in Linguistic, 2012.

\bibitem[YBD{\etalchar{+}}18]{Yang_Bellingham_Dupont_Fischer_Floridi_Full_Jacobstein_Kumar_McNutt_Merrifield_et_al_2018}
Guang-Zhong Yang, Jim Bellingham, Pierre~E. Dupont, Peer Fischer, Luciano
  Floridi, Robert Full, Neil Jacobstein, Vijay Kumar, Marcia McNutt, Robert
  Merrifield, and et~al., \emph{The grand challenges of science robotics},
  Science Robotics \textbf{3} (2018), no.~14, eaar7650.

\end{thebibliography}

\end{document}